\newtheorem{thm}{Theorem}[section]
\newtheorem{corollary}[thm]{Corollary}
\newtheorem{lemma}[thm]{Lemma}
\newtheorem{proposition}[thm]{Proposition}
\newtheorem{prop}[thm]{Proposition}
\newtheorem{thm-dfn}[thm]{Theorem-Definition}
\newtheorem{claim}[thm]{Claim}
\newtheorem{constructionlemma}[thm]{Construction-Lemma}
\theoremstyle{definition}
\newtheorem{definition}[thm]{Definition}
\newtheorem{remark}[thm]{Remark}
\newtheorem{example}[thm]{Example}
\numberwithin{equation}{section}
\newcommand{\fg}{{\mathfrak g}}
\newcommand{\ft}{{\mathfrak t}}
\newcommand{\fl}{{\mathfrak l}}
\newcommand{\fb}{{\mathfrak b}}
\newcommand{\fu}{{\mathfrak u}}
\newcommand{\fp}{{\mathfrak p}}
\newcommand{\fa}{{\mathfrak a}}
\newcommand{\fk}{{\mathfrak k}}
\newcommand{\fm}{{\mathfrak m}}
\newcommand{\fn}{{\mathfrak n}}
\newcommand{\rW}{{\mathrm W}}
\newcommand{\bA}{{\mathbb A}}
\newcommand{\bC}{{\mathbb C}}
\newcommand{\bX}{{\mathbb X}}
\newcommand{\bG}{{\mathbb G}}
\newcommand{\bD}{{\mathbb D}}
\newcommand{\bP}{{\mathbb P}}
\newcommand{\mY}{\mathcal{Y}}
\newcommand{\mX}{\mathcal{X}}
\newcommand{\mE}{\mathcal{E}}
\newcommand{\mF}{\mathcal{F}}
\newcommand{\mA}{\mathcal{A}}
\newcommand{\mM}{\mathcal{M}}
\newcommand{\mO}{\mathcal{O}}
\newcommand{\mP}{\mathcal{P}}
\newcommand{\mG}{\mathcal{G}}
\newcommand{\mN}{\mathcal{N}}
\newcommand{\mB}{\mathcal{B}}
\newcommand{\on}{\operatorname}
\newcommand{\tX}{\widetilde X}
\newcommand{\tY}{\widetilde Y}
\newcommand{\tG}{\widetilde G}
\newcommand{\tK}{\widetilde K}
\newcommand{\ra}{\rightarrow}
\newcommand{\la}{\leftarrow}
\newcommand{\bs}{\backslash}
\newcommand{\is}{\simeq}
\newcommand{\Av}{\on{Av}}
\newcommand{\av}{\on{av}}
\newcommand{\Loc}{\on{LocSys}}
\newcommand{\quash}[1]{}  
\newcommand{\nc}{\newcommand}
\newcommand{\frakg}{{\mathfrak g}}
\newcommand{\frakn}{{\mathfrak n}}
\newcommand{\fraks}{{\mathfrak s}}
\newcommand{\bbA}{{\mathbb A}}
\newcommand{\bbC}{{\mathbb C}}
\newcommand{\bbD}{{\mathbb D}}
\newcommand{\bbG}{{\mathbb G}}
\newcommand{\calF}{{\mathcal F}}
\newcommand{\calG}{{\mathcal G}}
\newcommand{\calH}{{\mathcal H}}
\newcommand{\calM}{{\mathcal M}}
\newcommand{\calO}{{\mathcal O}}
\newcommand{\beq}{\begin{equation}}
\newcommand{\eeq}{\end{equation}}
\newcommand{\oblv}{\on{oblv}}
\nc{\al}{{\alpha}} \nc{\be}{{\beta}} \nc{\ga}{{\gamma}}
\nc{\ve}{{\varepsilon}} \nc{\Ga}{{\Gamma}} 
\nc{\La}{{\Lambda}}
\nc{\ad }{{\on{ad }}}
\nc{\aff}{{\on{aff}}} \nc{\Aff}{{\mathbf{Aff}}}
\nc{\der}{{\on{der}}}
\nc{\diag}{{\on{diag}}}
\nc{\Fl}{{\calF\ell}}
\nc{\Hg}{{\on{Higgs}}}
\newcommand{\Hom}{{\on{Hom}}}
\nc{\Id}{{\on{Id}}}
\newcommand{\ind}{{\on{ind}}}
\nc{\Ind}{{\on{Ind}}}
\newcommand{\Lie}{{\on{Lie}}}
\nc{\Op}{{\on{Op}}}
\nc{\res}{{\on{res}}}
\nc{\tr}{{\on{tr}}}
\nc{\GSp}{{\on{GSp}}} \nc{\GU}{{\on{GU}}} \nc{\SL}{{\on{SL}}}
\nc{\SU}{{\on{SU}}} \nc{\SO}{{\on{SO}}}
\nc{\nh}{{\Loc_{J^p}(\tau')}}
\nc{\bnh}{{\Loc_{\breve J^p}(\tau')}}
\nc{\bU}{{\overline{U}}} \nc{\IC}{{\on{IC}}}
\newcommand{\UHom}{\underline{\Hom}}
\newcommand{\mtB}{\widetilde{\mB}}
\newcommand{\wt}{\widetilde}
\newcommand{\beqn}{\begin{equation*}}
\newcommand{\eeqn}{\end{equation*}}
\newcommand{\niceG}{\mathcal G}
\newcommand{\niceF}{\mathcal F}
\begin{document}
\title{A formula for the geometric Jacquet functor and its character sheaf analogue}
        \author{Tsao-Hsien Chen and Alexander Yom Din}
        \address{Department of Mathematics, University of Chicago, Chicago, IL 60637, USA.}
        \email{chenth@math.uchicago.edu}
        \address{Department of Mathematics, California Institute of Technology, Pasadena, CA 91125, USA.}
         \email{ayomdin@caltech.edu}

\thanks{}
\thanks{}

\maketitle
\begin{abstract}

Let $(G,K)$ be a symmetric pair over the complex numbers, and let $X=K \backslash G$ be the corresponding symmetric space. In this paper we study a nearby cycles functor associated to a degeneration of $X$ to $MN \backslash G$, which we call the ``wonderful degeneration". We show that on the category of character sheaves on $X$, this functor is isomorphic to a composition of two averaging functors (a parallel result, on the level of functions in the $p$-adic setting, was obtained in \cite{BK, SV}). As an application, we obtain a formula for the geometric Jacquet functor of \cite{ENV} and use this formula to give a geometric proof of the celebrated Casselman's submodule theorem and establish a second adjointness theorem for Harish-Chandra modules.
\end{abstract}

\setcounter{tocdepth}{1} \tableofcontents


\section{Introduction}
\subsection{Some notation}
Let $G$ be a (connected) reductive algebraic group. Let $\theta:G\ra G$ be an involution, and $K$ an open subgroup of $G^{\theta} = \{g\in G \ | \ \theta(g)=g\}$. We call the pair $(G,K)$ a symmetric pair and write $X := K\backslash G$ for the associated symmetric space.
Let $P$ be a minimal $\theta$-split parabolic subgroup of $G$, and $P=MAN$ its Langlands decomposition (see \S\ref{symmetric spaces} for the precise definitions).

For a morphism $g : \tY \to \bP^1$, write $Y_t := g^{-1}(t)$ and $Y^{\circ} := g^{-1} (\bG_m)$. Denote by $D(\cdot)$ the bounded derived category of constructible sheaves/holonomic $D$-modules/$\ell$-adic sheaves on $\cdot$ (see \S\ref{Sec:NotationGeneral}).

\subsection{Main result}

In this paper we consider a ``degeneration" of $X$ to the base affine space $MN\backslash G$; That is, a smooth variety $\tX$, equipped with a $\bG_m$-action, and a smooth map \[f:\tX\ra\bP^1\] intertwining the $\bG_m$-action on $\tX$ with the standard one on $\bP^1$, such that $X_1 \is X$, $X_0 \is MN\backslash G$ and $X_{\infty} \is M\bar N \backslash G$. Note that we have a canonical $\bG_m$-equivariant identification $X^{\circ} \cong \bG_m \times X_1$. We call this degeneration the \textbf{wonderful degeneration} of $X$.

We explain briefly the construction of $\tX$ (for details see \S\ref{Sec:LimitSchemes}).
Let $\gamma:\bG_m\ra A$ be a co-character of $A$ which is negative on roots of $N$. One can associate to it a
subgroup scheme $\tK\ra \bP^1$ of the constant group scheme $\tG:=\bP^1\times G \to \bP^1$, such that $K_{t} = \gamma(t) K \gamma(t)^{-1}$ for $t \in \bG_m$, $K_{0} = MN$ and $K_{\infty} = M \bar N$. Then the wonderful degeneration of the symmetric space is defined as the quotient $f: \tX := \tK \backslash \tG \ra \bP^1$.

The main object of this paper is the following \textbf{limit functor} associated to the wonderful degeneration:
\beq
L_{\gamma} := \psi_f \circ a_f^*:D(X_1) \ra D(X_0).
\eeq
Here $a_f:X^{\circ} \cong \bG_m \times X_1 \ra X_1$ is the projection map and $\psi_f$ is the nearby cycles functor associated to the map $f:\tX \ra \bP^1$. The limit functor $L_{\gamma}$ is $t$-exact w.r.t. the perverse $t$-structure.

To state our main result, we need to recall the notion of averaging functors (see \S\ref{averaging functor}).
By \cite{BL}, we have the following $!$-averaging and $*$-averaging functors:

 \[\on{Av}_{M\bar N}^{K}:D(X_1)\is D(K \bs G) \is D_K(G)\ra D_{M\bar N}(G) \is D(M \bar N \bs G) \is D(X_{\infty})\] and
\[\on{av}_{MN}^{M\bar N}:D(X_{\infty}) \is D(M \bar N \bs G) \is D_{M\bar N}(G)\ra D_{MN}(G) \is D(MN \bs G) \is D(X_0).\]
Recall that $\on{Av}_{M\bar N}^{K}$ (resp. $\on{av}_{MN}^{M\bar N}$) is, roughly, the composition of
$*$-pull back with
$!$-push forward (resp. $*$-push forward) along the correspondence $K\backslash G\la M \backslash G\ra M\bar N\backslash G$ (resp. $M\bar N\backslash G\la M \backslash G\ra MN\backslash G$).
For any algebraic subgroup $H\subset G$, 
the limit functor 
and the averaging functors above have natural upgrades to functors 
between equivariant categories 
$D(X_1 / H) := D_{K\times H}(G)$, $D(X_0 / H):=
D_{MN\times H}(G)$, $D (X_\infty / H ):=D_{M\bar N\times H}(G)$
and we use the same notation $L_\gamma$, $\on{Av}_{MN}^{K}$,
$\on{Av}_{M\bar N}^{K}$
for the corresponding functors.

Let $D\mathcal{CS}(X_1 / K)\subset D(X_1 / K)=D_{K\times K}(G)$ be the full subcategory of character
sheaves on $X_1$ introduced by Lusztig, Grojnowski, and Ginzburg (see \S\ref{Sec:AdmShv}).
Here is the main result of this paper:

\begin{thm}\label{main thm} There is a natural isomorphism of functors
$D\mathcal{CS}(X_1 / K)\ra D(X_0 / K)$
\[L_{\gamma} \is\on{av}_{MN}^{M\bar N}\circ\on{Av}_{M\bar N}^{K}.\]
\end{thm}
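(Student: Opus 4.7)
My plan is to prove the theorem in two stages: first construct a natural transformation between the two functors, and then verify it is an isomorphism on the character sheaf subcategory $D\mathcal{CS}(X_1/K)$. The composition $\on{av}_{MN}^{M\bar N} \circ \on{Av}_{M\bar N}^{K}$ is obtained as a pull-push along the correspondence chain $X_1 \leftarrow M \backslash G \to X_\infty \leftarrow M \backslash G \to X_0$ (using that $M$ is contained in each of $K$, $MN$, and $M\bar N$), while $L_\gamma$ is intrinsic to the wonderful degeneration $\tX$. To produce the natural transformation, I would realize both functors as integral transforms with kernels on $X_1 \times X_0$ and construct a morphism between the kernels. For $L_\gamma$, the kernel is the nearby cycles of the structure sheaf on the graph of the generic identification $X^\circ \cong \bG_m \times X_1$ inside $\tX \times X_0$; for the averaging composition, the kernel is an IC-type sheaf built from the double correspondence. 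The natural map would follow from base-change for nearby cycles combined with the observation that $M \backslash G$ maps naturally to each special fiber of $f$, so the boundary strata of $\tX$ receive a canonical comparison from the correspondence.

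For the isomorphism property on character sheaves, I would exploit the structure theory of Lusztig-Grojnowski-Ginzburg: character sheaves on $X_1$ are generated (via parabolic induction from Levi symmetric subpairs) by cuspidal ones. If one can verify that both functors are compatible with parabolic induction---for $L_\gamma$ by refining the analysis of the geometric Jacquet functor in \cite{ENV}, and for the averaging composition by standard base change through the intermediate fibrations---then the problem reduces to cuspidal character sheaves on smaller symmetric subpairs. For cuspidals, the cleanliness/cuspidality conditions yield strong vanishing on non-generic strata, so both sides can be computed directly on the single relevant stratum and compared.

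The main obstacle is controlling the difference between $L_\gamma$ and $\on{av}_{MN}^{M\bar N} \circ \on{Av}_{M\bar N}^{K}$ on general objects of $D(X_1/K)$: these functors genuinely disagree in general, so the character-sheaf hypothesis must be used decisively. The hardest technical point will be the compatibility of the nearby cycles $\psi_f$ with the averaging composition on $D\mathcal{CS}$---this bridges two quite different geometric constructions (a degeneration versus a correspondence), and establishing an isomorphism likely requires combining geometric input from the boundary structure of $\tX$ (perhaps via Braden-style contraction arguments, exploiting that $\gamma$ contracts to $MN$ at $0$ and $M \bar N$ at $\infty$) with representation-theoretic input from the cuspidality/regularity conditions defining character sheaves. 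A secondary issue is ensuring the natural transformation is canonical, so that the resulting isomorphism is intrinsic and functorial in $\mathcal F \in D\mathcal{CS}(X_1/K)$ rather than depending on choices made in identifying integral kernels.
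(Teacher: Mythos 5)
Your plan diverges from the paper's proof in a way that leaves genuine gaps. The most serious one is the second stage: you propose to reduce to cuspidal character sheaves via parabolic induction from Levi symmetric subpairs. But in this paper character sheaves on $X$ are \emph{defined} as the subcategory of $D_{K\times K}(G)$ generated (under perverse constituents and direct summands) by the image of the horocycle transform $CH=q_!p^*$ applied to $\mF\in D_{K\times K}(\mB^2)$; there is no cuspidal/induction structure theory for symmetric spaces at the level of generality needed here, and your plan would require developing one, plus proving compatibility of $L_\gamma$ with parabolic induction and a separate direct computation on cuspidals --- each of these is a substantial unproved step. The correct way to use the character-sheaf hypothesis is much more direct: since $q$ is proper and $p$ is smooth, $CH$ commutes with $L_\gamma$ and with averaging, so the whole comparison is transported from $G$ (which is not proper, and where the limit functor is hard to control) to $\mB\times\mB$ (which is proper). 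That transport, not a cuspidal reduction, is the decisive use of the hypothesis.

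The second gap is that your proposal never engages with the actual engine of the proof: a \emph{transversal fully faithfulness} property of the limit functor. On a proper limit scheme acted on by two transversal group schemes, the map $\Hom(\calF,\calG)\to\Hom(L_f\calF,L_f\calG)$ is an isomorphism for $\calF,\calG$ weakly monodromic under the respective groups; this is proved by showing $\psi_f$ is strictly monoidal on such pairs (transversality makes the relevant action map smooth) together with properness. Combining this with the adjunction $(\Av^{K}_{M\bar N},\av)$, the fact that $L_\gamma$ is the identity on weakly monodromic objects, and Yoneda, one gets $\Av^{MN}_{M\bar N}\circ L_\gamma\cong\Av^{K}_{M\bar N}$ on $D\mathcal{CS}$; one then inverts $\Av^{MN}_{M\bar N}$ on the unipotently $A$-monodromic subcategory using the long intertwining equivalence $D_{MN}(G)_{um}\simeq D_{M\bar N}(G)_{um}$. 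Your stage one (a morphism of integral kernels on $X_1\times X_0$ obtained from nearby cycles of the graph) is not obviously canonical and is never needed: the isomorphism of functors falls out of a chain of $\Hom$-isomorphisms rather than a map of kernels. Your instinct about Braden-type contraction is correct but misplaced --- it enters only in proving that the averaging adjunction between $D_{MN}(\mB)$ and $D_{M\bar N}(\mB)$ is an equivalence, not in comparing $\psi_f$ with the correspondence.
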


Two corollaries of theorem \ref{main thm} are the following:

\begin{corollary}\label{exactness for av}
The functor $\on{av}_{MN}^{M\bar N}\circ\on{Av}_{M\bar N}^{K} : D\mathcal{CS}(X_1 / K) \to D(X_0 / K)$ 
is $t$-exact (w.r.t. the perverse $t$-structure).
\end{corollary}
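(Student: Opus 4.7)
The plan is essentially to invoke Theorem \ref{main thm} directly: since that theorem gives a natural isomorphism of functors $L_\gamma \is \on{av}_{MN}^{M\bar N}\circ\on{Av}_{M\bar N}^{K}$ on the subcategory $D\mathcal{CS}(X_1/K)$, the corollary reduces to the $t$-exactness of $L_\gamma$, which has already been asserted in the preceding discussion of the main result.

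To justify the $t$-exactness of $L_\gamma = \psi_f \circ a_f^*$, I would spell out two standard ingredients. First, the map $a_f : X^\circ \cong \bG_m \times X_1 \to X_1$ is smooth of relative dimension $1$, so the (shifted) pullback $a_f^*[1]$ is $t$-exact for the perverse $t$-structure. Second, the nearby cycles functor $\psi_f$ associated with the smooth map $f : \tX \to \bP^1$, taken with the conventional cohomological shift by $[-1]$ built into its definition, is $t$-exact; this is Gabber's theorem. Composing, $L_\gamma$ sends perverse sheaves to perverse sheaves, as stated in the paper. Transporting this across the isomorphism of Theorem \ref{main thm} then yields $t$-exactness of $\on{av}_{MN}^{M\bar N}\circ\on{Av}_{M\bar N}^{K}$ on $D\mathcal{CS}(X_1/K)$.

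There is no real obstacle here, as the whole point of Theorem \ref{main thm} is precisely to express the (a priori mysterious) composition of averaging functors as a nearby cycles construction, for which $t$-exactness is built in from the outset. What is noteworthy, and worth emphasising in the write-up, is that the corollary genuinely requires restricting to character sheaves: neither $\on{Av}_{M\bar N}^{K}$ nor $\on{av}_{MN}^{M\bar N}$ is $t$-exact on the full category $D(X_1/K)$, so the statement is really a reflection of the content of Theorem \ref{main thm} rather than of any general $t$-exactness property of the averaging functors themselves.
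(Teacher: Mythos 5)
Your proposal is correct and is exactly the argument the paper intends: the corollary is stated as an immediate consequence of Theorem \ref{main thm} together with the $t$-exactness of $L_{\gamma}$ (recorded in Lemma \ref{LimFunBasicProp}(1), via the paper's convention that $\psi_f[-1]$ is $t$-exact and $a_f^*$ carries the compensating shift for the smooth projection). Your filling-in of the two standard ingredients and the closing remark about the necessity of restricting to character sheaves are both accurate.
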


\begin{corollary}\label{two adjoints}
The functor $L_{\gamma}$ admits both a left and a right adjoint.
\end{corollary}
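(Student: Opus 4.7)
The plan is to deduce the corollary formally from Theorem~\ref{main thm}. By that theorem, on $D\mathcal{CS}(X_1/K) \subset D(X_1/K)$ the functor $L_\gamma$ is canonically isomorphic to the composition $\on{av}_{MN}^{M\bar N} \circ \on{Av}_{M\bar N}^{K}$. Since admitting both a left and a right adjoint is preserved under composition---the adjoints of a composite being the composites of the individual adjoints in reverse order---it suffices to exhibit both adjoints for each of the two averaging functors; stringing them together in the opposite order will then supply the adjoints of $L_{\gamma}$.

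To construct these adjoints, I would unfold each averaging functor as a pull--push along a correspondence of smooth quotient stacks: $\on{Av}_{M\bar N}^{K} \cong p_! \circ q^*$, with $q\colon M\backslash G\to K\backslash G$ and $p\colon M\backslash G\to M\bar N\backslash G$, and $\on{av}_{MN}^{M\bar N} \cong r_* \circ s^*$, with $s\colon M\backslash G\to M\bar N\backslash G$ and $r\colon M\backslash G\to MN\backslash G$. All four morphisms are smooth; moreover $p$ and $r$ are torsors under the connected unipotent groups $\bar N$ and $N$. For the smooth pullbacks $q^*$ and $s^*$, the six-functor formalism immediately provides both a right adjoint ($q_*$ and $s_*$) and a left adjoint ($q_![-2\dim q](-\dim q)$ and $s_![-2\dim s](-\dim s)$), via the standard identification $f^! \cong f^*[2d](d)$ for smooth $f$ of relative dimension $d$. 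For $p_!$ and $r_*$, one adjoint is again immediate: $p^!$ is right adjoint to $p_!$, and $r^*$ is left adjoint to $r_*$. The two remaining adjoints are obtained by exploiting the unipotence of $\bar N$ and $N$: since $H^*(\bar N, \bC)$ and $H^*(N, \bC)$ are each concentrated in a single degree, $!$- and $*$-pushforward along these torsors are related by a canonical shift and twist, and this identification transports each existing adjoint to produce the missing one.

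The main obstacle, and the essential geometric input, is precisely this last point: verifying that $p_! \cong p_* [-2 \dim \bar N](-\dim \bar N)$ and $r_! \cong r_* [-2 \dim N](-\dim N)$ in a way that guarantees all candidate adjoints live honestly within the bounded constructible derived category rather than some unbounded enlargement. Once this is in place, both adjoints of each averaging functor are available, and composing them in reverse order produces the left and right adjoints of $L_{\gamma}$ on character sheaves, proving the corollary.
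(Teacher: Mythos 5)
There is a genuine gap. The strategy of producing \emph{both} adjoints for \emph{each} of the two averaging functors hinges on your claim that $p_!\cong p_*[-2\dim \bar N](-\dim\bar N)$ (and likewise for $r$) as functors on the whole derived category of the intermediate quotient. This is false for a non-proper map, even for a torsor under a connected unipotent group: already for $p:\bbA^1\to pt$ (a $\bG_a$-torsor) and $\calF=j_!C_{\bG_m}$ one has $p_!\calF\cong C[-1]\oplus C[-2]$ while $p_*\calF=0$. The shift relation between $!$- and $*$-pushforward along an affine fibration holds only on objects pulled back from the base (more generally, objects locally constant along the fibers), by the projection formula. In the situation at hand the inputs to $\on{Av}_{M\bar N}^{K}$ are $K$-equivariant, and $K$-orbits are \emph{transversal} to the $\bar N$-orbit fibers of $p$, not contained in them; so the relevant objects are precisely not of this form, and the identification you need does not apply. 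Consequently neither the left adjoint of $\on{Av}_{M\bar N}^{K}$ nor the right adjoint of $\on{av}_{MN}^{M\bar N}$ is produced by this argument, and the composition step has nothing to compose.

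The paper's route around this is different and uses two nontrivial inputs. First, since $L_{\gamma}$ commutes with Verdier duality and $\bbD$ interchanges $\on{Av}$ and $\on{av}$, the formula of theorem \ref{main thm} yields a second expression $L_{\gamma}\cong \on{Av}_{MN}^{M\bar N}\circ \on{av}_{M\bar N}^{K}$ alongside $L_{\gamma}\cong \on{av}_{MN}^{M\bar N}\circ \on{Av}_{M\bar N}^{K}$. Second, by the Matsuki-type equivalence (theorem \ref{Thm:AvIsEqui}, lemma \ref{long intertwining}, proved via Braden's hyperbolic localization), the ``long intertwining'' functors $\on{av}_{MN}^{M\bar N}$ and $\on{Av}_{MN}^{M\bar N}$ are \emph{equivalences} on the relevant $A$-monodromic subcategories (where the image of $L_{\gamma}$ on character sheaves lives, by lemma \ref{A mon}). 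Then the first expression exhibits $L_{\gamma}$ as (equivalence)$\circ$(functor with a right adjoint), giving a right adjoint, and the second as (equivalence)$\circ$(functor with a left adjoint), giving a left adjoint. If you want to repair your proof, you must replace the false shift identity with these two ingredients; the ``one adjoint each'' that comes for free from the six-functor formalism is not enough.
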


In the group case, that is for $G= H \times H$  where $H$ is a reductive group and $K= \Delta H \subset G$ is the diagonal copy of $H$, theorem
\ref{main thm} and its corollary \ref{exactness for av} are ones of the main results in \cite{BFO}, which is a key step in their work on classification of character D-modules.
We would like to emphasize that our argument here is different from the one in \cite{BFO}; it is geometric and can be applicable also to the $\ell$-adic and D-module settings.

\remark{
The formula in theorem \ref{main thm} resembles formulas in \cite{BK,SV}.
More precisely, in \emph{loc. cit.} the authors prove an equality of two maps between spaces of distributions on $p$-adic manifolds, which can be heuristically compared to the formula of theorem \ref{main thm} - taking into consideration the standard analogy between functional spaces and categories of sheaves. Indeed, one of the maps in \emph{loc. cit.} is a composition of two averaging operators, comparable to $\on{av}_{MN}^{M\bar N}\circ\on{Av}_{M\bar N}^{K}$. The other map is the \textbf{Bernstein map} (see \cite[\S 7]{BK} and \cite[\S 11]{SV}). This map is analogous to $L_{\gamma}$. Indeed, in \emph{loc. cit.} it is presented as some specialization to orbits at infinity of the wonderful compactification.
}

\subsection{
Second adjointness and Casselman's submodule theorem
}
Let $B$ be a Borel subgroup of $G$. 
\quash{The limit functor $L_{\gamma}$ has a natural upgrade to a functor \[L_{\gamma} : D_K(G/B)\ra D_{MN}(G/B)\]}
The limit functor can be interpreted as: $$ L_{\gamma} : D_K (G/B) \is D(X_1 / B) \to D (X_0 / B) \is D_{MN} (G/B).$$ In this form, it is the geometric Jacquet functor introduced by Emerton, Nadler and Vilonen in \cite{ENV} (in fact, they do not elaborate on the equivariancy that objects in the image of this functor enjoy, but see \cite{AM1,AM2}). Similarly to theorem \ref{main thm} above, we obtain a formula expressing the geometric Jacquet functor as a composition of two averaging functors 
(in fact, we prove it in a more general situation called 
the Matsuki setting, see theorem \ref{Thm:FormulaMatsuki}). Indeed, we actually use the result for $G/B$ in order to deduce the result for $G/K$.

As an application, we obtain (here, by $\calM_0 (\frakg , H)$ we denote the category of Harish-Chandra $(\frakg , H)$-modules with trivial infinitesimal character, and $J$ is the Casselman-Jacquet functor from \cite{ENV}, interpreted between the ``correct" equivariant categories):

\begin{corollary}(see theorem \ref{Second adjoint})\label{second adj}
The Casselman-Jacquet functor $$J : \calM_0 (\frakg , K) \to \calM_0 (\frakg , MN)$$ admits both a left and a right adjoint.
\end{corollary}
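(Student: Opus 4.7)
The plan is to reduce, via Beilinson–Bernstein localization, to a statement about the geometric Jacquet functor $L_\gamma$ on equivariant $D$-modules on $G/B$, to use the averaging decomposition from theorem \ref{Thm:FormulaMatsuki} to produce adjoints at the derived level, and then to descend to the hearts of the relevant $t$-structures.

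First I would invoke Beilinson–Bernstein localization at the trivial infinitesimal character, which identifies $\calM_0(\frakg, H)$ with the heart of the perverse $t$-structure on the appropriate equivariant category of twisted $D$-modules on $G/B$, for $H = K$ and $H = MN$. Under this identification the Casselman–Jacquet functor $J$ corresponds to the geometric Jacquet functor of \cite{ENV}, which in our notation is the limit functor $L_\gamma : D_K(G/B) \to D_{MN}(G/B)$, interpreted with the correct $B$-equivariance conventions as in \cite{AM1, AM2}. It therefore suffices to produce a left and a right adjoint for $L_\gamma$ at the derived level and restrict them to the hearts.

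Next, applying theorem \ref{Thm:FormulaMatsuki}, the Matsuki-setting analog of theorem \ref{main thm}, I have the factorization
\[ L_\gamma \simeq \on{av}_{MN}^{M\bar N} \circ \on{Av}_{M\bar N}^K \]
as functors between the appropriate equivariant derived categories on $G/B$. Each factor is, up to a cohomological shift, the pull-push along a correspondence of the form $H_1\backslash G/B \leftarrow M\backslash G/B \to H_2\backslash G/B$ whose morphisms are quotients by affine algebraic groups; the standard adjunctions $(f_!, f^!)$ and $(f^*, f_*)$ for such morphisms supply each factor with both a left and a right adjoint, each realized, up to shifts, by an averaging functor going in the opposite direction. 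Composing, $L_\gamma$ inherits a left and a right adjoint at the derived level (this is essentially corollary \ref{two adjoints} transported to the $G/B$ setting).

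To descend these adjoints to the abelian level I would use that $L_\gamma$ is $t$-exact: if $\Phi$ is a derived right (respectively left) adjoint of $L_\gamma$, then $\Phi$ is left (respectively right) $t$-exact, and $H^0 \Phi$ restricted to the heart gives the desired adjoint to $J$ via the Beilinson–Bernstein equivalence. The principal obstacle I anticipate is matching equivariance conventions: the adjoints of the averaging functors a priori live on larger equivariant categories, and one must verify that they descend to the precise subcategories picked out by localization and so represent honest adjoints of $J$ on Harish-Chandra modules. Keeping track of the $B$-equivariance throughout, as emphasized in the paper's discussion of the identification $D_K(G/B) \simeq D(X_1/B)$, is where I expect the bulk of the technical work to sit.
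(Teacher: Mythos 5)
Your overall architecture (localize via \cite[theorem 1.1]{ENV} and Riemann--Hilbert, factor $L_\gamma$ through averaging functors via theorem \ref{Thm:FormulaMatsuki}, then descend to the hearts using $t$-exactness) is the same as the paper's, and the first and last steps are fine. The gap is in the middle step, where you assert that each averaging factor carries \emph{both} a left and a right adjoint ``by the standard adjunctions $(f_!,f^!)$ and $(f^*,f_*)$.'' This is not true. The functor $\on{Av}^{K}_{M\bar N}$ is of the form $q_!p^*$ for a correspondence whose legs are quotients by unipotent groups; these legs are smooth but \emph{not proper} (the fibers are affine spaces), so while $q_!p^*$ has the right adjoint $p_*q^!=\on{av}^{M\bar N}_{K}$, the functor $q_!$ has no left adjoint in the constructible derived category, and hence $\on{Av}^{K}_{M\bar N}$ has no left adjoint for free. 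Dually, $\on{av}^{M\bar N}_{MN}$ has a formal left adjoint $\on{Av}^{MN}_{M\bar N}$ but no formal right adjoint. So from the single factorization $L_\gamma\cong\on{av}^{M\bar N}_{MN}\circ\on{Av}^{K}_{M\bar N}$ you cannot extract either adjoint of the composite by purely formal means: to get a right adjoint of $F\circ G$ you need right adjoints of both $F$ and $G$, and symmetrically for left adjoints.

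The paper closes this gap with two non-formal inputs that your proposal omits. First, theorem \ref{Thm:AvIsEqui} (the Matsuki equivalence, proved via Braden's hyperbolic localization) shows that $\on{av}^{M\bar N}_{MN}$ and $\on{Av}^{MN}_{M\bar N}$ are mutually inverse equivalences between $D_{MN}(\mB)$ and $D_{M\bar N}(\mB)$; an equivalence has both adjoints, so the factorization above already yields the \emph{right} adjoint $\on{av}^{M\bar N}_{K}\circ(\on{av}^{M\bar N}_{MN})^{-1}$. Second, the commutation of $L_\gamma$ with Verdier duality (lemma \ref{LimFunBasicProp}) converts the formula into the dual factorization $L_\gamma\cong\on{Av}^{M\bar N}_{MN}\circ\on{av}^{K}_{M\bar N}$, in which the inner factor $\on{av}^{K}_{M\bar N}$ does have a left adjoint, namely $\on{Av}^{M\bar N}_{K}$; this produces the \emph{left} adjoint. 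You should replace your appeal to ``standard adjunctions for each factor'' by this duality-plus-equivalence argument (this is exactly the content of corollary \ref{adjoint}); with that repair, the remainder of your proof goes through as written.
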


It is interesting to emphasize the resemblence of this corollary to the ``second adjointness" for the Jacquet functor from $p$-adic representation theory.

As another application of our formula we obtain a geometric proof of
\textbf{Casselman's submodule theorem}, based on a conservativity property of averaging (see \S\ref{Sec:CassSubmodThm})\footnote{The first geometric proof of Casselman's submodule theorem was given by Beilinson-Bernstein \cite{BB1}.}.

\quash{
\subsection{Future and related research}
Here we briefly indicate some of the possible directions of future research:
\begin{itemize}
\item In \cite{BFO}, the authors use the exactness property in corollary \ref{exactness for av} (in the group case)
together with other results to obtain a classification of character $\mathcal D$-modules on a complex
reductive group. One can try to combine their techniques with those of the present paper to obtain
a classification of character sheaves on symmetric spaces.

\item It would be very interesting to investigate the ``global" analogue of the
wonderful degeneration and study the corresponding nearby cycles functor.
For exmaple,
D. Gaitsgory and D. Nadler (\cite{GD}) use a version of the degeneration
of the symmetric space (in fact, they consider more general spherical varieties)
in the global setting to construct Langlands dual groups of symmetric spaces.
We expect that the technique in the present paper can be applied to the
global setting and we should obtain a global analogue of
our formula in theorem \ref{main thm} (see \cite{SS} where a similar question is studied in detail).
\end{itemize}}

\subsection{}
Here is a further outline of the contents and arguments in the paper.
\medskip

In section $2$, we introduce conventions, notations and recollections.

\medskip

In section $3$, we introduce the notion of a \textbf{limit scheme} and
define the main object of this paper, the \textbf{wonderful degeneration} of a symmetric space.
\medskip

In section $4$, we introduce and study the \textbf{limit functor} associated to a limit scheme.
We establish some basic functorial properties of limit functors.
We establish a key technical result of this paper, the \textbf{transversal fully faithfulness property} of
limit functors (see theorem \ref{TransRigid}), which says, roughly, that the limit functor is fully faithful on a pair of objects which are transversal. The results in this section are inspired by work of D. Nadler in \cite{N}.
\medskip

In section $5$, we introduce an algebraic version of \textbf{Matsuki datum} and the Matsuki correspondence from \cite{MUV}, as well as the resulting limit functor. We apply the transversal fully faithfulness property of section $4$ to prove a formula, which expresses the limit functor as a composition of an averaging functor with the Matsuki correspondence functor and use it to show that the 
limit functor admits both a left and a right adjoint  
(see theorem \ref{Thm:FormulaMatsuki} and corollary \ref{adjoint}).

\medskip

In section $6$, we specialize the setting of section $5$ to the case of the flag variety, and obtain a formula for the geometric Jacquet functor (see theorem \ref{FormulaForJC}). Using this formula, we 
establish a second adjointness theorem for Harish-Chandra modules 
(see theorem \ref{Second adjoint}) and reprove Casselman's submodule theorem, using a faithfullness property of averaging (see theorem \ref{CasselmanSubmoduleThm}).

\medskip

In section $7$, we recall the definition of character sheaves on symmetric spaces and we deduce our
main result, theorem \ref{main thm} (which is theorem \ref{formula for L} there), from
the results of sections $4$ and $6$.

\medskip

In appendix A, we prove that the wonderful degeneration $\tX$ introduced in section $3$ is
quasi-affine. This follows \cite[Appendix C]{DG}, and extends their result from the group case to symmetric spaces.
\medskip

\subsection{Acknowledgements}

A.Y.D. would like to thank his PhD advisor Joseph Bernstein, for suggesting to study the Casselman-Jacquet functor algebraically. Both authors would like to thank D. Gaitsgory for very helpful comments. Both authors would like to thank the Hausdorff Research Institute for Mathematics and Max Planck Institute for Mathematics, for excellent hosting and working conditions in the summer of 2014, during which the cooperation began. T.H.C. was partially supported by an AMS-Simons Travel Grant. A.Y.D. was partially supported by ERC grant 291612 and by ISF grant 533/14.


\section{Conventions, background and reminders} \label{Sec:Notation}

\subsection{General conventions} \label{Sec:NotationGeneral}

We fix an algebraically closed field $k$, and by scheme we mean a scheme of finite type over that field. By a variety we mean a separated scheme, and by a group we mean a smooth affine algebraic group. By $pt$ we denote the point. By $D(X)$ we denote one of the following:

\begin{enumerate}
	\item If $k = \bbC$, we can take $D(X)$ to be the derived category of constructible sheaves.
	\item If $k$ is of characteristic $0$, we can take $D(X)$ to be the derived category of holonomic $D$-modules.
	\item For $\ell \neq char(k)$, we can take $D(X)$ to be the derived category of constructible $\ell$-adic sheaves.
\end{enumerate}

We usually consider $D(X)$ as enriched over $D(pt)$, so that we automatically consider $Hom_{D(X)}(\cdot,\cdot) \in D(pt)$. By $C = C_X \in D(X)$ we denote the constant sheaf (i.e. $\pi^* C_{pt}$, where $\pi : X \to pt$), and by $\omega = \omega_X \in D(X)$ we denote the dualizing sheaf (i.e. $\pi^! C_{pt}$).

By $P(X) \subset D(X)$ we denote the full subcategory of perverse objects. We denote Verdier duality by $\bbD = \bbD_X : D(X) \to D(X)^{op}$.

\subsection{$\bbA^1$-schemes and nearby cycles}

For an $\bbA^1$-scheme $f : \wt{X} \to \bbA^1$, we denote by $X_t$ the fiber of $\widetilde{X}$ over $t \in \bbA^1$, and by $X^{\circ}$ the open subscheme $f^{-1}(\bG_m)$. For a morphism of $\bbA^1$-schemes $\phi : \wt{X} \to \wt{Y}$, we abuse notation and denote by $\pi$ also the corresponding base-changed morphisms $X_t \to Y_t$ and $X^{\circ} \to Y^{\circ}$.

We denote by $\psi_f : D(X^{\circ}) \to D(X_0)$ the (unshifted) functor of nearby cycles, so that $\psi_f [-1]$ is $t$-exact (w.r.t. the perverse $t$-structure).

\subsection{Symmetric spaces}\label{symmetric spaces}

When dealing with symmetric pairs in what follows, we will assume the following fixed data and notations.

Fix a connected reductive group $G$, and an (algebraic) involution $\theta:G \ra G$. Write as usual $G^{\theta}=\{g\in G \ | \ \theta(g)=g\}$. Fix an open subgroup $K$ of $G^{\theta}$. The pair $(G,K)$ is called a \emph{symmetric pair}, and $X := K \backslash G$ is the associated \emph{symmetric space}.

A torus $S \subset G$ is called $\theta$-\emph{split} if $\theta(s)=s^{-1}$ for all $s\in S$.
A parabolic subgroup $P \subset G$ is called $\theta$-\emph{split} if $L := P \cap \theta(P)$ is a Levi subgroup of $P$ (and of $\theta(P)$). Taking $A$ to be the maximal $\theta$-split torus in $Z(L)$, one has $L = Z_G (A)$.

Fix a minimal $\theta$-split parabolic $P \subset G$, and thus the corresponding $L := P \cap \theta(P)$ and $A$ - the maximal $\theta$-split torus in $Z(L)$. Note that $A$ is a maximal $\theta$-split torus in $G$, since $P$ is minimal.

Denote $M := L \cap K = Z_K(A)$, and denote $N := R_u (P)$. One has the decomposition $L = MA$, and thus the "Langlands decomposition" $P=MAN$. We also denote $\bar N = \theta(N)$ etc.

Fix also a co-character $\gamma : \bG_m \to A$, which we suppose to have negative pairing with roots of $N$.

Finally, we denote by $\mB$ the flag variety of $G$.

\quash{
We denote by $\fg=\Lie G$, $\fk=\Lie K$,
$\fp=\Lie P$, $\fn=\Lie N$, etc.
One has Cartan decompositions $\fg=\fk+\fraks$.
We fixed a $(G,\theta)$-invariant non-degenrate bilinear form $\langle,\rangle$ on $\fg$.

Let $T$ be a maximal torus of $G$ and
$B\supset T$ be a Borel subgroup. We denote by $U\subset B$
the unipotent radical of $B$.
Let $\mB=G/B$ denote the flag variety of $G$ and
$\mtB=G/U$ be the base affine variety.
We denote by $\bar U$ the unipotent radical
for the opposite Borel $\bar B$.

The following properties of orbits on $\mB$ are well-known:

\begin{lemma}\label{tansversal}

\begin{enumerate}
\item $MN$-orbits and $M\bar N$-orbits on $\mB$ are transversal.
\item $K$-orbits and $MN$-orbits (resp. $M\bar N$-orbits) on $\mB$ are transversal.
\end{enumerate}
\end{lemma}
}

\subsection{$G$-varieties and equivariant derived categories} \label{Sec:NotationGroups}

Let $X$ be a $G$-variety. As in \cite{BL}, one can define the equivariant derived category $D_G(X)$ as a 2-limit of categories $D(G \backslash Y)$, where $Y \to X$ runs over free $G$-resolutions of $X$. In fact, we can restrict ourselves to smooth resolutions, and smooth morphisms between them (cf. \cite[\S4]{BL}). If $K \subset G$ is a subgroup, then $D_K(X)$ can be realized as a 2-limit of categories $D(K \backslash Y)$, where $Y \to X$ runs over smooth free $G$-resolutions of $X$, and smooth morphisms between them.

Suppose that $H \subset G$ is a subgroup. Then the forgetful functor $\oblv^G_H : D_G(X) \to D_H(X)$ is conservative. In addition, it is fully faithful provided that $G / H$ is (connected) unipotent.

\subsection{Averaging functors}\label{averaging functor}

Let $X$ be a $G$-variety, and $H \subset G$ a subgroup. The forgetful functor $\oblv^G_H: D_G(X) \ra D_H(X)$ admits a left adjoint $\Ind_{G}^H:D_H(X) \ra D_G(X)$ and a right adjoint $\ind_{G}^H:D_H(X) \ra D_G(X)$.

Let $H_1,H_2 \subset G$ be two subgroups of $G$. We define the following averaging functors
\beq
\on{Av}^{H_1}_{H_2}:=\Ind_{H_2}^{H_1\cap H_2}\circ\oblv^{H_1}_{H_1\cap H_2}:
D_{H_1}(X)\ra D_{H_2}(X).
\eeq
\beq
\on{av}^{H_1}_{H_2}:=\ind_{H_2}^{H_1\cap H_2}\circ\oblv^{H_1}_{H_1\cap H_2}:
D_{H_1}(X)\ra D_{H_2}(X).
\eeq

\begin{lemma}\label{Av and oblv}
$\Av^{H_1}_{H_2}$ is left adjoint to $\av^{H_2}_{H_1}$.

\quash{
Assume $H_2=H_1\ltimes L$. Then we have a natural isomorphism of functors
\[\oblv_{L}^{H_2} \circ \Ind^{H_1}_{H_2} \is \Ind_{L}^{\{ e \} } \circ \oblv_{\{ e \} }^{H_1}.\]}
\end{lemma}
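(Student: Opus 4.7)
The plan is to prove the adjunction by stringing together two standard adjunctions that are recalled just before the lemma. First I would fix objects $F \in D_{H_1}(X)$ and $G \in D_{H_2}(X)$ and unfold the definitions, writing
\[
\Av^{H_1}_{H_2}(F) = \Ind_{H_2}^{H_1 \cap H_2}\bigl(\oblv^{H_1}_{H_1 \cap H_2}(F)\bigr), \qquad \av^{H_2}_{H_1}(G) = \ind_{H_1}^{H_1 \cap H_2}\bigl(\oblv^{H_2}_{H_1 \cap H_2}(G)\bigr).
\]
Thus the target is to produce a natural isomorphism
\[
\Hom_{D_{H_2}(X)}\!\bigl(\Ind_{H_2}^{H_1 \cap H_2}\oblv^{H_1}_{H_1 \cap H_2}(F),\, G\bigr) \;\is\; \Hom_{D_{H_1}(X)}\!\bigl(F,\, \ind_{H_1}^{H_1 \cap H_2}\oblv^{H_2}_{H_1 \cap H_2}(G)\bigr).
\]

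Next I would invoke the two adjunctions recalled in \S\ref{averaging functor}: $\Ind^{H_1 \cap H_2}_{H_2} \dashv \oblv^{H_2}_{H_1 \cap H_2}$ on the outside, which lets me rewrite the left-hand side as $\Hom_{D_{H_1 \cap H_2}(X)}(\oblv^{H_1}_{H_1 \cap H_2}(F), \oblv^{H_2}_{H_1 \cap H_2}(G))$; and then $\oblv^{H_1}_{H_1 \cap H_2} \dashv \ind^{H_1 \cap H_2}_{H_1}$, which transports this further to $\Hom_{D_{H_1}(X)}(F, \ind^{H_1 \cap H_2}_{H_1}\oblv^{H_2}_{H_1 \cap H_2}(G))$. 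Composing the two isomorphisms gives exactly what is wanted, and naturality in $F$ and $G$ is inherited from naturality of each of the intermediate adjunctions.

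There is essentially no serious obstacle, as the lemma is formal from the already-recalled adjoint pairs: the only point one should verify carefully is that the two forgetful functors appearing in the middle step are genuinely the same functor $\oblv^{H_1}_{H_1\cap H_2}$ resp.\ $\oblv^{H_2}_{H_1\cap H_2}$, so that the composition of adjunctions makes sense on the nose and not just up to a compatibility. This is immediate from the description of $D_H(X)$ as a 2-limit over free smooth resolutions in \S\ref{Sec:NotationGroups}, since each forgetful functor is induced by the tautological restriction along $H_1 \cap H_2 \hookrightarrow H_i$. Thus the proof will be a two-line adjunction chase once the definitions are spelled out.
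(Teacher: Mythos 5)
Your argument is correct: composing the adjunction $\Ind_{H_2}^{H_1\cap H_2}\dashv \oblv^{H_2}_{H_1\cap H_2}$ with the adjunction $\oblv^{H_1}_{H_1\cap H_2}\dashv \ind_{H_1}^{H_1\cap H_2}$ yields precisely $\Hom_{D_{H_2}(X)}(\Av^{H_1}_{H_2}F,G)\cong \Hom_{D_{H_1}(X)}(F,\av^{H_2}_{H_1}G)$, naturally in both variables. The paper states this lemma without proof, treating it as formal, and your two-step adjunction chase is exactly the intended argument.
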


\subsection{Weakly monodromic sheaves}\label{sec_wm}

Let $X$ be a $G$-variety. Denote by $a,p : G \times X \to X$ the action and projection maps. We say that an object $\calF \in D(X)$ is \emph{weakly-equivariant}, if $a^* \calF \cong p^* \calF$ (an abstract isomorphism). We denote by $D_{G-wm} (X)$ the full subcategory of $D(X)$ generated under direct summands, cones, and shifts by the weakly-equivariant objects, and call the objects of $D_{G-wm} (X)$ \emph{weakly-monodromic}. Of course, the essential image of the forgetful functor $oblv^G_{\{ e \}} : D_G (X) \to D(X)$ lies in $D_{G-wm} (X)$.

More generally, for a scheme $\wt{X} \to B$ and a group scheme $\wt{G} \to B$, we define in a similar fashion $D_{\wt{G}-wm} (\wt{X})$, using $a,p : \wt{G} \times_B \wt{X} \to \wt{X}$.


\section{Limit schemes}\label{Sec:LimitSchemes}

\begin{definition}

A \emph{limit scheme} is an $\bbA^1$-scheme $f : \wt{X} \to \bbA^1$, equipped with a $\bbG_m$-action which $f$ intertwines with the standard $\bbG_m$-action on $\bbA^1$.

We also have in an obvious way the notions of morphisms between limit schemes, limit group schemes, and actions of limit group schemes on limit schemes.

\end{definition}

Note that for a limit scheme $\widetilde{X} \to \bA^1$, $X^{\circ}$ can be identified with $\bG_m \times X_1$ (a point $(t,x) \in \bG_m \times X_1$ is sent to $tx \in X^{\circ}$). Thus we can think of a limit scheme $f:\tX\ra\bA^1$ as a degeneration of $X_1$ to $X_0$.

\begin{example} [\emph{constant limit schemes}]\label{constant limit}

Let $X$ be a $\bG_m$-variety. We can consider $\widetilde{X} := \bA^1 \times X$, equipped with the diagonal $\bG_m$-action $t(s,x)=(ts,tx)$, and the projection $f: \widetilde{X} \to \bA^1$. We call it the \emph{constant limit scheme} associated to the $\bG_m$-variety $X$.

\end{example}

\subsection{Wonderful degenerations}\label{wonderful deg}

Let $G$ be a group, $K \subset G$ a subgroup, and $\gamma: \bG_m \to G$ a cocharacter.

Consider $G$ as a $\bG_m$-variety, via
$t\cdot g := \gamma(t)g$. Thus we have the corresponding constant limit scheme $\widetilde{G} (= \bA^1 \times G)$.

Consider the following closed subscheme of $G^{\circ} (= \bG_m \times G)$:
$$ K^{\circ} = \{ (t,g) | g \in \gamma(t)K\gamma(t)^{-1} \},$$
and let $\widetilde{K}$ be the closure of $K^{\circ}$ in $\widetilde{G}$. By \cite[proposition 2.3.8] {DG} or \cite{AM1}, $\widetilde{K} \ra \bA^1$ is a smooth subgroup scheme of $\wt{G} \to \bbA^1$. Consider the quotient $\widetilde{X} := \widetilde{K} \backslash \widetilde{G} \to \bbA^1$. By \cite{An}, $\widetilde{X}$ is a (separated) scheme of finite type over $\bA^1$. The $\bG_m$-action on $\widetilde{G}$ descends to $\widetilde{X}$, so that $\widetilde{X}$ is a limit scheme.

\quash{
\begin{remark}
Of course, one can analogously embed $\bG_m$ in $\bA^1 \cong \bG_m \cup \{ \infty \}$, and obtain $K_{\infty}$.
\end{remark}
}

We now assume that we are in the context of \S\ref{symmetric spaces}. In this case we have $K_0=MN$ (\cite{AM1}), hence the corresponding $\tX\ra\bA^1$ is a degeneration of the symmetric space $X=K\backslash G$ to $MN\backslash G$. One might call $\widetilde{X} \to \bA^1$ the \emph{wonderful degeneration}, because of its relation to the wonderful compactification.

We have the following nice proposition, whose proof is given in Appendix \ref{App:QuasiAff}; It extends the result in \cite[Appendix C]{DG} to symmetric spaces.

\begin{prop}
The quotient $\widetilde{X} := \widetilde{K} \backslash \widetilde{G}$ is a quasi-affine scheme.
\end{prop}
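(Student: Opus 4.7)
Following the strategy of \cite[Appendix C]{DG}, the plan is to exhibit $\widetilde X$ as a locally closed subscheme of an affine scheme of the form $\bA^1 \times V$, where $V$ is a finite-dimensional representation of $G$; since any locally closed subscheme of an affine scheme is quasi-affine, this will suffice.

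Since $K$ is reductive, the quotient $X = K \backslash G$ is affine and admits a closed $G$-equivariant embedding into some $G$-representation $V$, realized via a $K$-fixed vector $v \in V^K$ with $\on{Stab}_G(v) = K$. Decompose $v = \sum_\chi v_\chi$ into $A$-weight components; then the twisted family $\gamma(t)\cdot v = \sum_\chi t^{\langle \chi, \gamma\rangle} v_\chi$ has $G$-stabilizer $K_t = \gamma(t) K \gamma(t)^{-1}$, and it has a limit $v_0 \in V$ as $t \to 0$ precisely when $\langle \chi, \gamma\rangle \ge 0$ for every weight $\chi$ of $v$, in which case $v_0 = \sum_{\langle \chi, \gamma\rangle = 0} v_\chi$. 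By taking $V$ to be a sufficiently large sum of spherical $G$-representations with $v$ assembled from $K$-fixed vectors of suitable $A$-weight support (twisting by auxiliary $A$-characters or tensoring with further representations, in analogy with the wonderful-compactification construction), we arrange simultaneously that (a) all $A$-weights in $v$ satisfy $\langle \chi, \gamma\rangle \ge 0$ and (b) the $G$-stabilizer of $v_0$ is exactly $MN$. For (b), the containment $MN \subset \on{Stab}_G(v_0)$ is automatic: $M \subset K$ centralizes $A$, hence preserves each $v_\chi$ and in particular $v_0$; and for $X \in \fg_\alpha \subset \fn$, the $K$-invariance identity $(X + \theta(X)) v = 0$ (valid because $X + \theta(X) \in \fk$) yields, upon matching $A$-weight components, $X v_\chi = -\theta(X) v_{\chi + 2\alpha}$. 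For $\chi$ a weight of $v_0$ one has $\langle \chi + 2\alpha, \gamma\rangle = 2\langle \alpha, \gamma\rangle < 0$, forcing $v_{\chi + 2\alpha} = 0$, so $X v_0 = 0$ and $v_0$ is $\fn$-fixed. The reverse containment is secured by enlarging $V$ enough to separate $MN$-orbits.

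With such $(V, v)$ in hand, define the $\widetilde G$-equivariant morphism $\widetilde\phi : \widetilde G \to \bA^1 \times V$ by $(t, g) \mapsto (t, g \gamma(t) v)$ for $t \in \bG_m$, extended by $(0, g) \mapsto (0, g v_0)$. Its scheme-theoretic fiber stabilizer at each $t \in \bA^1$ equals $K_t$, so $\widetilde\phi$ descends to an injective morphism $\widetilde X \hookrightarrow \bA^1 \times V$ that is fiberwise a locally closed embedding (closed for $t \neq 0$ by reductivity of $K$, locally closed for $t = 0$ since $MN \backslash G$ is quasi-affine). Combined with smoothness of $\widetilde X \to \bA^1$ and of $\widetilde K \subset \widetilde G$, standard arguments then show that $\widetilde\phi$ itself is a locally closed embedding, realizing $\widetilde X$ as a locally closed subscheme of the affine scheme $\bA^1 \times V$.

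The principal obstacle is the weight-theoretic assembly in the second paragraph: producing enough $K$-fixed vectors whose $A$-weights all lie in the half-space $\langle \cdot, \gamma\rangle \ge 0$, and whose weight-zero components have joint $G$-stabilizer exactly $MN$ and not strictly larger. This is precisely the symmetric-space analogue of the core argument of \cite[Appendix C]{DG}, and rests on the representation theory of the spherical pair $(G, K)$.
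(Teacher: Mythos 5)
The overall strategy---realizing the family of subgroups $K_t$ as stabilizers of a one-parameter family of points in an affine $G$-variety built from representations, so that $\widetilde{X}$ maps suitably to an affine scheme over $\bA^1$---is the same as the paper's. But your construction has a genuine gap at the step you flag as the ``principal obstacle,'' and the obstacle is not merely technical: condition (a) is impossible to satisfy. A $K$-fixed vector $v$ in a nontrivial $G$-representation never has all of its $A$-weights in the half-space $\langle \cdot , \gamma \rangle \ge 0$. Already in the group case $(G = H \times H,\ K = \Delta H)$, the invariant vector in $V_\mu \otimes V_\mu^*$ has $A$-weight support $\{ 2\nu \}$ with $\nu$ running over all weights of $V_\mu$, which meets both open half-spaces whenever $V_\mu$ is nontrivial; and your proposed remedies fail, since an $A$-character does not extend to a $G$-character (for $G$ semisimple there are none), while tensoring with further representations only enlarges the weight support. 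Consequently $\lim_{t \to 0} \gamma(t) v$ does not exist in $V$, and your morphism $\widetilde{\phi}$ is not defined at $t = 0$.

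The fix is exactly the device of \cite[Appendix C]{DG} that the paper adopts: rescale. Scaling by a power of $t$ commutes with the linear $G$-action and does not change stabilizers, so one replaces $\gamma(t) v$ by $t^n \gamma(t) v$; the paper works with $f(t) = t^{2n} \rho(\gamma(t^{-2}))$ in $Y = \End(V)$ equipped with the $\theta$-twisted conjugation $T \cdot g = \rho(\theta(g)^{-1}) \circ T \circ \rho(g)$, whose stabilizer at $t \ne 0$ is computed to contain (resp.\ equal, for $V$ faithful) $K_t$. For $n$ large the limit at $t=0$ exists and is the extremal $\gamma$-weight component (a projection operator, in the paper's version), and your $\fn$-invariance computation $X v_\chi = -\theta(X) v_{\chi + 2\alpha}$ does go through for that component, since $\langle \chi + 2\alpha , \gamma \rangle$ then falls outside the weight support. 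Two further points where the paper does work that you elide: it does not attempt a locally closed embedding, only a quasi-finite map to an affine scheme (by \cite[C.1.1--C.1.2]{DG} this already implies quasi-affineness), so it needs only that $\on{Stab}_{G(k)}(f(t))$ contain $K_t(k)$ with \emph{finite index}; and at $t = 0$ this is achieved concretely by taking a product of three kinds of $(Y,f)$ --- a faithful $V$ (giving exactly $K_t$ for $t \ne 0$), an irreducible $V$ whose lowest weight is non-orthogonal to every coroot (forcing the stabilizer at $0$ into $P$), and finitely many $\End(V_\lambda)$ for anti-dominant $\lambda$ (making the intersection of that stabilizer with $A$ finite) --- rather than by an unspecified ``separation of $MN$-orbits.'' Your closing assertion that fiberwise immersions assemble into a locally closed embedding by ``standard arguments'' would also need justification; the quasi-finiteness route avoids it entirely.
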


\begin{remark}

More generally, one can consider a co-character $\gamma:\bG_m\ra A$ which has non-positive pairing with roots of $N$.
Then $\gamma$ defines a $\theta$-split parabolic $P_1$ containing $P$ (it is $\bar N^{\gamma} P$ where $\bar N^{\gamma}$ is the centralizer of $Im(\gamma)$ in $\bar N$). One has the corresponding $M_1 = P_1 \cap K$ and $N_1 = R_u (P_1)$, and then $K_0 = M_1 N_1$.

So, in this case, one obtains a degeneration of the symmetric space $X = K \backslash G$ to $M_1 N_1 \backslash G$ (which is ``closer" to $K \backslash G$ than $MN \backslash G$).

\quash{
More generally, one can consider co-character  $\gamma:\bG_m\ra A$ which is non-positive on roots of $N$.
We now give a description of $K_0$ in this case.
Recall that a \emph{standard parabolic} of $G$ is a parabolic of $G$ containing $P$. There are in bijection with
subset of simple restricted roots $\Delta\subset\bX(A)$. Let $P_J$ be the standard parabolic corresponds to
$J\subset\Delta$, then it has a Langlands decomposition $P_J=M_JA_JN_J$. Here $A_J=\cap_{\alpha\in J}\ker\alpha$
and $M_JA_J=Z_G(A_J)$. We define $K_J:=K\cap M_J$.
Then using a similar method in [AM1], one can show that
$K_0\is K_JN_J$. Here $J=\{\alpha\in\Delta |\langle\gamma,\alpha\rangle=0\}$.
So in this case, the wonderful degeneration is a degeneration of the symmetric space $K\backslash G$ to
$K_JN_J\backslash G$.}
\end{remark}


\section{Limit functors}\label{Sec:LimitFunctor}

\subsection{Limit functors}

Let $f: \widetilde{X} \to \bA^1$ be a limit scheme. Note that $X^{\circ}$ can be identified with $\bG_m \times X_1$ (a point $(t,x) \in \bG_m \times X_1$ is sent to $tx \in X^{\circ}$). In particular, we have a projection $a_f: X^{\circ} \to X_1$.

\begin{remark}\label{G_m-equi}
We can identify $D(X_1)$ with $D_{\bG_m}(X^{\circ})$. Then $a_f^{*}$ is identified with the forgetful functor $D_{\bG_m}(X^{\circ}) \to D(X^{\circ})$.
\end{remark}

\begin{definition}
Let $f: \widetilde{X} \to \bA^1$ be a limit scheme. The \emph{limit functor} $L_f : D(X_1) \to D(X_0)$ is defined as $$L_f := \psi_f \circ a_f^{*}.$$
\end{definition}

\begin{remark}\label{ENV}
In particular, given a $\bG_m$-variety $X$, consider the associated constant limit scheme $f: \widetilde{X} \to \bA^1$. We have $X_t = X$ for every $t \in \bA^1$. Thus, in this case, the limit functor $L_f : D(X_1) \to D(X_0)$ is a functor $L_f : D(X) \to D(X)$. This functor appears in \cite{N}, and in \cite{ENV} for the special case of the flag variety.
\quash{We will sometimes write $L_{\alpha} := L_f$, where $\alpha : \bG_m \times X \to X$ denotes the $\bG_m$-action on $X$}
\end{remark}
\quash{
\begin{remark}{\color{red}Do we need this remark? Maybe remove it?}
By remark \ref{G_m-equi}, for any $\calF\in D(X_1)$, the sheaf $a_f^{*}(\calF)$ is $\bG_m$-equivariant on
$X^{\circ}$.
Therefore by  \cite[appendix]{AB},
the sheaf
$L_f(\calF)\in D(X_0)$ is $\bG_m$-monodromic. Moreover,
the monodromy action on $L_f(\calF)$ as a nearby cycles sheaf is opposite to the
$\bG_m$-monodromy action on it as a $\bG_m$-monodromic sheaf.
\end{remark}
}

\subsection{Basic properties}

The basic properties of the nearby cycles functor and the inverse image under a smooth morphism imply the following two lemmas:

\begin{lemma}\label{LimFunBasicProp} Let $f: \widetilde{X} \to \bA^1$ and $g: \widetilde{Y} \to \bA^1$ be two limis schemes, and $\phi : \widetilde{X} \to \widetilde{Y}$ a morphism of limit schemes. By abuse of notation, we will also denote by $\phi$ the morphisms $X_t \to Y_t$ and $X^{\circ} \to Y^{\circ}$ resulting from $\phi$ by base-change.
	
	\begin{enumerate}
		
		\item The functor $L_f$ is $t$-exact (w.r.t. the perverse $t$-structure) and commutes with 
		Verdier duality $\bbD$.
		
		\item There is a canonical morphism $\phi^* \circ L_g \to L_f \circ \phi^*$. Moreover, if
		$\phi$ is smooth then this morphism is an isomorphism.
		
		\item There is a canonical morphism $L_g \circ \phi_* \to  \phi_* \circ L_f$. Moreover, if
		$\phi$ is proper then this morphism is an isomorphism.
		
		\item The following diagrams commute:
		\[ \xymatrix{L_g \ar[r] \ar[d] & L_g \phi_*\phi^* \ar[d] \\
			\phi_*\phi^* L_g \ar[r] & \phi_* L_f \phi^* } \quad, \quad \xymatrix{L_f \ar[r] \ar[d] & L_f \phi^! \phi_! \ar[d] \\
			\phi^!\phi_! L_f \ar[r] & \phi^! L_g \phi_! }.\]
		
	\end{enumerate}
	
\end{lemma}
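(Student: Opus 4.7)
My plan is to break the lemma into its four parts, using the factorization $L_f = \psi_f \circ a_f^*$ and the fact that $a_f : X^{\circ} \to X_1$ is a $\bbG_m$-trivial projection (in particular, smooth of relative dimension one).

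For part (1), I would observe that $L_f = (\psi_f[-1]) \circ (a_f^*[1])$. The factor $\psi_f[-1]$ is $t$-exact by the convention recalled in \S\ref{Sec:NotationGeneral}, and $a_f^*[1]$ is $t$-exact since $a_f$ is smooth of relative dimension one. This gives $t$-exactness of $L_f$. For Verdier duality, both factors commute with $\bbD$: the perverse nearby cycles is well-known to commute with $\bbD$, and for a smooth morphism of relative dimension $d$ one has $\bbD a_f^* \cong a_f^*[2d] \bbD$, so $a_f^*[d]$ commutes with $\bbD$ (taking $d=1$ here).

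For parts (2) and (3), the key observation is that, since $\phi : \tX \to \tY$ is $\bbG_m$-equivariant, under the identifications $X^{\circ} \cong \bbG_m \times X_1$ and $Y^{\circ} \cong \bbG_m \times Y_1$ the map $\phi^{\circ}$ becomes $\id_{\bbG_m} \times \phi_1$. Hence the square with horizontal arrows $a_f, a_g$ and vertical arrows $\phi^{\circ}, \phi_1$ is Cartesian, and smooth base change yields the canonical identifications $\phi^{\circ *} a_g^* \cong a_f^* \phi_1^*$ and $a_g^* \phi_{1*} \cong \phi^{\circ}_* a_f^*$. Composing these with the standard base change natural transformations $\phi_0^* \psi_g \to \psi_f \phi^{\circ *}$ (an iso whenever $\phi_0$ is smooth) and $\psi_g \phi^{\circ}_* \to \phi_{0*} \psi_f$ (an iso whenever $\phi^{\circ}$, hence $\phi_0$, is proper) produces the two morphisms in (2) and (3), with the asserted isomorphism properties.

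For part (4), I would argue formally: the two natural transformations constructed in (2) and (3) are mates under the adjunction $(\phi^*, \phi_*)$, because the corresponding base change natural transformations for $\psi_f/\psi_g$ and for $a_f/a_g$ are mates under the respective adjunctions, and the mate construction is functorial under vertical and horizontal composition of natural transformations. The two diagrams in the statement are then precisely the standard Beck--Chevalley compatibility diagrams expressing this mate relation; the second diagram is obtained from the first by passing to the adjoint pair $(\phi_!, \phi^!)$, for which the analogous mate morphisms $L_f \phi^! \to \phi^! L_g$ and $\phi_! L_f \to L_g \phi_!$ are constructed in the same way. The most delicate point, and the only real content beyond bookkeeping, is the verification that the nearby cycles base change natural transformations $\phi_0^* \psi_g \to \psi_f \phi^{\circ *}$ and $\psi_g \phi^{\circ}_* \to \phi_{0*} \psi_f$ are in fact mates; this however is a general property of base change natural transformations in any six-functor formalism and can be cited directly.
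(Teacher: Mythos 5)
Your proposal is correct and follows exactly the route the paper intends: the authors give no written proof, asserting only that the lemma follows from the basic properties of nearby cycles and of pullback along the smooth projection $a_f$ under the identification $X^{\circ} \cong \bbG_m \times X_1$, which is precisely what you spell out (including the mate/Beck--Chevalley compatibility underlying part (4)). One minor imprecision worth fixing: the base change map $\phi_0^* \psi_g \to \psi_f (\phi^{\circ})^*$ is an isomorphism when the total morphism $\phi : \widetilde{X} \to \widetilde{Y}$ is smooth (and dually, properness of $\phi$ itself, not merely of $\phi^{\circ}$ or $\phi_0$, is what the pushforward statement needs), so your parenthetical conditions should refer to $\phi$ rather than to its fibers --- harmless here, since the lemma's hypotheses are stated for $\phi$.
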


\begin{lemma}\label{lemma_BasicPropOfL}
	Let $f: \widetilde{X} \to \bA^1$ bet smooth a limit scheme.
	
	\begin{enumerate}
		\item The functor $L_f$ is (lax) monoidal\footnote{w.r.t. the standard ($*$-)monoidal structure $\otimes$.}, and strictly monoidal on the unit\footnote{i.e. the structural morphism $C_{X_0} \to L_f (C_{X_1})$ is an isomorphism.}.
		\item The functor $L_f$ commutes with Verdier duality $\bbD$ in the following precise sense: $$L_f (\omega_{X_1}) \cong \omega_{X_0}$$ and the structural morphism $$L_f (\UHom_{D(X_1)} ( - , \omega_{X_1})) \to \UHom_{D(X_0)} ( L_f (-) , L_f (\omega_{X_1}) )$$ is an isomorphism.
	\end{enumerate}
\end{lemma}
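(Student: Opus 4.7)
The plan is to deduce both parts from standard monoidality and Verdier-compatibility of nearby cycles, combined with the smoothness hypothesis on $f$.

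\textbf{For (1):} The projection $a_f : X^{\circ} \is \bG_m \times X_1 \to X_1$ is smooth, so $a_f^*$ is strictly symmetric monoidal (every $*$-pullback is). The nearby cycles functor $\psi_f$ carries a canonical lax monoidal structure, namely the K\"unneth / exterior product map $\psi_f(\calF) \otimes \psi_f(\calG) \to \psi_f(\calF \otimes \calG)$ (obtained from the specialization system). Composing, $L_f = \psi_f \circ a_f^*$ inherits a lax monoidal structure. For strict monoidality on the unit, one must verify $L_f(C_{X_1}) = \psi_f(C_{X^\circ}) \cong C_{X_0}$. This is the classical fact that when $f$ is smooth, so that $X_0$ is a smooth divisor in $\wt X$, the nearby cycles of the constant sheaf have no monodromy and coincide with the constant sheaf on the special fiber.

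\textbf{For (2):} The isomorphism $L_f(\omega_{X_1}) \cong \omega_{X_0}$ is immediate from Lemma \ref{LimFunBasicProp}(1) (commutation of $L_f$ with $\bbD$) together with part~(1) of the present lemma:
\[
L_f(\omega_{X_1}) \;=\; L_f(\bbD_{X_1} C_{X_1}) \;\cong\; \bbD_{X_0}(L_f C_{X_1}) \;\cong\; \bbD_{X_0}(C_{X_0}) \;=\; \omega_{X_0}.
\]
For the internal-Hom statement, recall that the structural morphism $L_f \UHom(\calF, \omega_{X_1}) \to \UHom(L_f \calF, L_f \omega_{X_1})$ is built by adjunction from the lax monoidal map $L_f \UHom(\calF, \omega_{X_1}) \otimes L_f \calF \to L_f(\UHom(\calF, \omega_{X_1}) \otimes \calF) \to L_f(\omega_{X_1})$. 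Under $\UHom(-, \omega) = \bbD(-)$, this is tautologically the same as the canonical arrow $L_f \bbD_{X_1} \to \bbD_{X_0} L_f$ supplied by Lemma \ref{LimFunBasicProp}(1), which is an isomorphism.

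\textbf{The main obstacle} is precisely this last identification: showing that the arrow constructed from the lax monoidal structure of $\psi_f$ (via the K\"unneth map) coincides with the Verdier-duality commutation of $\psi_f$ obtained independently, and not merely that they are both isomorphisms. This is standard but requires a careful diagram-chase tracking the compatibility of the K\"unneth morphism for $\psi_f$ with its Verdier self-duality; once this is granted, every other step is routine and uses only smoothness of $f$ together with the already-recorded properties of $L_f$ from Lemma~\ref{LimFunBasicProp}.
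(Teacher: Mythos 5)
Your proposal is correct and follows exactly the route the paper intends: the paper states this lemma without proof, asserting only that it follows from ``the basic properties of the nearby cycles functor and the inverse image under a smooth morphism,'' and your argument (lax monoidality of $\psi_f$ via K\"unneth, strict monoidality on the unit from $\psi_f(C_{X^\circ})\cong C_{X_0}$ for smooth $f$, and deduction of part (2) from lemma \ref{LimFunBasicProp}(1)) is precisely that elaboration. The compatibility you flag between the K\"unneth-derived structural morphism and the Verdier-duality commutation is indeed the only non-routine point, and the paper glosses over it as well.
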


Another simple property is:

\begin{lemma}\label{Remark_LimitOfWM}
	Suppose that $f : \wt{X} \to \bbA^1$ is the constant limit scheme associated to a $\bbG_m$-variety $X$. Then the restriction of $L_f : D(X) \to D(X)$ to $D_{\bbG_m-wm} (X)$ is canonically isomorphic to the identity functor.
\end{lemma}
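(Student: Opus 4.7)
Unpacking the definitions, the constant limit scheme is $\wt X = \bbA^1 \times X$ with diagonal action $t \cdot (s, x) = (ts, tx)$ and $f$ the first projection; so $X_0 = X_1 = X$, $X^\circ = \bbG_m \times X$, and $a_f(s, y) = s^{-1}y$. The map $a_f$ is the quotient map for the free diagonal $\bbG_m$-action on $X^\circ$, and by Remark \ref{G_m-equi}, $a_f^*$ identifies with the forgetful functor $D_{\bbG_m}(X^\circ) \to D(X^\circ)$. Let $p : \bbG_m \times X \to X$ denote the second projection and $q : \wt X \to X$ its tautological extension, so $q|_{X^\circ} = p$ and $q|_{X_0} = \id_X$.

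The proof hinges on two observations. First, for any $\calF \in D(X)$ the sheaf $q^*\calF$ is locally constant along the $\bbA^1$-direction, so there is a canonical isomorphism $\psi_f(p^*\calF) \simeq i_0^*(q^*\calF) = \calF$; this is the standard ``nearby cycles of a trivial family is the fiber'', formally reducing to the canonical identification $\psi_{t=0}(C_{\bbG_m}) \simeq C_{pt}$ together with a K\"unneth-type compatibility for external products. Second, for $\calF$ strongly $\bbG_m$-equivariant, the equivariant structure provides a canonical iso $a^*\calF \simeq p^*\calF$ (with $a$ the action map); pulling back along the automorphism $(s, y) \mapsto (s^{-1}, y)$ of $\bbG_m \times X$ (under which $a$ becomes $a_f$ while $p$ is preserved) converts this into a canonical iso $a_f^*\calF \simeq p^*\calF$. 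Combining, we get $L_f(\calF) = \psi_f(a_f^*\calF) \simeq \psi_f(p^*\calF) \simeq \calF$, naturally in $\calF$ on the full subcategory of strongly $\bbG_m$-equivariant objects.

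To extend to $D_{\bbG_m\text{-}\mathrm{wm}}(X)$, I use that any weakly $\bbG_m$-equivariant sheaf has unipotent $\bbG_m$-monodromy along orbits, and therefore admits a finite filtration with strongly $\bbG_m$-equivariant associated graded pieces. Consequently $D_{\bbG_m\text{-}\mathrm{wm}}(X)$ coincides with the triangulated subcategory of $D(X)$ generated (under cones, shifts, and direct summands) by strongly $\bbG_m$-equivariant objects, and the natural isomorphism of triangulated functors constructed above propagates to the whole triangulated envelope.

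The main obstacle is precisely the last step: one must verify that the candidate iso $L_f \simeq \id$ defined on the strongly equivariant generators extends coherently across the dévissage, i.e.\ is compatible with the triangles produced by the monodromy filtration. The cleanest way to handle this is to construct the natural transformation $L_f \to \id$ globally on $D_{\bbG_m\text{-}\mathrm{wm}}(X)$ at the outset, using a universal property of nearby cycles (for instance, Beilinson's unipotent nearby cycles description, in which the abstract iso $a^*\calF \simeq p^*\calF$ provided by weak equivariance is promoted to a canonical one via the unipotent monodromy operator), and only then verify it is an isomorphism by reduction to the strongly equivariant case treated in the second paragraph.
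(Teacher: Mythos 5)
Your core computation is correct and is essentially the only computation available here: identify $a_f(s,y)=s^{-1}y$, use the equivariant structure plus the involution $s\mapsto s^{-1}$ to get $a_f^*\calF\cong p^*\calF$, and then use that nearby cycles of the constant family $p^*\calF$ along $f$ is $\calF$. (For comparison: the paper states this lemma with no proof at all, as ``another simple property''; the only trace of an argument in the source is a deleted appendix which computes $\psi_f$ via the universal cover $\exp:\bbA^1\to\bG_m$, see below.) So on genuinely $\bG_m$-equivariant objects your argument is complete.

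The genuine gap is the one you flag yourself, and it is not cosmetic: the lemma is stated for $D_{\bbG_m-wm}(X)$ and asserts a \emph{canonical} isomorphism of functors. For a merely weakly-equivariant $\calF$ the isomorphism $a^*\calF\cong p^*\calF$ is abstract, so it cannot be fed into a canonical construction; and a natural isomorphism specified only on a generating class of a triangulated subcategory does not propagate through cones, since cones are not functorial. Your two proposed repairs are both left unexecuted: the claim that every weakly-equivariant object is an iterated extension of strongly equivariant ones is asserted without proof (and is not the relevant d\'evissage anyway, since the paper's $D_{\bbG_m-wm}(X)$ is generated by the weakly-equivariant objects themselves), and the ``promote the abstract isomorphism to a canonical one via the unipotent monodromy operator'' step is not a construction. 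The cleaner way to close the gap, and the one implicit in the authors' deleted argument, is to avoid the equivariance isomorphism entirely: write $\psi_f=i_0^*\,j_*\,\pi_*\,\pi^*$ with $\pi$ the pullback of $\exp:\bbA^1\to\bG_m$, observe that every object of $D_{\bbG_m-wm}(X)$ is locally constant along $\bG_m$-orbits (this property holds for weakly-equivariant objects and is stable under cones, shifts and summands), and use the canonical homotopy $h_s(t,y)=\exp(-2\pi i s t)\cdot y$ from $a_f\circ\pi$ to $p\circ\pi$ to produce a single canonical isomorphism $\pi^*a_f^*\calF\cong\pi^*p^*\calF$ for all such $\calF$ at once. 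Your step computing $\psi_f(p^*\calF)\cong\calF$ then finishes the proof uniformly, with no d\'evissage and no appeal to Beilinson's construction.
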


\subsection{Transversal fully faithfulness}

\begin{definition}

Let $\wt{X} \to B$ be a scheme, equipped with the action of two group schemes $\wt{K} , \wt{L} \to B$. The actions of $\wt{K}$ and $\wt{L}$ on $\wt{X}$ are said to be \emph{transversal}, if the morphism $$ a: \wt{K} \times_{B} \wt{L} \times_{B} \wt{X} \to \wt{X} \times_{B}  \wt{X}, \quad a(k,l,x):= (kx,lx)$$ is smooth. 

\end{definition}

\begin{remark}\label{Remark_MeaningOfTransversaility}
	If, in the setting of the previous definition, $\wt{X},\wt{K},\wt{L} \to B$ are all smooth, then the actions of $\wt{K}$ and $\wt{L}$ on $\wt{X}$ are transversal if and only if for every point $b$ of $B$ and point $x$ of $X_b$, one has $$T_x (K_b x) +T_x (L_b x) = T_x X_b,$$ i.e.``orbits are transversal".
\end{remark}

The purpose of this subsection is to prove the following theorem (for the convention regarding the subscript "$wm$", see \S\ref{sec_wm}):

\begin{thm}[\emph{Transversal fully faithfulness}]\label{TransRigid}
	Let $f: \wt{X} \to \bbA^1$ be a smooth limit scheme acted upon by two limit group schemes $\wt{K} , \wt{L} \to \bbA^1$. Assume that $f$ is proper, and that the actions of $\wt{K}$ and $\wt{L}$ on $\wt{X}$ are transversal. Then the natural morphism $$ Hom_{D(X_1)} (\calF , \calG) \to Hom_{D(X_0)} (L_f \calF , L_f \calG)$$ is an isomorphism, for all $\calF \in D_{K_1-wm} (X_1) , \calG \in D_{L_1-wm} (X_1)$.
\end{thm}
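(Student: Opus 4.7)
The plan is to reduce the theorem to a monoidal-type compatibility of $L_f$, and then to establish that compatibility via a combination of a Künneth-type property for nearby cycles and smooth base change, using transversality.

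First I would rewrite both Hom spaces as global sections of internal Hom: $\Hom_{D(X_1)}(\calF, \calG) = \Gamma(X_1, \UHom(\calF, \calG))$, and similarly over $X_0$. Using properness of $f$ (which gives proper base change for nearby cycles), the identification $a_f: X^\circ \cong \bbG_m \times X_1$, a Künneth calculation, and the fact that the nearby cycles functor sends $C_{\bbG_m}$ to the constant sheaf on a point, one verifies that $\Gamma(X_0, L_f \calH) \cong \Gamma(X_1, \calH)$ naturally in $\calH \in D(X_1)$. Thus the theorem reduces to constructing a canonical isomorphism $L_f \UHom(\calF, \calG) \xrightarrow{\sim} \UHom(L_f \calF, L_f \calG)$, with the compatibility on global sections being formal. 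Then, using the identity $\UHom(\calF, \calG) = \bbD(\calF \otimes \bbD\calG)$ and Lemma \ref{lemma_BasicPropOfL}(2) (which, applied to $\UHom(-,\omega_{X_1})$, yields a canonical iso $L_f \bbD \cong \bbD L_f$ on all of $D(X_1)$), the problem reduces further to showing that the lax-monoidal structural map $L_f\calF \otimes L_f \calH \to L_f(\calF \otimes \calH)$ is an isomorphism whenever $\calF$ is $K_1$-weakly monodromic and $\calH$ is $L_1$-weakly monodromic (noting that Verdier duality preserves weak monodromicity on smooth groups, since $!$- and $*$-pullbacks along action and projection differ by shift and twist).

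The main obstacle is this monoidal compatibility, and this is precisely where transversality enters. Consider $\wt{X}_2 := \wt{X} \times_{\bbA^1} \wt{X}$ with structural map $f_2$ and associated limit functor $L_{f_2}$. A standard Künneth property of nearby cycles gives $L_{f_2}(\calF \boxtimes \calH) \cong L_f\calF \boxtimes L_f\calH$ on $X_0 \times X_0$. Since $\calF \otimes \calH = \Delta_1^*(\calF \boxtimes \calH)$ and $L_f\calF \otimes L_f\calH = \Delta_0^*(L_f\calF \boxtimes L_f\calH)$, the monoidal compatibility is equivalent to showing $\Delta_0^* L_{f_2} \cong L_f \Delta_1^*$ when applied to $\calF \boxtimes \calH$, where $\Delta : \wt{X} \hookrightarrow \wt{X}_2$ is the diagonal. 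The diagonal is not smooth, but by transversality it factors through the smooth action map $a : \wt{Y} := \wt{K} \times_{\bbA^1} \wt{L} \times_{\bbA^1} \wt{X} \to \wt{X}_2$, $(k,l,x) \mapsto (kx, lx)$, via the section $\iota : x \mapsto (1,1,x)$. By weak equivariance of $\calF, \calH$, the sheaf $a^*(\calF \boxtimes \calH)$ is canonically identified with $\pi^*(\calF \otimes \calH)$, where $\pi : \wt{Y} \to \wt{X}$ is the projection. Combining this identification with Lemma \ref{LimFunBasicProp}(2) applied to the smooth morphisms $a$ and $\pi$, and then pulling back by $\iota$, produces the desired monoidal isomorphism. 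The main subtlety is propagating this identification from the strictly weakly equivariant case to the weakly monodromic one; this should follow by the usual generation argument, since weakly monodromic sheaves are built from weakly equivariant ones by cones, shifts, and direct summands.
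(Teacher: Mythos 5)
Your proposal is correct and follows essentially the same route as the paper: it reduces the statement to (i) the lax-monoidal structural map $L_f\calF\otimes L_f\calG\to L_f(\calF\otimes\calG)$ being an isomorphism on weakly monodromic objects, proved by factoring the diagonal through the smooth action map $a(k,l,x)=(kx,lx)$ and using weak equivariance to rewrite $a^*(\calF\boxtimes\calG)$ as a pullback from $\wt{X}$, and (ii) the identification $\Hom(C,\calH)\cong\Hom(L_fC,L_f\calH)$ coming from properness of $f$ and commutation of nearby cycles with proper pushforward. The paper organizes the duality bookkeeping through the chain $\Hom(\calF,\calG)\cong\Hom(C,\bbD(\calF\otimes\bbD\calG))$ rather than via an explicit isomorphism $L_f\UHom(\calF,\calG)\cong\UHom(L_f\calF,L_f\calG)$, but this is the same argument.
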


\begin{proof}
From lemma \ref{lemma_BasicPropOfL}, by some yoga of monoidal categories, it is enough to verify the following two properties:

\begin{enumerate}

\item The map $L_f (\calF) \otimes L_f (\calG) \to L_f (\calF \otimes \calG)$ is an isomorphism for $\calF,\calG$ as in the statement of the theorem.

\item The map $Hom_{D(X_1)} (C , \calH) \to Hom_{D(X_0)} (L_f(C) , L_f (\calH))$ is an isomorphism, for any $\calH \in D(X_1)$.

\end{enumerate}
Namely, we would have
\[\Hom_{D(X_1)} (\calF , \calG)\cong\Hom_{D(X_1)}(C,\bbD(\calF\otimes\bbD\calG))\stackrel{(2)}\cong
\Hom_{D(X_0)}(L_\gamma(C),L_\gamma(\bbD(\calF\otimes\bbD\calG)))\stackrel{(1)}\cong\]
\[\cong\Hom_{D(X_0)}(C,\bbD(L_\gamma\calF\otimes\bbD L_\gamma(\calG)))
\cong\Hom_{D(X_0)} (L_\gamma\calF ,L_\gamma\calG).\] and one can verify that the resulting isomorphism is the natural one.

The following two lemmas will confirm the above two properties.

\end{proof}

\begin{lemma}
	Let $f: \wt{X} \to \bbA^1$ be a scheme, acted upon by two group schemes $\wt{K}, \wt{L} \to \bbA^1$. Assume that the actions of $\wt{K}$ and $\wt{L}$ on $\wt{X}$ are transversal. Then the natural morphism $$\psi_f (\calF) \otimes \psi_f (\calG) \to \psi_f (\calF \otimes \calG)$$ is an isomorphism, for all $\calF \in D_{K^{\circ}-wm} (X^{\circ}) , \calG \in D_{L^{\circ}-wm} (X^{\circ})$.
\end{lemma}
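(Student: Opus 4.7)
The plan is to introduce the auxiliary $\wt Y := \wt X\times_{\bbA^1}\wt X$, with projections $\pi_1,\pi_2:\wt Y\to\wt X$ and diagonal $\Delta:\wt X\to\wt Y$, and the object $\calH := \pi_1^*\calF\otimes\pi_2^*\calG \in D(Y^\circ)$. The strategy is to identify both sides of the candidate morphism with $\Delta_0^*\psi_{\wt Y}(\calH)$: one identification will use the transversality hypothesis via smooth base change, the other a Künneth-type formula for nearby cycles.

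For the identification with $\psi_f(\calF\otimes\calG)$, consider the action map
$$
a:\wt Z := \wt K\times_{\bbA^1}\wt L\times_{\bbA^1}\wt X \longrightarrow \wt Y, \qquad (k,l,x)\mapsto(kx,lx),
$$
which is smooth precisely by the transversality hypothesis. Let $q:\wt Z\to\wt X$ be the projection onto the last factor and $s:\wt X\to\wt Z$ the section $x\mapsto(e,e,x)$, so that $q\circ s=\id_{\wt X}$ and $a\circ s=\Delta$. The weak $\wt K$- and $\wt L$-equivariance of $\calF$ and $\calG$ give an isomorphism $a^*\calH\cong q^*(\calF\otimes\calG)$, because $\pi_1\circ a$ (resp.\ $\pi_2\circ a$) factors as the $\wt K$-action (resp.\ $\wt L$-action) composed with a smooth projection, so weak equivariance absorbs the action. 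Smooth base change applied to the smooth maps $a$ and $q$ then gives
$$
a_0^*\,\psi_{\wt Y}(\calH)\cong \psi_{\wt Z}(a^*\calH)\cong \psi_{\wt Z}(q^*(\calF\otimes\calG))\cong q_0^*\,\psi_f(\calF\otimes\calG),
$$
and applying $s_0^*$ (using $a_0\circ s_0=\Delta_0$ and $s_0^*q_0^*=\id$) yields the isomorphism $\Delta_0^*\psi_{\wt Y}(\calH)\cong\psi_f(\calF\otimes\calG)$.

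For the identification with $\psi_f(\calF)\otimes\psi_f(\calG)$, note that $\wt Y$ is the pullback of $\wt X\times\wt X\to\bbA^1\times\bbA^1$ along the diagonal $\bbA^1\hookrightarrow\bbA^1\times\bbA^1$, and $\calH$ is the restriction to $Y^\circ$ of the external product $\calF\boxtimes\calG$. A Künneth-type formula for external nearby cycles (in the spirit of Beilinson--Deligne) then yields $\psi_{\wt Y}(\calH)\cong\psi_f(\calF)\boxtimes\psi_f(\calG)$ on $Y_0=X_0\times X_0$; pulling back along $\Delta_0$ gives $\psi_f(\calF)\otimes\psi_f(\calG)$. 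Combining the two identifications yields the claimed isomorphism. The main technical obstacle is that the weak-equivariance isomorphism $a^*\calH\cong q^*(\calF\otimes\calG)$ is \emph{a priori} only an abstract isomorphism, so one must reduce to generating weakly equivariant objects (for which the equivariance can be made coherent) and verify that the chain of canonical isomorphisms assembled above composes to the natural lax-monoidal structure map that we intend to show is an isomorphism.
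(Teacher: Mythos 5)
Your proposal is correct and follows essentially the same route as the paper's proof: both reduce via the K\"unneth formula for nearby cycles on $\wt{X}\times_{\bbA^1}\wt{X}$ to showing $\psi$ commutes with $\Delta^*$ on $\calF\boxtimes\calG$, and then factor $\Delta = a\circ s$ through the action map $a(k,l,x)=(kx,lx)$, using transversality for smoothness of $a$ and weak equivariance to rewrite $a^*(\calF\boxtimes\calG)$ as a pullback along the smooth projection to $\wt{X}$. The only cosmetic difference is that you phrase the conclusion as a chain of base-change isomorphisms followed by $s_0^*$, where the paper says ``$\psi$ commutes with $s^*$ on the essential image of $p^*$''.
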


\begin{proof}
	
	Let us define some morphisms: $$ \Delta: \wt{X} \to \wt{X} \times_{\bbA^1} \wt{X}, \quad \Delta(x):=(x,x),$$ $$ a: \wt{K} \times_{\bbA^1} \wt{L} \times_{\bbA^1} \wt{X} \to \wt{X} \times_{\bbA^1} \wt{X}, \quad a(k,l,x):=(kx,lx),$$ $$ p: \wt{K} \times_{\bbA^1} \wt{L} \times_{\bbA^1} \wt{X} \to \wt{X}, \quad p(k,l,x):=x.$$ $$ s: \wt{X} \to \wt{K} \times_{\bbA^1} \wt{L} \times_{\bbA^1} \wt{X}, \quad s(x):=(1,1,x). $$
	
	One immediately reduces to $\calF, \calG$ being weakly-equivariant (rather than just weakly-monodromic), so we assume that.
	
	What we need to show is that $\psi$ commutes with $\Delta^*$ on the object $\calF \boxtimes_{\bbG_m} \calG$. Since $\Delta = a \circ s$, it is enough to check that $\psi$ commutes with $a^*$, and that $\psi$ commutes with $s^*$ on $a^* (\calF \boxtimes_{\bbG_m} \calG)$.
	
	Regarding the first fact, it is simply because $a$ is smooth (by the transversality assumption).
	
	As for the second fact, since $\calF \boxtimes_{\bbG_m} \calG$ is $(K^{\circ} \times_{\bbG_m} L^{\circ})$-weakly-equivariant, we have $$a^* (\calF \boxtimes_{\bbG_m} \calG) \cong p^* \Delta^* (\calF \boxtimes_{\bbG_m} \calG).$$ Thus, it is enough to show that $\psi$ commutes with $s^*$ on the essential image of $p^*$. This is clear, since $\psi$ commutes with $p^*$ (since $p$ is smooth), and $\psi$ commutes with $(p \circ s)^*$ (since $p \circ s = id$ is smooth).
\end{proof}

\begin{lemma}
	Let $f: \wt{X} \to \bbA^1$ be a smooth limit scheme. Assume that $f$ is proper. Then the natural morphism $$ Hom_{D(X_1)} (C , \calH) \to Hom_{D(X_0)} (L_f(C) , L_f (\calH))$$ is an isomorphism, for any $\calH \in D (X_1)$.
\end{lemma}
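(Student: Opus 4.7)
The plan is to reduce the claim to lemma \ref{LimFunBasicProp}(3) applied to the properness of $f$ itself, followed by an explicit computation of the limit functor on a point.

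First, using the canonical isomorphism $L_f(C_{X_1})\is C_{X_0}$ coming from lemma \ref{lemma_BasicPropOfL}(1), the target Hom space identifies with $\Hom_{D(X_0)}(C_{X_0}, L_f\calH) = \pi_{0*}(L_f\calH)$ (here and below $\pi_t:X_t\to pt$ denotes the structural morphism), while the source is $\pi_{1*}\calH$. The natural map in the statement becomes a morphism $\pi_{1*}\calH\to\pi_{0*}(L_f\calH)$ in $D(pt)$, natural in $\calH$, and we need to show it is an isomorphism.

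Next, view $f:\wt X\to\bbA^1$ itself as a morphism of limit schemes, with target the trivial limit scheme $\mathrm{id}_{\bbA^1}:\bbA^1\to\bbA^1$ (equipped with its standard $\bbG_m$-action). Since $f$ is proper, lemma \ref{LimFunBasicProp}(3) produces a canonical isomorphism $L_{\mathrm{id}}\circ\pi_{1*}\is\pi_{0*}\circ L_f$ of functors $D(X_1)\to D(pt)$, where $L_{\mathrm{id}}$ denotes the limit functor attached to $\mathrm{id}_{\bbA^1}$.

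Finally, one checks that $L_{\mathrm{id}}:D(pt)\to D(pt)$ is the identity functor. Indeed, for $\calF\in D(pt)$ we have $L_{\mathrm{id}}(\calF)=\psi_{\mathrm{id}}(a^*\calF)=\psi_{\mathrm{id}}(C_{\bbG_m}\otimes\calF)$; since nearby cycles send the constant sheaf on $\bbG_m$ to the constant sheaf on the point and commute with tensoring by a fixed object pulled back from $pt$, this evaluates to $\calF$. Composing with the previous step yields the desired isomorphism $\pi_{1*}\calH\is\pi_{0*}(L_f\calH)$.

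The only remaining step is the (purely formal) verification that this composite isomorphism agrees with the canonical map of the statement. This follows by tracing through the construction of the base-change isomorphism of lemma \ref{LimFunBasicProp}(3) and of the unit $L_f(C_{X_1})\is C_{X_0}$; no new idea is involved. The whole argument is essentially the observation that proper base change for nearby cycles, applied to the proper morphism $f$, is exactly what is needed here.
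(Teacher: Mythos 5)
Your proposal is correct and follows essentially the same route as the paper: both arguments identify the two Hom spaces with $\pi_*\calH$ and $\pi_* L_f(\calH)$ via strict monoidality on the unit, invoke the proper push-forward compatibility of lemma \ref{LimFunBasicProp}(3) for $f$ viewed as a morphism to the trivial limit scheme $\mathrm{id}_{\bbA^1}$, and observe that the limit functor on the point is the identity. The only (immaterial) difference is that you verify $L_{\mathrm{id}}\cong\mathrm{id}$ by direct computation of nearby cycles on $\bbA^1$, whereas the paper implicitly relies on lemma \ref{Remark_LimitOfWM}.
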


\begin{proof}
Denote by $g: \bA^1 \to \bA^1$ the constant limit scheme (i.e. $g = id$ and the $\bG_m$-action is the standard one) and by $\pi: \widetilde{X} \to \bA^1$ the projection (i.e. $\pi = f$).
Then the morphism $$Hom_{D(X_1)} (C , \calH) \to Hom_{D(X_0)} (L_f (C) , L_f (\calH))$$ can be expressed as $$ Hom_{D(X_1)} (C , \calH) \cong \pi_* \calH \cong L_g (\pi_* \calH) \cong \pi_* L_f (\calH) \cong Hom_{D(X_0)}(C , L_f (\calH))\cong$$
$$
\cong Hom_{D(X_0)} (L_f (C) , L_f (\calH)) .$$
The lemma follows.

\end{proof}

\subsection{Equivariant limit functors}\label{equivariant limit}

Let $\widetilde{X}$ be a limit scheme acted upon by a limit group scheme $\widetilde{K}$. One can try  to consider the "limit stack" $\widetilde{K} \backslash \widetilde{X}$ and the corresponding limit functor $D_{K_1}(X_1) \to D_{K_0}(X_0)$. However, we will not try to deal with such generalities, and restrict ourselves to a special case (the one that we will need).

Let $G$ be a group, and $\gamma : \bG_m \to G$ a cocharacter. Let $X$ be a $G$-variety. Via $\gamma$, $X$ is a $\bG_m$-variety, and we consider the associated constant limit scheme $f : \widetilde{X} \to \bA^1$ and the resulting limit functor $L_{\gamma} := L_f : D(X) \to D(X)$. We also consider the constant limit group scheme $\wt{G}$ associated to $G$ equipped with the $\bbG_m$-action $t \cdot g := \gamma (t) g \gamma(t)^{-1}$. Then $\wt{G}$ acts on $\wt{X}$ as limit schemes.

Let $K \subset G$ be a subgroup. We have the corresponding subgroup limit scheme $\wt{K} \subset \wt{G}$ (as in \S\ref{wonderful deg}).

\begin{constructionlemma}\label{LemmaEquiLift}
The functor  $L_{\gamma}: D(X) \to D(X)$ admits a natural lifting to a functor $L_{\gamma} : D_K(X) \to D_{K_0}(X)$ (which we denote by the same name, by abuse of notation).
\end{constructionlemma}

\begin{proof}

Step 1: Recall that the equivariant derived category is a $2$-limit of certain approximations, obtained by free resolutions. So let us assume first that $X$ is a free $G$-variety. Then the stack $\widetilde{K} \backslash \widetilde{X}$ is a scheme; Indeed, in general it can be written $\left( \widetilde{K} \backslash \widetilde{G} \right) \times_{\widetilde{G}} \widetilde{X}$. As noted in \S\ref{wonderful deg}, $\widetilde{K} \backslash \widetilde{G}$ is a scheme, and since $X$ is a free $G$-variety, locally, writing $X = G \times Y$, we will have $\left( \widetilde{K} \backslash \widetilde{G} \right) \times \widetilde{Y}$. To conclude, in the case that $X$ is a free $G$-variety, $\widetilde{K} \backslash \widetilde{X} \to \bA^1$ is a limit scheme, whose fiber over $t \in \bA^1$ is $K_t \backslash X$. Applying the limit functor to this limit scheme, we obtain $L : D(K \backslash X) \to D(K_0 \backslash X)$, which is the same as $L: D_K(X) \to D_{K_0}(X)$.

Step 2: In general, as recalled in \S\ref{Sec:NotationGroups}, objects of $D_K(X)$ are represented by objects of $D(K \backslash Y)$ for various smooth free $G$-resolutions $Y \to X$, satisfying compatibility with smooth pullback (an similarly for $D_{K_0}(X)$ etc.). Since the limit functor commutes with smooth base change, it is clear that we can "glue" the limit functors of step 1 into a limit functor $D_K(X) \to D_{K_0}(X)$.

\end{proof}

\begin{remark}\label{Rem:EquiLimFuncComm}
As a particular case of the above construction, suppose that $X$ is a $\bG_m$-variety, equipped with an action of a group $M$, and that the $\bG_m$-action on $X$ and the $M$-action on $X$ commute. Then (say, applying the above for $G := \bG_m \times M$ and $K := M$), we obtain a lifting of the limit functor $L : D(X) \to D(X)$ to a functor $L : D_M(X) \to D_M(X)$ (by abuse of notation, we will denote it by the same name).
\end{remark}

Let $L \subset G$ be another subgroup, and $M \subset K \cap L$ a subgroup which lies in the centralizer of $Im(\gamma)$. We have an equivariant version of theorem \ref{TransRigid}:

\begin{thm}\label{TransRigidEqui}
	The natural morphism $$ Hom_{D_M(X)} (\calF , \calG) \to Hom_{D_M (X)} (L_{\gamma} (\calF) , L_{\gamma} (\calG))$$ is an isomorphism, for all $\calF \in D_{K-wm}(X), \calG \in D_{L-wm} (X)$.
\end{thm}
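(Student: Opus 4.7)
The plan is to mimic the proof of theorem \ref{TransRigid} in the $M$-equivariant setting. The formal yoga used there --- factoring $\Hom(\calF,\calG)=\Hom(C,\bbD(\calF\otimes\bbD\calG))$ and pushing through $L_{\gamma}$ --- uses only monoidality and Verdier self-duality, both of which lift to $D_M(\cdot)$ since $M$ centralizes $\on{Im}(\gamma)$: by remark \ref{Rem:EquiLimFuncComm} the functor $L_{\gamma}:D_M(X)\to D_M(X)$ is well-defined, commutes with $\oblv^M_{\{e\}}$, and inherits the properties of lemma \ref{lemma_BasicPropOfL}. Hence it suffices to establish the $M$-equivariant analogues of the two key lemmas appearing in the proof of theorem \ref{TransRigid}:
\begin{enumerate}
\item[(i)] for $\calF\in D_{K-wm}(X)$ and $\calG\in D_{L-wm}(X)$, the natural map $L_{\gamma}(\calF)\otimes L_{\gamma}(\calG)\to L_{\gamma}(\calF\otimes\calG)$ is an isomorphism in $D_M(X)$;
\item[(ii)] for every $\calH\in D_M(X)$, the natural map $\Hom_{D_M(X)}(C,\calH)\to\Hom_{D_M(X)}(L_{\gamma}C,L_{\gamma}\calH)$ is an isomorphism.
\end{enumerate}

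Property (i) follows from its non-equivariant counterpart, which was established inside the proof of theorem \ref{TransRigid}. Indeed, $\oblv^M_{\{e\}}:D_M(X)\to D(X)$ is conservative, commutes with $\otimes$, $L_{\gamma}$, and the comparison map in question, and sends an $M$-equivariant $K$-weakly-monodromic object to a $K$-weakly-monodromic object (weak-monodromicity being a property checked after forgetting $M$-equivariance).

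For property (ii), I would imitate the second lemma in the proof of theorem \ref{TransRigid}, with $\pi:X\to pt$ replaced by the $M$-equivariant projection. Choose an $M$-stable open $V$ in a finite-dimensional $M$-representation on which $M$ acts freely, with complement of arbitrarily high codimension, so that $D_M(X)$ is computed by $D(Y)$ in the relevant truncation range, where $Y:=M\backslash(V\times X)$. Since $M$ centralizes $\gamma$, the $\bG_m$-action on $X$ extends trivially to $V$ and descends to $Y$, and the resulting constant limit functor $L:D(Y)\to D(Y)$ realizes $L_{\gamma}$ on $D_M(X)$ under the approximation equivalence (by construction-lemma \ref{LemmaEquiLift}). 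The projection $\pi:Y\to pt$ factors as $Y\xrightarrow{\pi_1}V/M\xrightarrow{\pi_2}pt$, where $\pi_1$ is proper with fiber $X$ (proper by the hypothesis inherited from theorem \ref{TransRigid}), and $V/M$ carries the trivial $\bG_m$-action. By lemma \ref{LimFunBasicProp}(3), $L$ commutes with $\pi_{1,*}$; by lemma \ref{Remark_LimitOfWM}, $L$ is the identity on $D(V/M)$. Composing yields $L\circ\pi_*\cong\pi_*$, and the argument concludes as in the non-equivariant case. The main technical obstacle I anticipate is managing the $M$-equivariant approximation rigorously --- in particular, verifying that $L_{\gamma}$ on $D_M(X)$ is faithfully represented by $L$ on $D(Y)$ in the relevant truncation range, and that the $\bG_m$-action descends to $Y$ compatibly --- but this is handled by construction-lemma \ref{LemmaEquiLift} together with the standard approximation formalism of \S\ref{Sec:NotationGroups}.
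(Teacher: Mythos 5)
Your proposal is correct in outline, but it takes a genuinely different and more laborious route than the paper. The paper does not re-run the argument of theorem \ref{TransRigid} inside $D_M(X)$ at all: it observes that the map $\Hom_{D_M(X)}(\calF,\calG)\to\Hom_{D_M(X)}(L_\gamma\calF,L_\gamma\calG)$ is obtained by applying the (derived $M$-)invariants functor $D_M(pt)\to D(pt)$ to a map of \emph{enriched} Hom objects $HOM_{D_M(X)}(\calF,\calG)\to HOM_{D_M(X)}(L_\gamma\calF,L_\gamma\calG)$ in $D_M(pt)$; since the forgetful functor $D_M(pt)\to D(pt)$ is conservative and carries this enriched map to the non-equivariant map $\Hom_{D(X)}(\oblv^M_{\{e\}}\calF,\oblv^M_{\{e\}}\calG)\to\Hom_{D(X)}(L\,\oblv^M_{\{e\}}\calF,L\,\oblv^M_{\{e\}}\calG)$, which is an isomorphism by theorem \ref{TransRigid}, the enriched map is an isomorphism and one concludes by applying invariants. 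Your approach instead redoes the two key lemmas equivariantly: for the monoidality statement (i) you use exactly the same conservativity-of-$\oblv^M$ trick, which is fine; but for statement (ii) you are forced into the approximation formalism, and you correctly notice the wrinkle that the structure map $Y=M\backslash(V\times X)\to pt$ of the approximation is no longer proper, so you must factor through $V/M$ and combine properness of $Y\to V/M$ with triviality of the $\bG_m$-action on $V/M$. This can be made to work (together with the bookkeeping of compatibility of the base-change maps with composition and with the transition maps between approximations), but it is precisely the kind of overhead the paper's enriched-Hom argument is designed to avoid: by pushing the conservativity argument all the way down to $D_M(pt)$, the paper never has to verify any property of $L_\gamma$ on $D_M(X)$ beyond its compatibility with $\oblv^M_{\{e\}}$.
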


\begin{proof}

We consider the ``enriched" $Hom$, \[ HOM_{D_M(X)} (\cdot , \cdot) \in D_M(pt) \] (one has $HOM_{D_M(X)}(\calF,\calG) \cong \bbD (\bbD \calG \otimes \calF)$). Applying the ``invariants" functor $D_M(pt) \to D(pt)$ to $HOM_{D_M(X)} (\cdot , \cdot)$ one obtains \[ Hom_{D_M(X)}(\cdot,\cdot), \] while applying the forgetful functor $D_M(pt) \to D(pt)$ to $HOM_{D_M(X)} (\cdot , \cdot)$ one obtains \[ Hom_{D(X)}(\oblv^M_{\{ e \}}(\cdot),\oblv^M_{\{ e \}}(\cdot)).\]

By these remarks, for objects $\calF,\calG \in D_M(X)$, the map \[ Hom_{D_M(X)} (\calF,\calG) \to Hom_{D_M(X)} (L_{\gamma} \calF, L_{\gamma} \calG) \] is an isomorphism if the map \[ HOM_{D_M(X)} (\calF,\calG) \to HOM_{D_M(X)} (L_{\gamma} \calF, L_{\gamma} \calG) \] is an isomorphism. Since the forgetful functor $D_M(pt) \to D(pt)$ is conservative, the later map is an isomorphism if the map \[ Hom_{D(X)}(\oblv^M_{\{ e \}}(\calF),\oblv^M_{\{ e \}}(\calG)) \to Hom_{D(X)}(L \oblv^M_{\{ e \}}(\calF),L \oblv^M_{\{ e \}}(\calG)) \] is an isomorphism. But this map is an isomorphism indeed, by theorem \ref{TransRigid}.
\end{proof}

We will also need the following ``relative" version of theorem \ref{TransRigidEqui}:

\begin{thm}\label{relative version}
We preserve the set up in theorem \ref{TransRigidEqui}. Let $Z$ be another smooth variety. Regard $X\times Z$ as a $\bG_m$-variety and an $M$-variety where $\bG_m$ and $M$ act only on $X$.
Let $\calF=\calF_1\boxtimes\calF_2,\calG=\calG_1\boxtimes\calG_2 \in D_{M}(X\times Z)$ and suppose that
$\calF_1 \in D_{K-wm}(X), \calG_2 \in D_{L-wm} (X)$.
Then the natural morphism
$Hom_{D_{M}(X\times Z)} (\calF,\calG) \to Hom_{D_{M}(X\times Z)}(L_\gamma \calF, L_\gamma \calG)$ is an isomorphism.
\end{thm}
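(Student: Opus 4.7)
The plan is to adapt the reduction pattern of Theorem~\ref{TransRigidEqui} to the product situation. The first step would be to pass to the enriched Hom complex $HOM_{D_M(X \times Z)}(\cdot, \cdot) \in D_M(pt)$: since the forgetful functor $D_M(pt) \to D(pt)$ is conservative and sends this to $Hom_{D(X \times Z)}(\oblv^M_{\{e\}}(\cdot), \oblv^M_{\{e\}}(\cdot))$, it suffices to establish the analogous isomorphism of plain $Hom$'s in $D(X \times Z)$, without $M$-equivariance.

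Next, I would exploit that both the $\bG_m$-action and the $M$-action on $X \times Z$ are trivial on $Z$. Consequently, the associated constant limit scheme of $X \times Z$ is canonically $\wt{X} \times Z \to \bbA^1$, where $\wt{X}$ is the constant limit scheme of $X$. Because $\psi_f$ commutes with smooth pullback, the projection to $\wt{X}$ yields a canonical isomorphism $L_\gamma(\calA \boxtimes \calB) \cong L_\gamma(\calA) \boxtimes \calB$ for $\calA \in D(X)$ and $\calB \in D(Z)$; in particular, the external product form of $\calF$ and $\calG$ is preserved by $L_\gamma$.

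The final input is K\"unneth. Combined with the compatibility of $\bbD$ with $\boxtimes$ (i.e.\ $\bbD(\calA \boxtimes \calB) \cong \bbD \calA \boxtimes \bbD \calB$) and the distributivity of $\otimes$ over $\boxtimes$, one obtains
$$\UHom_{X \times Z}(\calF_1 \boxtimes \calF_2, \calG_1 \boxtimes \calG_2) \cong \UHom_X(\calF_1, \calG_1) \boxtimes \UHom_Z(\calF_2, \calG_2),$$
and pushing forward to a point yields
$$Hom_{D(X \times Z)}(\calF_1 \boxtimes \calF_2, \calG_1 \boxtimes \calG_2) \cong Hom_{D(X)}(\calF_1, \calG_1) \otimes Hom_{D(Z)}(\calF_2, \calG_2).$$
The same identification holds after applying $L_\gamma$, by the previous paragraph. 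Under these identifications the map of the theorem becomes the tensor product of the map $Hom_{D(X)}(\calF_1, \calG_1) \to Hom_{D(X)}(L_\gamma \calF_1, L_\gamma \calG_1)$ with the identity on $Hom_{D(Z)}(\calF_2, \calG_2)$. The former is an isomorphism by Theorem~\ref{TransRigid} (using that $\calF_1$ and $\calG_1$ are weakly-monodromic for $K$ and $L$ respectively, and that the actions of $\wt{K}, \wt{L}$ on $\wt{X}$ are transversal in the symmetric-pair setting), and the claim follows.

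The main obstacle I anticipate is the naturality bookkeeping: one must verify that the K\"unneth-type identifications intertwine the canonical structure map defining the morphism of the theorem. This is routine but requires care in tracking the unit/counit diagrams. A conceptually cleaner alternative is to stay throughout in the enriched setting of $HOM$'s in $D_M(pt)$, employing an $M$-equivariant K\"unneth formula (noting $M$ acts trivially on $Z$); this avoids the forgetful reduction but is notationally heavier.
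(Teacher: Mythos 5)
Your proposal is correct and follows essentially the same route as the paper's proof: reduce to the non-equivariant case by the argument of Theorem \ref{TransRigidEqui}, observe that $L_\gamma$ preserves the external product structure because $\bG_m$ acts only on $X$, apply the K\"unneth formula to both sides, and identify the resulting map as $h\otimes \mathrm{id}$ with $h$ an isomorphism by Theorem \ref{TransRigid}. The only minor quibble is that $L_\gamma(\calA\boxtimes\calB)\cong L_\gamma(\calA)\boxtimes\calB$ for general $\calB$ needs slightly more than smoothness of the projection (e.g.\ the K\"unneth formula for nearby cycles), but this is a standard fact that the paper likewise asserts without further argument.
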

\begin{proof}
By the same argument as in theorem \ref{TransRigidEqui}, it suffices to prove the non-equivariant version, so let us suppose that $M=\{ e \}$.
Since $\bG_m$ acts only on $X$, we have a natural isomorphism
$L_\gamma(\calF)\is L_\gamma(\calF_1)\boxtimes\calF_2$,  
$L_\gamma(\calG)\is L_\gamma(\calG_1)\boxtimes\calG_2$
and the
K$\ddot{\on{u}}$nneth formula implies
\[Hom_{D(X\times Z)} (\calF,\calG)\is Hom_{D(X)}(\calF_1,\calG_1)\otimes Hom_{D(Z)}(\calF_2,\calG_2)\]
and
\[Hom_{D(X\times Z)} (L_\gamma\calF,L_\gamma\calG)\is Hom_{D(X)}(L_\gamma\calF_1,L_\gamma\calG_1)\otimes Hom_{D(Z)}(\calF_2,\calG_2).\]
Moreover, under the isomorpshims above the map
\[ Hom_{D(X\times Z)} (\calF,\calG) \to Hom_{D(X\times Z)}(L_\gamma \calF, L_\gamma \calG) \]
becomes
\[Hom_{D(X)}(\calF_1,\calG_1)\otimes Hom_{D(Y)}(\calF_2,\calG_2)\stackrel{h\otimes id}\ra Hom_{D(X)}(L_\gamma\calF_1,L_\gamma\calG_1)\otimes Hom_{D(Z)}(\calF_2,\calG_2),\]
where $h$ is an isomorphism by theorem \ref{TransRigid}. The result follows.

\end{proof}


\quash{

\section{New}

\subsection{}

Let $f : \wt{X} \to \bbA^1$ be an $\bbA^1$-scheme. We denote by $$\psi_f : D(X^{\circ}) \to D(X_0)$$ the nearby cycles functor. We will often-times abbreviate, and write $\psi$ instead of $\psi_f$ (and $f$ should be clear from the context).

Let  $g : \wt{Y} \to \bbA^1$ be another $\bbA^1$-scheme, and let $\phi : \wt{Y} \to \wt{X}$ be a morphism of $\bbA^1$-schemes.

We have a natural morphism $$ \phi_0^* \psi_f \to \psi_g (\phi^{\circ})^* .$$ If $h : \wt{Z} \to \bbA^1$ is a third $\bbA^1$-scheme, and $\nu : \wt{Z} \to \wt{Y}$ is a morphism of $\bbA^1$-schemes, then the following diagram commtues: $$ \xymatrix{(\phi \nu)_0^* \psi_f \ar@/_1pc/[rrrr] & \cong \nu_0^* \phi_0^* \psi_f \ar[r] & \nu_0^* \psi_g (\phi^{\circ})^* \ar[r] & \psi_h (\nu^{\circ})^* (\phi^{\circ})^* \cong & \psi_h ((\phi \nu)^{\circ})^*}.$$

We will say that \textbf{$\psi$ commutes with $\phi^*$} if the natural morphism $$\phi_0^* \psi_f \to \psi_g (\phi^{\circ})^*$$ is an isomorphism. Given $\calF \in D(X^{\circ})$, we will say that \textbf{$\psi$ commutes with $\phi^*$ on $\calF$} if the natural morphism $$\phi_0^* \psi_f \calF \to \psi_g (\phi^{\circ})^* \calF$$ is an isomorphism.

\subsection{}
Let $\wt{K} \to \bbA^1 , \wt{L} \to \bbA^1$ be two groups schemes, and $f: \wt{X} \to \bbA^1$ a scheme equipped with actions of $\wt{K}$ and $\wt{L}$ (over $\bbA^1$).

\begin{definition}
\
\begin{enumerate}
\item	The actions of $\wt{K}$ and $\wt{L}$ on $\wt{X}$ are said to be \textbf{transversal}, if the morphism $$ a: \wt{K} \times_{\bbA^1} \wt{L} \times_{\bbA^1} \wt{X} \to \wt{X} \times_{\bbA^1}  \wt{X}, \quad a(k,l,x):= (kx,lx)$$ is smooth. 
\item We call the triple $(\wt{K},\wt{L},\wt{X})$ a \emph{transversal triple}
if the actions of $\wt{K}$ and $\wt{L}$ on $\wt{X}$ are transversal. 
\item
We call the triple $(\wt{K},\wt{L},\wt{X})$ a \emph{limit transversal triple}
if it is a transversal triple and $\wt{K}$, $\wt{L}$ and $\wt{X}$ are limit schemes.
\end{enumerate}
\end{definition}

\begin{example}
Let $(G, H_0, H_\infty, \gamma, Y)$ be a Matsuki datum and let $K\subset G$
be a subgroup adapted to the Matsuki datum (see \S\ref{Matsuki setting}). 
Let $\wt{K}\ra\mathbb A^1$ be the limit group scheme constructed in \S\ref{wonderful deg} and 
$\wt{H}_\infty=H_\infty\times\mathbb A^1$, $\wt{Y}=Y\times\mathbb A^1$ be the constant 
limit schemes. Then the triple $(\wt{K},\wt{H}_\infty,\wt{Y})$ is a limit transversal triple.
\end{example}

Let $\calF \in D_{K^{\circ}}(X^{\circ}), \calG \in D_{L^{\circ}}(X^{\circ})$.

\begin{lemma}[Strict monoidal, new version]
	Suppose that the actions of $\wt{K}$ and $\wt{L}$ on $\wt{X}$ are transversal. Then the natural morphism $$\psi (\calF \boxtimes_{\bbG_m} \calG) \to \psi (\calF) \boxtimes \psi (\calG)$$ is an isomorphism.
\end{lemma}

\begin{proof}
	
	Let us define some morphisms: $$ \Delta: \wt{X} \to \wt{X} \times_{\bbA^1} \wt{X}, \quad \Delta(x):=(x,x),$$ $$ a: \wt{K} \times_{\bbA^1} \wt{L} \times_{\bbA^1} \wt{X} \to \wt{X} \times_{\bbA^1} \wt{X}, \quad a(k,l,x):=(kx,lx),$$ $$ p: \wt{K} \times_{\bbA^1} \wt{L} \times_{\bbA^1} \wt{X} \to \wt{X}, \quad p(k,l,x):=(x).$$ $$ s: \wt{X} \to \wt{K} \times_{\bbA^1} \wt{L} \times_{\bbA^1} \wt{X}, \quad s(x):=(1,1,x). $$
	
	We want to show that $\psi$ commutes with $\Delta^*$ on the object $\calF \boxtimes_{\bbG_m} \calG$. Since $\Delta = a \circ s$, it is enough to check that $\psi$ commutes with $a^*$, and that $\psi$ commutes with $s^*$ on $(a^{\circ})^* (\calF \boxtimes_{\bbG_m} \calG)$.
	
	Regarding the first fact, it is simply because $a$ is smooth ({\color{red} complete}).
	
	As for the second fact, since $\calF \boxtimes_{\bbG_m} \calG$ is $(K^{\circ} \times_{\bbG_m} L^{\circ})$-equivariant, we have $$(a^{\circ})^* (\calF \boxtimes_{\bbG_m} \calG) \cong (p^{\circ})^* (\Delta^{\circ})^* (\calF \boxtimes_{\bbG_m} \calG).$$ Thus, it is enough to show that $\psi$ commutes with $s^*$ on the essential image of $(p^{\circ})^*$. This is clear, since $\psi$ commutes with $p^*$ (since $p$ is smooth), and $\psi$ commutes with $(p \circ s)^*$ (since $p \circ s = id$ is smooth).
\end{proof}

\subsection{}

Assume that everything ($\wt{K},\wt{L},\wt{X}$) has in addition $\bbG_m$-action ("limit schemes").

Let $\calF \in D_{K_1} (X_1), \calG \in D_{L_1}(X_1)$.

\begin{thm}[Transversal fully faithfulness, new version]
	Suppose that the actions of $\wt{K}$ and $\wt{L}$ on $\wt{X}$ are transversal, and suppose that $f : \wt{X} \to \bbA^1$ is proper. Then the natural morphism $$ Hom_{D(X_1)} (\calF , \calG) \to Hom_{D(X_0)} (L (\calF) , L (\calG))$$ is an isomorphism.
\end{thm}

\begin{thm}[Equivariant transversal fully faithfulness, new version]
	Let $G$ be a group and $\gamma : \bbG_m \to G$ a cocharacter. Let $M \subset G$ lie in the centralizer of $Im(\gamma)$. and $\wt{K} , \wt{L}$.
	
	Let $X$ be a $G$-variety.
	
	Suppose that the actions of $\wt{K}$ and $\wt{L}$ on $\wt{X}$ are transversal, and suppose that $f : \wt{X} \to \bbA^1$ is proper. Then the natural morphism $$ Hom_{D(X_1)} (\calF , \calG) \to Hom_{D(X_0)} (L (\calF) , L (\calG))$$ is an isomorphism.
\end{thm}

}


\section{A formula in the Matsuki setting}\label{Matsuki setting}

In this section we obtain a formula, which describes the limit functor as a composition of two averaging functors, in a setting similar to that of the Matsuki datum of \cite{MUV}.

\begin{definition}\label{Matsuki datum}
A \emph{Matsuki datum} consists of the following. We are given:
\begin{itemize}

\item A group $G$.

\item Two closed subgroups $H_{0} , H_{\infty} \subset G$. Denote $M := H_{0} \cap H_{{\infty}}$.

\item A co-character $\gamma : \bG_m \to G$.

\item A smooth $G$-variety $X$, which we also consider as a $\bG_m$-variety via $\gamma$.

\end{itemize}

We suppose that:

\begin{itemize}

\item $H_{0} / M$ and $H_{\infty} / M$ are (connected) unipotent groups.

\item $Im(\gamma)$ normalizes $H_{0}$ and $H_{\infty}$, and centralizes $M$.

\item Each $H_{0}$-orbit in $X$ is transversal to each $H_{\infty}$-orbit in $X$.

\item The action of $\bG_m$ on $X$ preserves the $H_{0}$-orbits and the $H_{\infty}$-orbits.

\item $M$ has finitely many orbits in the $\bG_m$-fixed point set $X^{\bG_m}$.

\item For any $M$-orbit $\mO$ in $X^{\bG_m}$ denote
\[\mO_{0}=\{x\in X \ | \ \lim_{t\ra 0} t\cdot x\in\mO\},\ \ \
\mO_{\infty}=\{x\in X \ | \ \lim_{t\ra\infty} t\cdot x\in\mO\}\footnote{Here $t\in\bG_m$ and we embed $\bG_m$ into the projective line $\mathbb{P}^1$
so that $\mathbb{P}^1 - \bG_m=\{0,\infty\}$.}.\]
Then $\mO_0$ (resp. $\mO_\infty$) is a single $H_0$-orbit (resp. $H_\infty$-orbit), and
the correspondence $\mO_0\leftrightarrow\mO_\infty$ is a bijection between
$H_0$-orbits and $H_\infty$-orbits in $X$.

\end{itemize}

\end{definition}

We will denote by $(G , H_{0}, H_{\infty} , \gamma , X)$ a datum as above.

In analogy with theorem 5.3 of \cite{MUV}, we have:

\begin{thm}\label{Thm:AvIsEqui}
The adjunction between $\Av_{H_{\infty}}^{H_{0}}$ and $\av_{H_{0}}^{H_{\infty}}$ defines an equivalence between $D_{H_{0}}(X)$ and $D_{H_{\infty}}(X)$.
\end{thm}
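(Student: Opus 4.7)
The plan is to show that both the unit $\eta : \id_{D_{H_0}(X)} \to \av^{H_\infty}_{H_0} \circ \Av^{H_0}_{H_\infty}$ and the counit $\epsilon : \Av^{H_0}_{H_\infty} \circ \av^{H_\infty}_{H_0} \to \id_{D_{H_\infty}(X)}$ of the adjunction from lemma \ref{Av and oblv} are natural isomorphisms. The roles of $H_0$ and $H_\infty$ are symmetric, so I focus on $\eta$, the argument for $\epsilon$ being identical.

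First, I would check that both $\Av^{H_0}_{H_\infty}$ and $\av^{H_\infty}_{H_0}$ are $t$-exact up to cohomological shifts that are inverse to one another. Since $H_0/M$ and $H_\infty/M$ are connected unipotent, the forgetful functors $\oblv^{H_0}_M$ and $\oblv^{H_\infty}_M$ are fully faithful and $t$-exact, embedding $D_{H_0}(X)$ and $D_{H_\infty}(X)$ as full subcategories of $D_M(X)$. The inductions $\Ind^M_{H_\infty}$ and coinductions $\ind^M_{H_0}$ are realized as pushforwards along smooth morphisms between quotient stacks with fiber of common dimension $\dim(H_0/M)=\dim(H_\infty/M)$, hence $t$-exact up to the expected shift. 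Once $t$-exactness is in place, $\eta$ can be checked to be an isomorphism on simple objects of $P_{H_0}(X)$.

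By equivariant perverse sheaf theory, the simple objects of $P_{H_0}(X)$ are IC-sheaves $IC(\mO_0, \calL)$ attached to an $H_0$-orbit $\mO_0$ and an irreducible $H_0$-equivariant local system $\calL$ on $\mO_0$, and similarly for $H_\infty$. Using the last Matsuki-datum hypothesis together with transversality, for a Matsuki-paired couple $\mO_0 \leftrightarrow \mO_\infty$ the intersection $\mO_0 \cap \mO_\infty$ is a single $M$-orbit $\mO$ meeting both orbits transversally. Combined with the unipotence of $H_0/M$ and $H_\infty/M$, restriction yields canonical bijections between irreducible $H_0$-equivariant local systems on $\mO_0$, irreducible $M$-equivariant local systems on $\mO$, and irreducible $H_\infty$-equivariant local systems on $\mO_\infty$; write $\calL \leftrightarrow \calL'$ for the resulting correspondence. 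The key claim is that $\Av^{H_0}_{H_\infty}(IC(\mO_0, \calL)) \cong IC(\mO_\infty, \calL')$ and symmetrically $\av^{H_\infty}_{H_0}(IC(\mO_\infty, \calL')) \cong IC(\mO_0, \calL)$, up to the shifts from step one, which cancel in the composition. The unit then becomes the identity on each simple IC, and the equivalence follows.

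The main obstacle is this last IC-to-IC computation. I would use the $\bG_m$-action, which preserves both the $H_0$- and $H_\infty$-orbit stratifications and contracts $\mO_0$ to $\mO$ as $t \to 0$, to show that the support of $\Av^{H_0}_{H_\infty}(IC(\mO_0, \calL))$ is contained in $\overline{\mO_\infty}$ via an attractor/hyperbolic-localization argument. Transversality of $\mO_0$ and $\mO_\infty$ at the points of $\mO$ makes the natural action map $H_\infty \times^M \mO_0 \to X$ smooth over a neighborhood of $\mO_\infty$ in $\overline{\mO_\infty}$, so that smooth base change identifies the restriction of $\Av^{H_0}_{H_\infty}(IC(\mO_0, \calL))$ to $\mO_\infty$ with the local system $\calL'$ (up to the prescribed shift). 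Combined with the $t$-exactness from step one, the support condition and this restriction identify the averaged sheaf as the full IC extension $IC(\mO_\infty, \calL')$, completing the verification of the unit on simples.
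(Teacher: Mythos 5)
There is a genuine gap, and it occurs at the heart of your argument: the claims in your steps one and three about the individual averaging functors are false. Test them on the smallest instance of a Matsuki datum: $G=SL_2$, $X=\bP^1$, $H_0=N$, $H_\infty=\bar N$, $M=\{e\}$, $\gamma$ the positive coroot. The $N$-orbits are $\{\infty\}$ and $\bbA^1=\bP^1\setminus\{\infty\}$, the $\bar N$-orbits are $\{0\}$ and $\bP^1\setminus\{0\}$, and the Matsuki pairing sends the open $N$-orbit $\mO_0=\bbA^1$ to the closed $\bar N$-orbit $\mO_\infty=\{0\}$. Now $\Av^{N}_{\bar N}$ is, up to shift, $a_!p^*$ along $a,p:\bar N\times\bP^1\to\bP^1$, so $\Av^{N}_{\bar N}(\IC(\bbA^1))=\Av^{N}_{\bar N}(C_{\bP^1}[1])$ is a shift of $C_{\bP^1}$: its support is all of $\bP^1$, not $\overline{\mO_\infty}=\{0\}$, and it is not $\IC(\{0\})$ up to shift. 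Likewise $\Av^{N}_{\bar N}(\IC(\{\infty\}))$ is a shift of $j_!C_{\bP^1\setminus\{0\}}$ ($j$ the open inclusion), a standard object rather than an intersection complex, and it sits in a perverse degree differing by one from that of $\Av^{N}_{\bar N}(\IC(\bbA^1))$ — so $\Av^{H_0}_{H_\infty}$ is not $t$-exact up to a single shift either. The underlying misconception is that the contraction intuition ("$\mO_0$ flows to $\mO$, hence the average lands on $\overline{\mO_\infty}$") applies to the $!$-averaging functor; it does not, because $a_!$ spreads support over $H_\infty\cdot\overline{\mO_0}$, which is typically all of $X$. That intuition is correct for the nearby-cycles/limit functor $L_\gamma$, and the statement that $\av^{H_\infty}_{H_0}\circ\Av^{K}_{H_\infty}$ agrees with $L_\gamma$ is precisely theorem \ref{Thm:FormulaMatsuki}, which is proved \emph{after} (and using) the present theorem — so it cannot be fed back into its proof. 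Your reduction to simples also silently assumes that a unit map between isomorphic simples is an isomorphism, which requires a separate nonvanishing argument, but that is a secondary issue.

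The paper's proof avoids any computation of $\Av$ or $\av$ on particular objects. It checks that the unit $\id\to\av^{H_\infty}_{H_0}\Av^{H_0}_{H_\infty}$ becomes an isomorphism after applying, for each $M$-orbit $\mO\subset X^{\bG_m}$, the hyperbolic localization functor $Loc_{\mO}$; these functors are jointly conservative. Braden's theorem provides four descriptions $i_0^*j_0^!\cong(\pi_0)_*j_0^!\cong(\pi_\infty)_!j_\infty^*\cong i_\infty^!j_\infty^*$ of $Loc_{\mO}$ on weakly monodromic objects, and the two one-sided descriptions $(\pi_\infty)_!j_\infty^*$ and $(\pi_0)_*j_0^!$ make the unit of $(\Ind^M_{H_\infty},\oblv)$ and the counit of $(\oblv,\ind^M_{H_0})$ visibly invertible after localization. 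If you want to salvage a stratum-by-stratum approach, that is the mechanism you need: localize the \emph{morphism} at the fixed-point orbits rather than trying to identify the image of each $\IC$ sheaf.
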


\begin{proof}
The following proof was hinted to us by D. Gaitsgory. It uses Braden's hyperbolic localization theorem; for details see \cite{B}, \cite{DG2}. We want to show that the unit morphism $$ id \to av^{H_{\infty}}_{H_0} Av^{H_0}_{H_{\infty}}$$ is an isomorphism (the counit morphism is dealt with analogously). Fixing an $M$-orbit $\calO$ in $X^{\bbG_m}$, we consider the diagram $$ \xymatrix{\calO \ar@/_0.5pc/[r]_{i_{\infty}} \ar@/^0.5pc/[d]^{i_0} & \calO_{\infty} \ar[d]^{j_{\infty}} \ar@/_0.5pc/[l]_{\pi_{\infty}} \\ \calO_0 \ar[r]^{j_0} \ar@/^0.5pc/[u]^{\pi_0} & X}$$ (where $i,j$ are the inclusions maps and $\pi$ are the contraction maps). Then by Braden's theorem, on the subcategory $D_{\bbG_m-wm}(X) \subset D(X)$, one has isomoprhisms of functors $$ i_0^* j_0^! \cong (\pi_0)_*  j_0^! \cong (\pi_{\infty})_!  j_{\infty}^* \cong i_{\infty}^! j_{\infty}^*;$$ Let us denote them by $Loc_{\calO}$.

We will omit a lot of forgetful functors in what follows, to make it more readable; The reader should be able to figure them out. To show that our unit map is an isomorphism, it is enough to show that it is so after application of $Loc_{\calO}$, for every $\calO$. We have a commutative diagram $$ \xymatrix{id \ar[r] \ar[rd] & av^{H_{\infty}}_{H_0}\circ Av^{H_0}_{H_{\infty}} \ar[d] \\ & Av^{H_0}_{H_{\infty}}}.$$ Thus, to show that the upper arrow is an isomorphism after application of $Loc_{\calO}$, it is enough to show that the other two are so. We are reduced to showing that $id \to oblv^{H_{\infty}}_M \circ Ind^M_{H_{\infty}}$ and $oblv^{H_0}_M \circ ind^M_{H_0}\to id $ are isomorphisms after application of $Loc_{\calO}$, on $\bbG_m$-weakly-monodromic objects. For the first morphism it is easy using the third description of $Loc_{\calO}$, while for the second morphism it is easy using the second description of $Loc_{\calO}$.

\end{proof}

Considering the constant limit scheme associated to the $\bG_m$-variety $X$, and the $M$-action commuting with the $\bG_m$-action, we obtain a limit functor $$L_{\gamma} : D_M(X) \to D_M(X)$$ as in remark \ref{Rem:EquiLimFuncComm}.

\begin{definition}

A subgroup $K \subset G$ is called \emph{adapted} to the Matsuki datum, if $K \cap H_0 = K \cap H_{\infty} = M$, $K_0 = H_0$ (for the notation $K_0$ see \S\ref{wonderful deg}), $K$ has finitely many orbits on $X$, and each $K$-orbit is transversal to each $H_{\infty}$-orbit in $X$.

\end{definition}

\begin{remark}\label{Rem:AdaptedIsLimitGood}
If $K$ is adapted to the Matsuki datum, then (by remark \ref{Remark_MeaningOfTransversaility}) the actions of $\wt{K}$ and $\wt{H_{\infty}}$ on $\wt{X}$ are transversal. Here, $\wt{X}$ is the constant limit scheme associated to $X$, and $\wt{K} , \wt{H_{\infty}}$ are as in \S\ref{wonderful deg} (notice that $\wt{H_{\infty}}$ is just constant, since $Im(\gamma)$ normalizes $H_{\infty}$).
\end{remark}

From now on, let $K \subset G$ be a subgroup adapted to the Matsuki datum. Recall that the limit functor $L_{\gamma} : D_M (X) \to D_M (X)$ from above lifts to a limit functor $$ L_{\gamma} : D_K (X) \to D_{H_0} (X).$$

\begin{thm}\label{Thm:FormulaMatsuki}
Suppose that $X$ is proper. Then one has an isomorphism of functors $D_K(X) \to D_{H_{0}}(X)$:
\[ L_{\gamma}  \is\av_{H_{0}}^{H_{\infty}} \circ\Av_{H_{\infty}}^{K} .\]
\end{thm}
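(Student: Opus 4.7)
The plan is to combine the transversal fully faithfulness theorem (Theorem \ref{TransRigidEqui}), the Matsuki equivalence (Theorem \ref{Thm:AvIsEqui}), and the standard adjunctions of \S\ref{averaging functor}, and to extract the desired isomorphism via a Yoneda-type argument. The key observation is that the transversal full faithfulness of $L_\gamma$ on pairs $(\calF,\calG)$ with $\calF \in D_K(X)$ and $\calG \in D_{H_\infty}(X)$, together with $L_\gamma \calG \cong \calG$ for such $\calG$, converts (after adjunction) into a universal property corepresented on one side by $L_\gamma \calF$ and on the other by $\av_{H_0}^{H_\infty}\Av_{H_\infty}^K \calF$.

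First, I would verify the hypotheses of Theorem \ref{TransRigidEqui} with $L := H_\infty$: the adapted condition gives $K \cap H_\infty = M$ with $M$ centralizing $\on{Im}(\gamma)$, transversality of the $\widetilde{K}$- and $\widetilde{H_\infty}$-actions on $\widetilde{X}$ is Remark \ref{Rem:AdaptedIsLimitGood}, and properness of the structural morphism $\widetilde{X} \to \bbA^1$ follows from properness of $X$. The theorem then yields, for $\calF \in D_K(X)$ and $\calG \in D_{H_\infty}(X)$, a natural isomorphism $\Hom_{D_M(X)}(\calF,\calG) \xrightarrow{\sim} \Hom_{D_M(X)}(L_\gamma \calF, L_\gamma \calG)$. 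Next, I would check that $L_\gamma \calG \cong \calG$ for $\calG \in D_{H_\infty}(X)$: since $\on{Im}(\gamma)$ normalizes $H_\infty$, the $\bG_m$-action on $X$ factors as $\bG_m \times X \xrightarrow{\gamma \times \on{id}} H_\infty \times X \xrightarrow{\on{act}} X$, so $H_\infty$-equivariance pulls back to $\bG_m$-weak equivariance, placing $\calG$ in $D_{\bG_m-wm}(X)$, and Lemma \ref{Remark_LimitOfWM} gives the claim. Combining, $\Hom_{D_M(X)}(\calF,\calG) \cong \Hom_{D_M(X)}(L_\gamma \calF, \calG)$.

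Now I would apply the basic adjunctions $\Ind_{H_\infty}^M \dashv \oblv_M^{H_\infty}$ and $\oblv_M^{H_0} \dashv \ind_{H_0}^M$ of \S\ref{averaging functor} (using $L_\gamma \calF \in D_{H_0}(X)$, since $K_0 = H_0$, cf.\ Construction-Lemma \ref{LemmaEquiLift}) to rewrite the last isomorphism as
\[
\Hom_{D_{H_\infty}(X)}(\Av_{H_\infty}^K \calF,\calG) \;\cong\; \Hom_{D_{H_0}(X)}(L_\gamma \calF,\av_{H_0}^{H_\infty}\calG).
\]
By Theorem \ref{Thm:AvIsEqui}, $\av_{H_0}^{H_\infty}$ is an equivalence $D_{H_\infty}(X) \xrightarrow{\sim} D_{H_0}(X)$, so its full faithfulness identifies the left-hand side with $\Hom_{D_{H_0}(X)}(\av_{H_0}^{H_\infty}\Av_{H_\infty}^K \calF,\av_{H_0}^{H_\infty}\calG)$. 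Since $\av_{H_0}^{H_\infty}\calG$ ranges over all of $D_{H_0}(X)$ as $\calG$ varies, the Yoneda lemma produces the required $L_\gamma \calF \cong \av_{H_0}^{H_\infty}\Av_{H_\infty}^K \calF$, naturally in $\calF$.

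The substantive content is Theorem \ref{TransRigidEqui}, which I take as given. In the remaining argument the main point to watch is naturality: one must confirm that the isomorphism $L_\gamma \calG \cong \calG$ is the canonical one from Lemma \ref{Remark_LimitOfWM}, and that its interaction with the transversal-full-faithfulness map is compatible with all the adjunctions, so that the final Yoneda step delivers a natural isomorphism of functors rather than just an object-wise one.
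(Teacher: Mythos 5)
Your proposal follows the paper's own proof almost step for step: verify the hypotheses of theorem \ref{TransRigidEqui} for the pair $(K,H_\infty)$, use it together with the triviality of $L_\gamma$ on $H_\infty$-equivariant objects (lemma \ref{Remark_LimitOfWM}) to convert $\Hom_{D_M}(\calF,\calG)\cong\Hom_{D_M}(L_\gamma\calF,\calG)$ into an identity of corepresented functors via the adjunctions of \S\ref{averaging functor}, and finish with theorem \ref{Thm:AvIsEqui} and Yoneda. The only cosmetic difference is that the paper transposes everything to $D_{H_\infty}(X)$ (obtaining $\Av^{H_0}_{H_\infty}\circ L_\gamma\cong\Av^K_{H_\infty}$ and then applying $\av^{H_\infty}_{H_0}$), whereas you transpose to $D_{H_0}(X)$ using the right adjoint and the essential surjectivity of $\av^{H_\infty}_{H_0}$; these are equivalent bookkeeping choices.

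One step is mis-justified. To show $L_\gamma\calG\cong\calG$ for $\calG\in D_{H_\infty}(X)$ you claim the $\bG_m$-action factors as $\bG_m\times X\xrightarrow{\gamma\times\id}H_\infty\times X\to X$. That requires $\on{Im}(\gamma)\subset H_\infty$, which does not follow from $\on{Im}(\gamma)$ normalizing $H_\infty$ and is false in the motivating example ($H_\infty=M\bar N$, $\gamma$ a nontrivial cocharacter of the $\theta$-split torus $A$, and $A\cap M\bar N$ finite). The conclusion you need — that $H_\infty$-equivariant objects are $\bG_m$-weakly-monodromic — is still true, but for a different reason: the Matsuki axiom that the $\bG_m$-action preserves each $H_\infty$-orbit, together with connectedness of $\bG_m$ and the discreteness of the classification of $H_\infty$-equivariant objects on a single orbit, forces $a^*\calG\cong p^*\calG$. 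This is exactly the point the paper disposes of with the phrase ``noticing that $H_\infty$-equivariant sheaves are $\bG_m$-weakly-monodromic''; your argument for it should be replaced, but the rest of the proof stands.
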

\begin{proof}
Consider $\calF \in D_K(X)$ and $\calG \in D_{H_{\infty}}(X)$. We have:
\beqn
\xymatrix@C=1em{
Hom_{D_{H_{\infty}}(X)} (\Av^{K}_{H_{\infty}} \calF , \calG) \ar[r]^-{\sim} & Hom_{D_{M}(X)} ( \calF , \calG) \ar[r]^-{\sim} & Hom_{D_{M}(X)} (L_{\gamma} \calF, L_{\gamma} \calG) \ar[r]^-{\sim} &
}
\eeqn
\beqn
\xymatrix@C=1em{
 \ar[r]^-{\sim} & Hom_{D_{M}(X)} (L_{\gamma} \calF, \calG) \ar[r]^-{\sim} & Hom_{D_{H_{\infty}}(X)} (\Av^{H_{0}}_{H_{\infty}} L_{\gamma} \calF , \calG )
}
\eeqn

Here, the first isomorphism is by adjunction, the second isomorphism is by theorem \ref{TransRigidEqui}, the third isomorphism is due to remark \ref{Remark_LimitOfWM} and noticing that $H_{\infty}$-equivariant sheaves are $\bbG_m$-weakly-monodromic, and the fourth isomorphism is by adjunction again.

From this, by Yoneda lemma, we obtain an isomorphism of functors $D_K(X) \to D_{H_{\infty}}(X)$:
$$
\Av^{H_{0}}_{H_{\infty}} \circ L_{\gamma} \cong\Av^{K}_{H_{\infty}}.
$$

Applying $\av_{H_{0}}^{H_{\infty}} \circ \cdot$ to both sides, and taking into account theorem \ref{Thm:AvIsEqui}, we obtain an isomorphism of functors $D_K(X) \to D_{H_{0}}(X)$:

$$
L_{\gamma} \cong\av_{H_{0}}^{H_{\infty}} \circ\Av^{M}_{H_{\infty}}.
$$

\end{proof}
\begin{corollary}\label{adjoint}
The functor $L_{\gamma}$ admits both a left and a right adjoint.
\end{corollary}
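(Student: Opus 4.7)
The corollary will follow by combining the formula of Theorem~\ref{Thm:FormulaMatsuki} with the equivalence of Theorem~\ref{Thm:AvIsEqui} to produce the right adjoint, and then invoking the Verdier-duality compatibility of $L_\gamma$ from Lemma~\ref{LimFunBasicProp}(1) to pass from the right adjoint to the left one.

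For the right adjoint, I would start from the formula
\[ L_\gamma \cong \av_{H_0}^{H_\infty} \circ \Av_{H_\infty}^K \]
of Theorem~\ref{Thm:FormulaMatsuki}. By Lemma~\ref{Av and oblv}, the right adjoint of $\Av_{H_\infty}^K$ is $\av_K^{H_\infty}$. By Theorem~\ref{Thm:AvIsEqui}, $\av_{H_0}^{H_\infty}$ is an equivalence whose inverse---hence both its left and right adjoint---is $\Av_{H_\infty}^{H_0}$. Taking the right adjoint of the composition therefore yields
\[ L_\gamma^R \cong \av_K^{H_\infty} \circ \Av_{H_\infty}^{H_0} : D_{H_0}(X) \to D_K(X). \]

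For the left adjoint, I would use that $L_\gamma$ commutes with Verdier duality, $L_\gamma \circ \bbD \cong \bbD \circ L_\gamma$, by Lemma~\ref{LimFunBasicProp}(1). Since $\bbD$ is an anti-equivalence of order two, the existence of a right adjoint formally yields a left adjoint via $L_\gamma^L \cong \bbD \circ L_\gamma^R \circ \bbD$, as confirmed by the chain
\[ \Hom(\bbD L_\gamma^R \bbD \calG, \calF) \cong \Hom(\bbD \calF, L_\gamma^R \bbD \calG) \cong \Hom(L_\gamma \bbD \calF, \bbD \calG) \cong \Hom(\bbD L_\gamma \calF, \bbD \calG) \cong \Hom(\calG, L_\gamma \calF). \]
No step presents a substantive obstacle: all the nontrivial work (the formula, the Matsuki-type equivalence, and the Verdier-duality compatibility) has already been carried out, so the corollary is a purely formal consequence. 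If one wishes to simplify further, using that $\bbD$ interchanges $!$- and $*$-push-forwards---hence interchanges $\Av$ and $\av$---one can rewrite the left adjoint as $L_\gamma^L \cong \Av_K^{H_\infty} \circ \av_{H_\infty}^{H_0}$, exhibiting a pleasing duality with the formula for $L_\gamma^R$.
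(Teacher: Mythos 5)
Your proof is correct and follows essentially the same route as the paper: both obtain the right adjoint from the formula $L_\gamma\cong\av_{H_0}^{H_\infty}\circ\Av_{H_\infty}^K$ together with the equivalence of Theorem~\ref{Thm:AvIsEqui}, and both use commutation with Verdier duality for the left adjoint (the paper rewrites $L_\gamma\cong\Av_{H_0}^{H_\infty}\circ\av_{H_\infty}^{K}$ and reads off the left adjoint, which is formally the same as your conjugation $L_\gamma^L\cong\bbD\circ L_\gamma^R\circ\bbD$). Your version has the small added value of recording explicit formulas for both adjoints.
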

\begin{proof}
The fact that $L_{\gamma}$ commutes with Verdier duality implies  
\[L_{\gamma}\is\av_{H_{0}}^{H_{\infty}} \circ\Av^{M}_{H_{\infty}}\is
\Av_{H_{0}}^{H_{\infty}} \circ\av^{M}_{H_{\infty}}.\]
Since $\av_{H_{0}}^{H_{\infty}}$, $\Av_{H_{0}}^{H_{\infty}}$
are equivalences of categories (by theorem \ref{Thm:AvIsEqui}) and $\Av^{M}_{H_{\infty}}$ (resp. $\av^{M}_{H_{\infty}}$)
admits a right adjoint (resp. left adjoint), the corollary follows.
\end{proof}


\section{Casselman-Jacquet functor}\label{Sec:CJFunctor}

In this section we obtain a formula, which describes the geometric Jacquet functor of \cite{ENV} as a composition of two averaging functors.

Suppose that we are in the context of \S\ref{symmetric spaces}.

\begin{lemma}\label{Lem:FlagIsMatsuki}
The datum $(G , MN , M\bar N , \gamma , \mB)$ is a Matsuki datum. The subgroup $K \subset G$ is adapted to this Matsuki datum.
\end{lemma}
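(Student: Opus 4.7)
The plan is to verify, bullet by bullet, each condition of Definition~\ref{Matsuki datum} for the tuple $(G, MN, M\bar N, \gamma, \mB)$ and each condition of adaptedness of $K$; the algebraic conditions come directly from the Langlands decomposition $P = MAN$ (with opposite $\bar P = MA\bar N$), while the geometric conditions are supplied by a Bia\l{}ynicki-Birula analysis on $\mB$ and classical Matsuki theory.

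For the algebraic conditions: $MN \cap M\bar N = M$ holds because $N \cap \bar N = \{e\}$ (opposite unipotent radicals of $P$ and $\bar P$); the quotients $MN/M \cong N$ and $M\bar N/M \cong \bar N$ are connected unipotent; and since $\gamma(\bG_m) \subset A \subset Z(L)$ with $L = MA$, the image of $\gamma$ centralizes $M$ and normalizes both $N$ and $\bar N$ (the latter from the parabolic structures). In particular, the induced $\bG_m$-action on $\mB$ preserves both $MN$- and $M\bar N$-orbits.

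For the geometric conditions: the transversality of $MN$- and $M\bar N$-orbits on $\mB$ is the infinitesimal statement $\fm + \fn + \bar\fn + \fb_x = \fg$ at every $x \in \mB$, which follows from $\fg = \fm \oplus \fa \oplus \fn \oplus \bar\fn$ together with the observation that every Borel subalgebra of $\fg$ contains a Cartan that projects surjectively onto the $\fa$-summand. Finiteness of $M$-orbits on $\mB^{\bG_m}$ follows from $\gamma$ being regular in $A$: since $\gamma$ is nonzero on every root of $N$, $Z_G(\gamma(\bG_m)) = Z_G(A) = L$, so $\mB^{\bG_m}$ identifies with the set of Borels of $G$ containing $A$ (a finite disjoint union of flag varieties of $L$), and $M$ acts on it with finitely many orbits. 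The identifications $\mO_0 = H_0 \cdot \mO$, $\mO_\infty = H_\infty \cdot \mO$ and the resulting bijection $\mO_0 \leftrightarrow \mO_\infty$ come from the Bia\l{}ynicki-Birula decomposition of $\mB$ under $\gamma$: at $x_0 \in \mO$ the $\gamma$-weight splitting of $T_{x_0}\mB$ into positive and negative parts matches (via the Langlands decomposition) the infinitesimal directions of the relevant subgroup orbits through $x_0$.

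For the adaptedness of $K$: both intersections $K \cap MN$ and $K \cap M\bar N$ reduce to $M$ via a $\theta$-invariance argument---if $mn \in K$ is $\theta$-fixed then $m\theta(n) = mn$ forces $\theta(n) = n$, but $\theta(n) \in \bar N$ and $N \cap \bar N = \{e\}$, so $n = e$. The equality $K_0 = MN$ is recorded in \S\ref{wonderful deg}; the finiteness of $K$-orbits on $\mB$ is Matsuki's classical theorem; and the transversality of $K$- and $M\bar N$-orbits on $\mB$ is a standard fact in Matsuki theory. The one genuinely technical point is the Bia\l{}ynicki-Birula identification of $\mO_0$ and $\mO_\infty$ with single orbits; everything else reduces to direct bookkeeping with the Langlands decomposition and the $\theta$-involution.
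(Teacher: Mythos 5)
Your overall plan (verify the bullets of Definition \ref{Matsuki datum} one by one, then the adaptedness conditions) matches the paper's, and several of your verifications are fine: the algebraic conditions, $K\cap MN=K\cap M\bar N=M$ via the $\theta$-argument, and the finiteness of $M$-orbits on $\mB^{\bG_m}$ via $Z_G(\operatorname{Im}\gamma)=L$. But there is a genuine gap at the heart of the geometric part. The paper's proof turns on one key reduction that you never make: by the Bruhat decomposition every $MN$-orbit (and every $M\bar N$-orbit) on $\mB$ contains a $T$-fixed point, hence \emph{$MN$-orbits coincide with $P$-orbits and $M\bar N$-orbits with $\bar P$-orbits}. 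Once this is known, transversality is immediate from $\fp+\bar{\fp}=\fg$ (the orbit tangent spaces are $(\fp+\fb_x)/\fb_x$ and $(\bar{\fp}+\fb_x)/\fb_x$), the $\bG_m$-action preserves each orbit because $\operatorname{Im}\gamma\subset A\subset P$, and the attracting-set statements follow from the explicit limit $\lim_{t\to 0}\gamma(t)nmx=mx$ for $x\in\mB^T$. Your substitute argument for transversality --- that $\fm+\fn+\bar{\fn}+\fb_x=\fg$ because ``every Borel subalgebra contains a Cartan that projects surjectively onto the $\fa$-summand'' --- is unjustified and, as stated, dubious. The projection is along $\fm\oplus\fn\oplus\bar{\fn}$, and a Cartan can easily land inside the kernel: for $G=\SL_2$ with $\theta$ conjugation by $\operatorname{diag}(1,-1)$, one has $\fa=k\cdot\left(\begin{smallmatrix}0&1\\1&0\end{smallmatrix}\right)$ and the standard Cartan $k\cdot\operatorname{diag}(1,-1)$ lies in $\fn\oplus\bar{\fn}$, so it projects to $0$. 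Proving that \emph{some} Cartan in each Borel surjects onto $\fa$ is essentially equivalent to the transversality statement itself (both say $\fb_x+\fm+\fn+\bar{\fn}=\fg$), so your argument is circular or at best incomplete.

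Two further consequences of the missing reduction. First, ``$\operatorname{Im}\gamma$ normalizes $MN$'' only shows that $\bG_m$ \emph{permutes} the $MN$-orbits, not that it preserves each one; and you cannot invoke connectedness plus finiteness, because finiteness of the set of $MN$-orbits is itself not known until they are identified with $P$-orbits ($MN$ is normal in $P$ with torus quotient, so a priori a single $P$-orbit could break into infinitely many $MN$-orbits). Second, your Bia\l{}ynicki-Birula argument for ``$\mO_0$ is a single $H_0$-orbit'' only matches tangent spaces at a base point; to conclude that the attracting set is one orbit you still need that $H_0$ stabilizes $\mO_0$ and an explicit identification such as the paper's limit computation. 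For the adaptedness conditions, citing Matsuki's finiteness theorem is acceptable, though the paper gets both the finiteness of $K$-orbits and the $K$-vs-$M\bar N$ transversality more directly from the Iwasawa decomposition $\fg=\fk+\fa+\bar{\fn}$ combined with $M\bar N\cdot x=\bar P\cdot x$. I recommend you restructure the proof around the identification of $MN$-orbits with $P$-orbits; everything else then falls out as in the paper.
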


\begin{proof}

Choosing a maximal torus $A \subset T \subset P$, the set of $T$-fixed points $\mB^T$ is finite, and by Bruhat decomposition, any $MN$-orbit (and $M \bar N$-orbit) contains one of them. From this one sees that $MN$-orbits coincide with $MAN=P$-orbits in $\mB$. Also, we easily see that $\mB^{\gamma} = M \cdot \mB^T$ ($\mB^{\gamma}$ denotes the set of points in $\mB$ fixed by $Im(\gamma)$). Finally, for $x \in \mB^T$, $m \in M$ and $n \in N$, one has $\lim_{t \to 0} \gamma(t) nmx = mx$ (and analogously for $\bar N$).

From these remarks, it is easy to verify the six demands of \ref{Matsuki datum}.

To show that $K$ is adapted to the Matsuki datum; That $K_0 = MN$ was recalled in \S\ref{wonderful deg}. The last two requirements in the definition of an adapted subgroup follow from the Iwasawa decomposition.

\end{proof}

Thus, we obtain the following theorem, as a consequence of theorem \ref{Thm:FormulaMatsuki} and corollary \ref{adjoint}:

\begin{thm}\label{FormulaForJC}
\
\begin{enumerate}
\item
One has the following isomorphism of functors $D_K(\mB) \to D_{MN}(\mB)$:
\[L_{\gamma} \cong\av^{M\bar N}_{MN} \circ\Av^K_{M\bar N}.\]
\item
The geometric Jacquet functor $L_{\gamma}$ admits both a
left and a right adjoint.
\end{enumerate}
\end{thm}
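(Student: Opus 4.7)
The plan is to deduce the theorem directly from the general results of Section~\ref{Matsuki setting}, once we verify that the flag-variety setup fits into the Matsuki framework. Concretely, I would identify the data $(G, H_0, H_\infty, \gamma, X) = (G, MN, M\bar N, \gamma, \mB)$ and take $K \subset G$ to be the symmetric subgroup of \S\ref{symmetric spaces}, and show that this is a Matsuki datum with $K$ adapted. This verification is exactly the content of Lemma~\ref{Lem:FlagIsMatsuki}, which has already been proved in the excerpt; note in particular that $M = MN \cap M\bar N$, that $MN/M$ and $M\bar N /M$ are unipotent, that $Im(\gamma) \subset A$ normalizes $MN$ and $M\bar N$ and centralizes $M$, and that $K_0 = MN$ was established in \S\ref{wonderful deg}. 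The flag variety $\mB$ is projective, hence proper, so the hypothesis of Theorem~\ref{Thm:FormulaMatsuki} is satisfied.

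For part (1), I would simply invoke Theorem~\ref{Thm:FormulaMatsuki} with these identifications: it produces an isomorphism of functors $D_K(\mB) \to D_{MN}(\mB)$,
\[
L_\gamma \;\cong\; \av^{M\bar N}_{MN} \circ \Av^K_{M\bar N},
\]
which is exactly the desired formula. Here the role of $H_0 = MN$ appears in the target, and $H_\infty = M\bar N$ is the intermediate subgroup through which averaging passes.

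For part (2), I would invoke Corollary~\ref{adjoint}, which in this same setup asserts that $L_\gamma$ admits both a left and a right adjoint; this follows from combining the formula of part (1) with its Verdier-dual version $L_\gamma \cong \Av^{M\bar N}_{MN} \circ \av^K_{M\bar N}$ and the fact that $\Av^{M\bar N}_{MN}$ and $\av^{M\bar N}_{MN}$ are equivalences (Theorem~\ref{Thm:AvIsEqui}), while the averaging functors to/from $M$-equivariance always have adjoints on the other side.

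There is essentially no new obstacle: the work has already been done in the preceding sections, and the only real task is the verification of the Matsuki-datum axioms and the adapted-subgroup conditions for the flag variety, which reduces to standard facts (Bruhat decomposition describing $P$-orbits and $\bar P$-orbits on $\mB$, identification $\mB^{\gamma} = M \cdot \mB^T$, the contraction $\lim_{t\to 0}\gamma(t) n m x = mx$ for $x \in \mB^T$, and the Iwasawa decomposition giving finiteness and transversality of $K$-orbits against $M\bar N$-orbits). The mildly subtle point worth double-checking is that $X = \mB$ is the correct space on which to run the machinery --- i.e., that the Matsuki-datum transversality, which is a statement about $H_0$-orbits versus $H_\infty$-orbits on $\mB$, holds; this is the classical Matsuki correspondence/transversality on the flag variety, and is the statement that underlies the equivalence in Theorem~\ref{Thm:AvIsEqui} applied here.
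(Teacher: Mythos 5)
Your proposal is correct and follows exactly the route the paper takes: the paper proves Lemma~\ref{Lem:FlagIsMatsuki} (that $(G,MN,M\bar N,\gamma,\mB)$ is a Matsuki datum with $K$ adapted, using Bruhat decomposition, the contraction $\lim_{t\to 0}\gamma(t)nmx=mx$, and the Iwasawa decomposition) and then deduces the theorem immediately from Theorem~\ref{Thm:FormulaMatsuki} and Corollary~\ref{adjoint}. No substantive differences.
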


\begin{remark}
Here, $L_{\gamma}$ can be understood in two isomorphic ways. The first, as a composition of $\oblv^K_M$ with the limit functor $L_{\gamma}: D_M(\mB) \to D_M(\mB)$ (and the image in fact lies in the full subcategory $D_{MN}(\mB)$) - this is the approach of \S\ref{Matsuki datum}. The second, as the equivariant limit functor $D_K(\mB) \to D_{MN}(\mB)$ arising from construction-lemma \ref{LemmaEquiLift}.
\end{remark}

\quash{\begin{remark}
One has an analogous monodromic version of theorem \ref{FormulaForJC}. See appendix \ref{Monodromic sheaves}.
\end{remark}}

\quash{
Let us fix a co-character $\gamma:\bG_m\ra A$ which is positive on roots of $N$.
Consider the $\bG_m$-action on $\mB$ defined as the composition of
the action of $A$ on $\mtB$ with $\gamma$. Consider the
corresponding constant limit scheme $f:\bA^1\times\mB\ra\bA^1$ (see Example \ref{constant limit}).
\begin{lemma}\label{EX of limit ST}
Let $S$ (resp. $S_0$) be the stratification of $\mB$ given by $K$-orbits on $\mB$ (resp. $MN$-orbits on $\mB$). Then $S_0$ is a \emph{limit stratification} of $S$ (see definition \ref{Def:LimStrat2}).
\end{lemma}
\begin{proof}
Let $\alpha:\bG_m\times\mB\ra\mB$ be the action map. Then we have to show that
$S^0:=\alpha^{-1}S$ and $S_0$ together forms a stratification of $\bA^1\times \mB$.
Consider the quotient map $q:\bA^1\times G\ra\bA^1\times\mB$. Since $q$ is smooth, it is enough to
show that
$q^{-1}S^0$ and $q^{-1}S_0$ forms a stratification of $\bA^1\times G$. But it is clear, because
the partition $q^{-1}S^0$, $q^{-1}S_0$ is equal to the pull back of the $(\bG_m\times B)$-orbits stratification of
the wonderful degeneration $\tX=\tK\backslash\tG$ along the quotient map
$\bA^1\times G=\tG\ra \tX$.

\end{proof}

\begin{proof}
Part 1) is standard. For part 2), we give a proof for $N$-orbits, the proof for $\bar N$-orbits is similar.
Let $T_K^*(\mB)$ (resp. $T^*_{N}(\mB)$) be the conormal bundle of
$K$-orbits (resp. {N}-orbits). We need to show that
$T_K^*(\mB)\cap T_{N}^*(\mB)\subset T^*(\mB)$ is equal to the zero section $\mB$.
For this, let $m:T^*(\mB)\ra\fg^*\is\fg$ be the moment map for the $G$-action
(here we identify $\fg^*$ with $\fg$ using $\langle,\rangle$).
We have $m(T_K^*(\mB))=\fk^\perp\cap\fg_{nil}=\frak s\cap\fg_{nil}$,
$m(T_{N}^*(\mB))=\fn^\perp\cap\fg_{nil}=(\fm+\fa+\fn)\cap\fg_{nil}\subset\fm+\fn$.
Since $\frak s\cap (\fm+\fn)=0$, it implies $T_K^*(\mB)\cap T_{N}^*(\mB)\subset m^{-1}(0)=\mB$.
The proposition follows.
\end{proof}

\subsection {}

Recall the notations and conventions about averaging functors - \S\ref{NotationAv}.

\begin{lemma}\label{AvAreInverse}
\
\begin{enumerate}
\item The adjunction
\[Av_{MN}^{M\bar{N}}:\quad D_{M\bar{N}}(\mtB)_{um} \rightleftarrows D_{MN}(\mtB)_{um} \quad: av^{MN}_{M\bar{N}}.\]
is an equivalence.
\item The adjunction
$$ Av_{MN}^{M\bar N}:\quad D_{M\bar{N}\rtimes\fa}(G) \rightleftarrows D_{MN\rtimes\fa}(G) \quad: av^{MN}_{M\bar{N}}.$$
is an equivalence.
\end{enumerate}
\end{lemma}
}

\quash{
\subsection {The Main formula}
Now, let us
fix a co-character $\gamma:\bG_m\ra A$ which is positive on roots of $N$.
Then the $\bG_m$ action
$\alpha:\bG_m\times\mtB\ra\mtB$
defined as the composition of
the action of $G$ on $\mtB$ with $\gamma$, makes
the 5-tuple $(G\times\ft, \mtB, MN\times\ft,M\bar N\times\ft,\alpha)$ into a
Matsuki datum (see, e.g., [MNV, \S5]).

As in [...], we have the limit functor $L_\alpha : D_K(\mtB)_{um} \to D_{MN}(\mtB)_{um}$. The following is one of the two main theorems of this article:

\begin{thm}

We have an isomorphism of functors
$D_K(\mtB)_{um} \to D_{MN}(\mtB)_{um}$:
$$ L_{\alpha} \cong av^{M\bar N}_{MN} \circ Av^K_{M\bar N}.$$

\end{thm}

\begin{corollary}
The functor $av^{M\bar N}_{MN} \circ Av^K_{M\bar N}$, a composition of two non-t-exact functors, is t-exact (w.r.t the perverse t-structure).
\end{corollary}
}

\quash{

\begin{proof}(of theorem \ref{FormulaForJC}) Consider $\calF \in D_K(\mtB)_{um}$ and $\calG \in D_{M\bar N}(\mtB)_{um}$. We have (we abberviate $"H_{G}" := "Hom_{D_G(\mtB)}"$ for readability):

$$
\xymatrix@C=1em{
H_{M\bar N} (Av^{K}_{M\bar N} \calF , \calG) \ar[r]^{\ \ \ \ \ \ \sim} & H_M ( \niceF , \niceG) \ar[r] & H_M (L_{\alpha} \niceF, L_{\alpha} \niceG) \ar[r]^{\sim} & H_M (L_{\alpha} \niceF, \niceG) \ar[r]^{\sim\ \ \ \ \ \ \ } & H_{M\bar N} (Av^{MN}_{M\bar N}\circ L_{\alpha} \niceF , \niceG )
}
$$

Here, the first and last isomorphisms are by adjunction, and the remaining one is by \ref{LimitFixed}. By Yoneda lemma, we get a morphism of functors $D_K(\mtB)_{um} \to D_{M\bar N}(\mtB)_{um}$:

\beq\label{second arrow}
Av^{MN}_{M\bar N} \circ L_f \ra Av^{K}_{M\bar N}.
\eeq

We claim that above morphism is an isomorphism. Would $H_M ( \niceF , \niceG) \to H_M (L_f \niceF, L_f \niceG)$ be an isomorphism, this would follow. However, since theorem \ref{TransRigid} is proved only in the non-equivariant case, we will apply forgetful functors to deduce the claim.

To see this, we consider the following diagram:

$$
\xymatrix@C=1em{
H_{M\bar N} (Av^{K}_{M\bar N} \calF , \calG) \ar[r]\ar[d] & H_M ( \niceF , \niceG) \ar[r]\ar[d] & H_M (L_{\alpha} \niceF, L_{\alpha} \niceG) \ar[r]\ar[d] & H_M (L_{\alpha} \niceF, \niceG) \ar[r]\ar[d] & H_{M\bar N} (Av^{MN}_{M\bar N}\circ L_{\alpha} \niceF , \niceG ) \ar[d]
\\
H_{\bar N} (Av^{K}_{\bar N} \calF , \calG) \ar[r] & H ( \niceF , \niceG) \ar[r] & H (L_{\alpha} \niceF, L_{\alpha} \niceG) \ar[r] & H (L_{\alpha} \niceF, \niceG) \ar[r] & H_{\bar N} (Av^{MN}_{M\bar N}\circ L_{\alpha} \niceF , \niceG )
}
$$

where the vertical arrows are induced by application of forgetful functors. The lower line is similar to the upper one, but the middle morphism in the lower line is an isomorphism, by theorem \ref{TransRigid}.
	
observe that if we compose both side of (\ref{second arrow}) with
$\oblv^{M\bar N}_{\bar N}:D_{M\bar N}(\mtB)\ra D_{\bar N}(\mtB)$, we get a map
\[Av_{\bar N}^{MN}\circ L_\alpha\is\oblv^{M\bar N}_{\bar N}\circ Av^{MN}_{M\bar N} \circ L_{\alpha}\ra
\oblv^{M\bar N}_{\bar N}\circ Av^{K}_{M\bar N}\is Av_{\bar N}^{K},\]
which is the isomorphism in (\ref{second arrow}).
Now since $\oblv^{M\bar N}_{\bar N}$ is conservative, it implies
(\ref{first arrow}) is an isomorphism.

Now we compose both sides of (\ref{second arrow}) with
$av^{M\bar N}_{MN}$, and use lemma \ref{AvAreInverse}, to obtain an isomorphism of functors $D_K(\mtB)_{um} \to D_K(MN)_{um}$:

$$ L_{\alpha} \cong av^{M\bar N}_{MN} \circ Av^K_{M\bar N} .$$

The proof is completed.

\end{proof}
}


\subsection{Applications: Second adjointness and Casselman's submodule theorem}\label{Sec:CassSubmodThm}
In this section we work in the $D$-modules setting.
We give applications of theorem \ref{FormulaForJC}; Namely, we 
establish a second adjointness theorem for Harish-Chandra modules and 
reprove Casselman's submodule theorem.

\subsubsection{
Second adjointness 
}

We suppose that we are in the setting of \S\ref{symmetric spaces}. Denote by $\frakg$ the Lie algebra of $G$, etc. 
For any subgroup $H$ of $G$,
let $\mathcal M_0(\fg, H)$
denote the category of Harish-Chandra $(\fg,H)$-modules with trivial infinitesimal character.
Consider the Casselman-Jacquet functor 
$J:\mathcal M_0(\fg,K)\ra\mathcal M_0(\fg,MN)$, as in, say, \cite{ENV}. 
Recall that $J(V)$ consists of $\fn$-finite vectors inside the completion of $V$ w.r.t. powers of $\bar\frakn$.

\begin{thm}\label{Second adjoint}
The Casselman-Jacquet functor $$J : \calM_0 (\frakg , K) \to \calM_0 (\frakg , MN)$$ admits both a left and a right adjoint. 
\end{thm}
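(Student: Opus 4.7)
The plan is to deduce the theorem from Theorem \ref{FormulaForJC}(2), which provides both a left and a right adjoint for the geometric Jacquet functor $L_\gamma : D_K(\mB) \to D_{MN}(\mB)$ at the derived level, together with the identification of $J$ with $L_\gamma$ established in \cite{ENV}.

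First, for each subgroup $H \in \{K, MN\}$ I would invoke (the appropriate monodromic variant of) Beilinson--Bernstein localization at the trivial infinitesimal character. This equates $\calM_0(\frakg, H)$ with the abelian heart of a suitable $H$-equivariant category of $D$-modules on $\mB$; since trivial infinitesimal character is a wall, some care is required --- one passes through $G/U$ with its $T$-monodromy, or uses the formal-completion enhancement, as in the setup of \cite{ENV}. Under these equivalences, the main result of \cite{ENV} identifies the algebraic Casselman--Jacquet functor $J$ with the restriction to the hearts of $L_\gamma$; note that $L_\gamma$ is $t$-exact by Lemma \ref{LimFunBasicProp}(1), so its restriction to the perverse heart is well-defined.

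Next I would upgrade the derived-level adjoints from Theorem \ref{FormulaForJC}(2) to abelian-level adjoints. Denote by $L_\gamma^{\mathrm L}$ and $L_\gamma^{\mathrm R}$ the left and right adjoints of $L_\gamma$ on the derived categories. Since $L_\gamma$ is $t$-exact, $L_\gamma^{\mathrm L}$ is right $t$-exact and $L_\gamma^{\mathrm R}$ is left $t$-exact, so applying $H^0$ produces functors $H^0 L_\gamma^{\mathrm L}$, $H^0 L_\gamma^{\mathrm R}$ between the hearts. A short adjunction calculation, using the adjunction $(\tau^{\leq 0}, \mathrm{incl})$ and the fact that all objects of the source heart lie in $D^{\leq 0}$, confirms that these are respectively left and right adjoint to $L_\gamma|_{\heartsuit}$. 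Transporting through the Beilinson--Bernstein equivalences yields the desired adjoints to $J$.

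The main obstacle will be the careful matching of equivariance conventions on the $D$-module and Harish--Chandra sides. In particular, one must verify that the monodromic BB-localization used to handle the trivial-infinitesimal-character wall pairs the $K$- and $MN$-equivariant $D$-module categories appearing in Theorem \ref{FormulaForJC} with $\calM_0(\frakg, K)$ and $\calM_0(\frakg, MN)$, and that the \cite{ENV} identification $J \leftrightarrow L_\gamma|_{\heartsuit}$ respects these specific equivariant enhancements. Once this bookkeeping is in place, the existence of both adjoints to $J$ is a formal consequence of the geometric input.
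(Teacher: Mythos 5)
Your proposal follows essentially the same route as the paper: derive the adjoints of $L_\gamma$ from theorem \ref{FormulaForJC}, descend them to the abelian hearts using $t$-exactness and truncation, and transport them to $J$ via \cite[theorem 1.1]{ENV}, Riemann--Hilbert and Beilinson--Bernstein localization. One minor remark: the trivial infinitesimal character (that of the trivial representation) is \emph{regular}, so the ``wall'' caution is unnecessary --- localization is an honest abelian equivalence here, which only simplifies your bookkeeping.
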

\begin{proof}
It follows from theorem \ref{FormulaForJC} that the geometric Jacquet functor 
$L_\gamma:P_K(\mB)\ra P_{MN}(\mB)$, viewed as a functor 
between abelian categories of equivariant perverse sheaves, admits 
both a left and a right adjoint. Now the 
theorem follows from \cite[theorem 1.1]{ENV}
and the Riemann-Hilbert correspondence.
\end{proof}
\begin{remark}
We could replace $\mathcal M_0(\fg,K)$ by Harish-Chandra $(\fg,K)$-modules with
arbitrary generalized infinitesimal character (although some care is required for a non-regular infinitesimal character). 
Also, it follows from the proof of the theorem above that the left and right adjoint 
of $J$, under the localization theorem, are given by zero cohomologies 
of composition of 
two averaging functors.
It would be desirable to have explicit algebraic formulas for
the adjoint functors.
This remark will be elaborated on in a future paper.

\end{remark}

\subsubsection{Casselman's submodule theorem}
Recall the statement of Casselman's submodule theorem:
\begin{thm} \label{CasselmanSubmoduleThm}

Let $V$ be a finitely-generated $(\fg , K)$-module. Assume that $V \neq 0$. Then $V / \frakn V \neq 0$.

\end{thm}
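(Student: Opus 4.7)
My plan is to deduce Casselman's submodule theorem from a conservativity property of the averaging functor $\on{Av}_{M\bar N}^{K}$, via the formula established in theorem \ref{FormulaForJC}.

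First, I would reduce to a statement about D-modules on $\mB$: by Beilinson-Bernstein localization (twisting to a regular dominant infinitesimal character if necessary), the nonzero finitely generated $(\fg, K)$-module $V$ corresponds to a nonzero object $\calF \in D_K(\mB)$, and by \cite[Theorem 1.1]{ENV}, the global sections of $L_\gamma(\calF)$ compute the Casselman-Jacquet module $J(V)$. Since $J(V)$ sits inside the $\bar\frakn$-adic completion $V^\wedge$ of $V$, the non-vanishing of $J(V)$ forces $V^\wedge \neq 0$, and a Nakayama-type argument then yields $V/\bar\frakn V \neq 0$. Replacing $P$ by its opposite parabolic $\bar P$ (equivalently, $\gamma$ by $-\gamma$) reverses the roles of $\frakn$ and $\bar\frakn$, giving the desired $V/\frakn V \neq 0$. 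Consequently, it suffices to prove that $L_\gamma$ annihilates no nonzero perverse sheaf in $D_K(\mB)$.

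Next, I would apply the formula $L_\gamma \cong \on{av}_{MN}^{M\bar N} \circ \on{Av}_{M\bar N}^{K}$ of theorem \ref{FormulaForJC}. Since $\on{av}_{MN}^{M\bar N}$ is an equivalence of categories by theorem \ref{Thm:AvIsEqui}, and in particular conservative, the problem is further reduced to showing that $\on{Av}_{M\bar N}^{K}: D_K(\mB) \to D_{M\bar N}(\mB)$ is conservative on nonzero objects---this is the ``faithfulness of averaging'' alluded to in the introduction. To verify the conservativity I would exploit the Matsuki setting: by lemma \ref{Lem:FlagIsMatsuki}, the orbits of $K$ and of $M\bar N$ on $\mB$ are in bijection via Matsuki duality, and each pair meets transversally in a single free $M$-orbit (remark \ref{Rem:AdaptedIsLimitGood}). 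Given a nonzero $\calF \in D_K(\mB)$, one chooses a $K$-orbit $\calO_K$ open in the support of $\calF$ and takes its Matsuki-dual $M\bar N$-orbit $\calO_{M\bar N}$; computing $\on{Av}_{M\bar N}^{K}(\calF)$ as a pullback-pushforward along the correspondence $K\backslash\mB \leftarrow M\backslash\mB \to M\bar N \backslash \mB$, the transverse intersection $\calO_K \cap \calO_{M\bar N}$ produces a nonzero local system on $\calO_{M\bar N}$ in the appropriate perverse degree.

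The main obstacle in this plan is the last step: making rigorous that contributions from other $K$-orbits in the support of $\calF$ do not cancel the contribution from $\calO_K$ on $\calO_{M\bar N}$, and tracking the perverse cohomological degrees correctly. This is essentially a microlocal analysis compatible with the Matsuki correspondence, and should ultimately follow from the fact (implicit in \cite{MUV}) that $!$-averaging realizes, up to cohomological shifts controlled by Matsuki's orbit pairing, an equivalence between the $K$-equivariant and $M\bar N$-equivariant derived categories on $\mB$.
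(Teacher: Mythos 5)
Your reduction is the same as the paper's: localize via Beilinson--Bernstein to a regular infinitesimal character, identify $J$ with $L_\gamma$ using \cite{ENV}, invoke $L_\gamma \cong \av^{M\bar N}_{MN}\circ \Av^K_{M\bar N}$ from theorem \ref{FormulaForJC} together with the fact that $\av^{M\bar N}_{MN}$ is an equivalence, and thereby reduce everything to conservativity of the averaging functor. The gap is exactly where you flag it, and it is a genuine one: the conservativity of $\Av^K_{M\bar N}$ is the whole content of the theorem at that point, and the route you sketch for it rests on a false premise. Lemma \ref{Lem:FlagIsMatsuki} exhibits $(G,MN,M\bar N,\gamma,\mB)$ as a Matsuki datum, so the Matsuki bijection there is between $MN$-orbits and $M\bar N$-orbits (via $M$-orbits on $\mB^{\bG_m}$), \emph{not} between $K$-orbits and $M\bar N$-orbits; $K$ is merely \emph{adapted} (transversal to $M\bar N$-orbits, with $K_0=MN$). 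Already for $G=\SL_2$ with $K$ a maximal torus there are three $K$-orbits but two $M\bar N$-orbits on $\bbP^1$, so there is no orbit pairing and $\Av^K_{M\bar N}$ is not an equivalence up to shifts; the equivalence of theorem \ref{Thm:AvIsEqui} relates $D_{MN}(\mB)$ and $D_{M\bar N}(\mB)$ and is only used to discard $\av^{M\bar N}_{MN}$. With that crutch removed, the cancellation problem you identify remains unresolved: the stalk of $q_!p^*\calF$ at a point of $\calO_{M\bar N}$ receives contributions from every $K$-orbit meeting the corresponding coset, and nothing in your sketch prevents these from cancelling in the derived category.

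The paper closes this gap by a different and more elementary device. Forgetting equivariance, it suffices to show: if $\calF\in D(\mB)$ is nonzero and constructible with respect to a stratification $S$ transversal to the stratification by orbits of a unipotent subgroup $U$, then $\Av^{\{ e \}}_U(\calF)\neq 0$. Instead of computing $\Av^{\{ e \}}_U(\calF)$ directly, one tests it by adjunction against $i_*\pi^!C_{pt}$ for the inclusion $i: B\hookrightarrow \mB$ of a single well-chosen $U$-orbit:
$$Hom_{D_U(\mB)}\bigl(\Av^{\{ e \}}_U(\calF),\, i_*\pi^!C_{pt}\bigr) \cong Hom_{D(pt)}\bigl(\pi_! i^*\calF,\, C_{pt}\bigr).$$
A combinatorial lemma (lemma \ref{TransversalStratificationLemma}: on a connected smooth proper variety with two transversal stratifications, the second having affine strata, every stratum of the first meets a stratum of the second of complementary dimension) produces a $U$-orbit $B$ whose intersection with $\bigcup\{A\in S : i_A^*\calF\neq 0\}$ is finite and nonempty. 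Then $i^*\calF$ is a nonzero complex supported on finitely many points, so $\pi_!i^*\calF\neq 0$ with no possibility of cancellation, and the adjunction gives $\Av^{\{ e \}}_U(\calF)\neq 0$. This is the step you need to supply; the rest of your outline is sound and coincides with the paper's argument.
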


\quash{
\begin{remark}
Note that because of $K$-equivariancy, it is equivalent to demand in the theorem that for some $x$ in the open $K$-orbit one has $M / \frakn_x M \neq 0$.
\end{remark}
}

Let us skecth now the translation of this problem to a geometric one. One can test whether $V / \frakn V \neq 0$ by checking that the Casselman-Jacquet module of $V$ is non-zero. 
It is easy (see \cite{BB1}) to reduce the statement to the case where $V$ has a regular central character. By \cite{ENV}, in that case we can pass (under Beilinson-Bernstein equivalence) to the picture of twisted $D$-modules, and the limit functor $L_{\gamma} : D_{\lambda}(\mB) \to D_{\lambda}(\mB)$ realizes the Casselman-Jacquet functor ($\lambda$ is some twisting, suited to the central character). We suppose for simplicity that $\lambda$ corresponds just to usual $D$-modules. In this way, theorem \ref{CasselmanSubmoduleThm} is reduced to the following statement:

\begin{thm}

The functor $L_{\gamma} : D_K(\mB) \to D_{MN}(\mB)$ is conservative (i.e., sends non-zero objects to non-zero objects).

\end{thm}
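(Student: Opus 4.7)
The plan is to use the averaging formula of theorem \ref{FormulaForJC} to reduce conservativity of $L_\gamma$ to conservativity of a single averaging functor, and then establish the latter orbit-by-orbit. First, by theorem \ref{FormulaForJC} one has $L_\gamma \cong \av^{M\bar N}_{MN} \circ \Av^K_{M\bar N}$; by lemma \ref{Lem:FlagIsMatsuki} together with theorem \ref{Thm:AvIsEqui}, the functor $\av^{M\bar N}_{MN} : D_{M\bar N}(\mB) \to D_{MN}(\mB)$ is one half of the Matsuki equivalence, hence in particular conservative. It therefore suffices to prove that $\Av^K_{M\bar N} : D_K(\mB) \to D_{M\bar N}(\mB)$ is conservative.

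Second, I would reduce the problem to simple perverse sheaves. Since $L_\gamma$ is $t$-exact (lemma \ref{LimFunBasicProp}) and $\av^{M\bar N}_{MN}$ is an equivalence, and since $K$ acts on $\mB$ with finitely many orbits (by Iwasawa), the abelian category $P_K(\mB)$ is artinian with simple objects $IC_{(\calO,\mathcal{L})}$ indexed by pairs of a $K$-orbit $\calO$ and an irreducible $K$-equivariant local system $\mathcal{L}$ on $\calO$. Passing to perverse cohomology and using exactness of $L_\gamma : P_K(\mB) \to P_{MN}(\mB)$, the conservativity is equivalent to showing $L_\gamma(IC_{(\calO,\mathcal{L})}) \neq 0$, or equivalently $\Av^K_{M\bar N}(IC_{(\calO,\mathcal{L})}) \neq 0$, for every such simple.

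Third, fix a $K$-orbit $\calO$. By the Matsuki structure (lemma \ref{Lem:FlagIsMatsuki} and the Matsuki datum axioms in definition \ref{Matsuki datum}), $\calO$ contains a unique $\bG_m$-fixed $M$-orbit $\calO_M$, and there is a unique $M\bar N$-orbit $\calO_\infty$ containing $\calO_M$. By transversality of $K$- and $M\bar N$-orbits, the orbits $\calO$ and $\calO_\infty$ meet exactly along $\calO_M$. Picking a point $x \in \calO_M$ and a slice $S \hookrightarrow \mB$ through $x$ transverse to $\calO_\infty$, the multiplication map $(M\bar N) \times S \to \mB$ is smooth at $(e,x)$ by transversality, producing an \'etale-local product decomposition near $\calO_M$. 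In this local picture, $IC_{(\calO,\mathcal{L})}$ is (up to shift and Tate twist) an external product of the IC sheaf of $\calO \cap S$ with a skyscraper in the $\calO_\infty$-direction; the functor $\Av^K_{M\bar N}$, being (up to shift) the pull-push along the smooth correspondence $K \backslash \mB \leftarrow M \backslash \mB \to M\bar N \backslash \mB$, smears this skyscraper into a constant sheaf along $\calO_\infty$. Hence $\Av^K_{M\bar N}(IC_{(\calO,\mathcal{L})})$ restricts to $\calO_\infty$ as a nonzero local system (indeed, essentially as $IC_{(\calO_\infty,\mathcal{L}')}$ for a suitable transport $\mathcal{L}'$ of $\mathcal{L}$).

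The main obstacle will be the rigorous justification of the local product decomposition and of the computation of $\Av^K_{M\bar N}$ on $IC_{(\calO,\mathcal{L})}$ in that decomposition, with careful tracking of shifts and equivariant structures; this is the ``faithfulness property of averaging'' alluded to in the introduction. A cleaner alternative, which I would consider as a back-up, is to apply Braden's hyperbolic localization (already used in the proof of theorem \ref{Thm:AvIsEqui}) directly to $L_\gamma(IC_{(\calO,\mathcal{L})})$: using that $L_\gamma$ on $\bG_m$-weakly-monodromic sheaves computes the contraction to the $\bG_m$-fixed stratum, one identifies the $!$-restriction of $L_\gamma(IC_{(\calO,\mathcal{L})})$ to $\calO_M$ with the $!$-restriction of $IC_{(\calO,\mathcal{L})}$ along the contracting $MN$-orbit $\calO_0$ through $\calO_M$, which is manifestly nonzero.
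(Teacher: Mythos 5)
Your first reduction is exactly the paper's: by theorem \ref{FormulaForJC} and the fact that $\av^{M\bar N}_{MN}$ is an equivalence (lemma \ref{long intertwining} / theorem \ref{Thm:AvIsEqui}), conservativity of $L_\gamma$ is equivalent to conservativity of $\Av^K_{M\bar N}$. From that point on, however, your argument has a genuine gap, and it sits precisely where the paper has to do real work. Your third step rests on the claim that each $K$-orbit $\calO$ contains a \emph{unique} $\bG_m$-fixed $M$-orbit $\calO_M$, and that $\calO$ meets the corresponding $M\bar N$-orbit $\calO_\infty$ exactly along $\calO_M$. The first claim is already false in the group case $G=H\times H$, $K=\Delta H$: a $K$-orbit in $\mB_H\times\mB_H$ of relative position $w$ contains $|W|$ distinct $T$-fixed points, hence many $\bG_m$-fixed $M$-orbits. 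The second claim does not follow from transversality either: transversality of $\calO$ and an $M\bar N$-orbit $\calO'$ only forces $\dim(\calO\cap\calO')=\dim\calO+\dim\calO'-\dim\mB$ on each nonempty component, so the intersection is finite only when $\on{codim}(\calO')=\dim(\calO)$, and one still has to produce such an orbit that actually meets $\calO$ (more precisely, meets the support of the given nonzero sheaf in a finite \emph{nonempty} set). This existence statement is exactly the content of the paper's lemma \ref{TransversalStratificationLemma}, proved by an induction on $\dim A$ that crucially uses properness of $\mB$ and affineness of the $M\bar N$-orbits (a positive-dimensional proper intersection cannot be contained in an affine stratum, so it must hit the boundary). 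Nothing in your proposal supplies this input, and the subsequent local-product computation of $\Av^K_{M\bar N}(\IC)$ — which you yourself flag as the main obstacle — is replaced in the paper by a short adjunction argument: once an orbit $B$ meets the support in a finite nonempty set, $\Hom(\Av^{\{e\}}_U(\calF), i_*\pi^!C_{pt})\cong\Hom(\pi_!i^*\calF,C_{pt})\neq 0$.

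Your back-up plan via Braden's theorem does not close the gap either: it relies on the same false statement about fixed points in $\calO$, and on an identification of the hyperbolic ($!$-)restriction of $L_\gamma(\calF)$ with that of $\calF$ itself, which is not established in the paper (Braden's theorem is invoked there only for $\bG_m$-weakly-monodromic objects, and a $K$-equivariant $\calF$ is not weakly monodromic for $\gamma$). Your second step (reducing to simple perverse sheaves via $t$-exactness of $L_\gamma$) is correct but unnecessary; the paper's argument applies directly to any nonzero constructible object via its support stratification. To repair your proof you would essentially have to import lemma \ref{TransversalStratificationLemma} and the adjunction computation, at which point you recover the paper's proof.
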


By theorem \ref{FormulaForJC}, and the fact that $\av^{M\bar N}_{MN}$ is an equivalence, this last theorem is reduced to checking that $\Av^K_{MN}$ is conservative. This follows from the following proposition:

\begin{proposition}

Let $U \subset G$ be maximal unipotent. Let $\calF \in D(\mB)$, and suppose that $\calF$ is constructible w.r.t. a stratification which is transversal to the stratification by $U$-orbits. Assume that $\calF \neq 0$. Then $\Av^{\{ e \}}_U (\calF) \neq 0$.

\end{proposition}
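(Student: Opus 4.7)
The plan is to compute the stalk of $\Av^{\{e\}}_U(\calF)$ at a well-chosen point $y\in\mB$ and show it is non-zero. Recall that, up to a fixed cohomological shift and after forgetting $U$-equivariance, $\Av^{\{e\}}_U$ is computed via the correspondence $\mB\xleftarrow{p}U\times\mB\xrightarrow{a}\mB$ (with $p$ the second projection and $a$ the $U$-action) as $a_!p^*(-)$. Proper base change then identifies the stalk at $y$ with $R\Gamma_c(U,\phi_y^*\calF)$, where $\phi_y\colon U\to\mB$, $u\mapsto u^{-1}y$. The map $\phi_y$ factors as a trivial $U_y$-bundle $U\to U\cdot y=C_w$ followed by the inclusion of the Schubert cell $C_w\hookrightarrow\mB$; since $U_y$ is unipotent, hence an affine space with non-vanishing $R\Gamma_c$, this further reduces the problem to exhibiting a Schubert cell $C_w$ for which $R\Gamma_c(C_w,\calF|_{C_w})\neq 0$.

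To produce such a $C_w$, I would take $w$ minimal in the Bruhat order among those with $C_w\cap\operatorname{supp}(\calF)\neq\emptyset$. By this minimality, $\operatorname{supp}(\calF)\cap\overline{C_w}\subset C_w$, so the natural map $(j_w)_!(\calF|_{C_w})\to\calF|_{\overline{C_w}}$, where $j_w\colon C_w\hookrightarrow\overline{C_w}$, is an isomorphism. Since $\overline{C_w}$ is a projective Schubert variety, this yields $R\Gamma_c(C_w,\calF|_{C_w})\is R\Gamma(\overline{C_w},\calF|_{\overline{C_w}})$, reducing the problem to showing that the hypercohomology on the proper variety $\overline{C_w}$ of the non-zero complex $\calF|_{\overline{C_w}}$ does not vanish.

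This last non-vanishing is where the transversality hypothesis enters crucially. It ensures that each non-empty intersection $S_\alpha\cap C_w$ is smooth of the expected codimension in $C_w$; in particular, transversality forbids any stratum of the stratification defining $\calF$ from coinciding with, or being contained in, a $U$-orbit of strictly larger dimension, which is precisely the degenerate configuration that could produce Kummer-type Euler-characteristic cancellations on $\overline{C_w}$ (as one sees already in the $\SL_2$ example: the $\IC$ sheaf of a non-trivial Kummer local system on $\bbP^1$ has vanishing hypercohomology, but fails the transversality hypothesis against the $U$-orbit $\{\infty\}\subset\bbP^1$).

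The main obstacle I would expect is to make this intuition into a proof: one must show that transversality forces enough rigidity on $\calF|_{\overline{C_w}}$ to rule out all such cancellations. A clean way to handle this is to work in the mixed ($\ell$-adic or mixed Hodge) setting, where $\calF$ carries a canonical weight filtration --- harmless in the application to Casselman's submodule theorem, as the relevant sheaves are all of geometric origin. The top-weight simple sub-quotient of $\calF|_{\overline{C_w}}$ is then forced to be an $\IC$ sheaf of a projective subvariety of $\overline{C_w}$ with a local system of geometric origin, whose hypercohomology is non-zero by a Poincar\'e-duality argument on the projective Schubert variety; since the top-weight contribution cannot be cancelled in the weight-graded pieces of the hypercohomology spectral sequence, $R\Gamma(\overline{C_w},\calF|_{\overline{C_w}})\neq 0$, and the conclusion follows from the stalk formula in the first paragraph.
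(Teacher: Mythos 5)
Your first two paragraphs are fine and run parallel to the paper's own argument: the paper also reduces the statement to exhibiting a single $U$-orbit $B$ on which the $*$-restriction of $\calF$ has non-vanishing compactly supported cohomology (it phrases this through the adjunction $\Hom_{D_U(\mB)}(\Av^{\{e\}}_U\calF, i_*\pi^!C)\cong \Hom_{D(pt)}(\pi_!i^*\calF,C)$ rather than through stalks, but that is the same computation). The divergence, and the gap, lies in which orbit you pick and in the final non-vanishing step. Your Bruhat-minimal cell $C_w$ does not do the job. If ``$\on{supp}$'' means the closed support, then $\on{supp}(\calF)\cap\overline{C_w}$ is indeed finite (it is closed in the projective $\overline{C_w}$ and contained in the affine $C_w$), but the stalks of $\calF$ at those points may all vanish, so $\calF|_{\overline{C_w}}$ may be zero. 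If instead you use the locus $\Sigma$ of non-zero stalks, then minimality gives $\Sigma\cap\overline{C_w}\subset C_w$ and $\calF|_{\overline{C_w}}=(j_w)_!(\calF|_{C_w})\neq 0$ as you say, but $\Sigma\cap C_w$ can be positive-dimensional --- e.g.\ a punctured affine curve carrying a local system with non-trivial monodromy --- and then $R\Gamma(\overline{C_w},\calF|_{\overline{C_w}})=R\Gamma_c(\Sigma\cap C_w,\calF)$ can a priori vanish. Your weight argument does not close this: it is false that the hypercohomology of $\IC_Z(\calL)$ on a projective (even Schubert) variety $Z$ is non-zero for $\calL$ of geometric origin --- your own example $\IC_{\bbP^1}(\calL)$ with $\calL$ Kummer is of geometric origin and has vanishing hypercohomology, and Poincar\'e duality only identifies $H^i(Z,\IC(\calL))^\vee$ with $H^{-i}(Z,\IC(\calL^\vee))$, which yields no non-vanishing. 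The claim that transversality ``forbids'' such configurations is precisely what would need proof, and the hypothesis --- transversality on $\mB$ between the strata of $S$ and the $U$-orbits --- says nothing direct about the monodromy of the local systems occurring in $\calF|_{C_w}$ around the boundary of their support inside $C_w$.

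The paper sidesteps all of this by choosing the orbit by dimension rather than by the Bruhat order: a separate combinatorial lemma (proved by induction on $\dim A$, using only that $\mB$ is connected, smooth and proper and that $U$-orbits are affine) produces, for a stratum $A\in S$ of maximal dimension among those supporting $\calF$, a $U$-orbit $B$ with $codim(B)=\dim(A)$ and $A\cap B\neq\emptyset$; transversality then forces $B\cap\Sigma$ to be finite and non-empty, so the relevant complex is a finite direct sum of non-zero stalks and no cohomological cancellation is possible. That lemma is the real content of the proof and is missing from your proposal. If you substitute the orbit it produces for your Bruhat-minimal cell, the stalk computation in your first paragraph finishes the argument and the entire third and fourth paragraphs of your proposal become unnecessary.
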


\begin{proof}

Denote by $S$ a stratification as in the statement of the theorem; i.e. $\calF$ is constructible w.r.t. $S$, and $S$ is transversal to the stratification by $U$-orbits. Let $S_{\calF} = \{ A \in S | i_A^* \calF \neq 0 \}$ (here $i_A : A \to X$ is the inclusion). By lemma \ref{TransversalStratificationLemma} below, there exists an $U$-orbit $B$ such that $B \cap (\cup_{A \in S_{\calF}} A)$ is finite and non-empty. Write $i: B \to X$ for the inclusion and $\pi : B \to pt$ for the map to the point. We have that $i^* \calF$ is non-zero, and supported in finitely many points. This easily gives that $\pi_! i^* \calF \neq 0$. Then $Hom ( \pi_! i^* \calF , C_{pt} ) \neq 0$ and since:
$$ Hom_{D_U(\mB)} ( \Av^{\{ e \}}_U (\calF) , i_* \pi^! C_{pt} ) = Hom_{D(\mB)}( \calF , i_* \pi^! C_{pt} ) = Hom_{D(pt)} ( \pi_! i^* \calF , C_{pt} ) $$
we get obviously $\Av^{\{ e \}}_U (\calF) \neq 0$.

\end{proof}

\begin{lemma} \label{TransversalStratificationLemma}
Let $X$ be a variety, and $S$,$T$ two transversal stratifications of $X$. Assume:
\begin{itemize}
\item $X$ is connected, smooth and proper.
\item All strata of $T$ are affine.
\end{itemize}
Then for every $A \in S$, there exists $B \in T$ such that $codim(B)=dim(A)$ and $A \cap B \neq \emptyset$.

\end{lemma}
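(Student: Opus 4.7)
The plan is to exploit the cellular structure the $T$-stratification confers on $X$: since all strata of $T$ are affine and $X$ is smooth and proper, $X$ is a cellular variety in the classical sense, and the fundamental classes $\{[\overline{B}] : B \in T\}$ form a $\bbQ$-basis of $H^*(X, \bbQ)$ (see Fulton, \emph{Intersection Theory}, Example 19.1.11). Writing $d := \dim A$, in particular $\{[\overline{B}] : B \in T, \on{codim}(B) = d\}$ is a basis of $H^{2d}(X, \bbQ)$. The strategy will be to find a $B$ from this basis with $[\overline{A}] \cdot [\overline{B}] \neq 0$, and to translate non-vanishing of this intersection product into non-emptiness of $A \cap B$.

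For the translation, I will first establish the geometric identity $\overline{A} \cap \overline{B} = A \cap B$ set-theoretically, whenever $\on{codim}(B) = d$. Decomposing $\overline{A} \cap \overline{B}$ as the union of $A' \cap B'$ over strata $A' \in S$ with $A' \subseteq \overline{A}$ and $B' \in T$ with $B' \subseteq \overline{B}$, transversality forces any non-empty $A' \cap B'$ to satisfy $\dim A' + \dim B' \geq \dim X$. Combined with $\dim A' \leq d$ (with equality only for $A' = A$, using irreducibility of strata --- which holds in the intended applications where strata are orbits of connected groups) and dually $\dim B' \leq \dim X - d$ (with equality only for $B' = B$), only the pair $(A, B)$ can contribute. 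Consequently $A \cap B = \emptyset$ would force $\overline{A}$ and $\overline{B}$ to be disjoint, and hence $[\overline{A}] \cdot [\overline{B}] = 0$.

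For the non-vanishing, I would argue that $[\overline{A}] \neq 0$ in $H^{2\on{codim}(A)}(X, \bbQ)$; combined with Poincar\'e duality, which makes the pairing between $H^{2\on{codim}(A)}(X,\bbQ)$ and $H^{2d}(X,\bbQ)$ perfect, this produces a basis element $[\overline{B}]$ pairing non-trivially with $[\overline{A}]$, completing the argument. The non-vanishing of $[\overline{A}]$ itself will follow from the same cellular-basis description: letting $B_0 \in T$ be the unique top-dimensional cell contained in $\overline{A}$ (unique by irreducibility of $\overline{A}$), the class $[\overline{A}]$ agrees with $[\overline{B_0}]$ modulo lower-dimensional terms, so the coefficient of $[\overline{B_0}]$ in its expansion is $1$.

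The main technical point to pin down carefully is the cellular-variety basis theorem in the precise generality needed, together with the implicit assumption that strata be irreducible; both are standard in the intended context (flag variety, orbits of connected groups) but need a brief comment in the abstract formulation used here.
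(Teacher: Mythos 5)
Your route is genuinely different from the paper's (which is an elementary induction on $\dim(A)$), but as written it has a real gap, not just a technical point to ``pin down'': the cellular-variety basis theorem you invoke (Fulton, Example 19.1.11) requires the strata to be isomorphic to affine \emph{spaces} $\mathbb{A}^{n}$, whereas the lemma only assumes the strata of $T$ are affine \emph{varieties}. Under the stated hypothesis the basis claim is simply false --- e.g.\ the stratification of an elliptic curve $E$ into $\{0\}$ and $E\setminus\{0\}$ has affine strata, yet $[\{0\}]$ and $[E]$ do not span $H^{*}(E,\mathbb{Q})$ --- so the key input of your argument is unavailable in the generality in which the lemma is stated and used. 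A second, independent problem is your justification that $[\overline{A}]\neq 0$: you appeal to ``the unique top-dimensional cell of $T$ contained in $\overline{A}$,'' but $\overline{A}$ is a union of $S$-strata and need not contain any $T$-stratum at all; and since $X$ is only assumed proper (not projective), the non-vanishing of the class of an irreducible subvariety is itself a delicate point that cannot be taken for granted. In the intended application ($X=\mB$ projective, $T$ the stratification by $U$-orbits, which are genuine cells) both issues can be repaired, so your argument would suffice for the paper's use of the lemma, but it does not prove the lemma as stated. Your reduction of $\overline{A}\cap\overline{B}$ to $A\cap B$ via transversality and the frontier condition is correct (modulo irreducibility of strata, which the paper's proof also tacitly uses).

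For comparison, the paper's proof avoids all cohomology. It inducts on $\dim(A)$: given a $T$-stratum $B$ meeting $A$ with $\dim(\partial A)\leq\operatorname{codim}(B)<\dim(A)$, transversality in the smooth proper $X$ forces every component of $\overline{A}\cap\overline{B}$ to have dimension $\geq\dim(A)-\operatorname{codim}(B)\geq 1$, and a positive-dimensional proper variety cannot sit inside the affine $B$; hence $A$ meets $\partial B$, i.e.\ a $T$-stratum of strictly larger codimension (which is bounded above by $\dim(A)$, again by transversality). The induction hypothesis applied to a top-dimensional stratum of $\partial A$ supplies the starting $B$. This uses only that the $T$-strata are affine, which is exactly why the lemma is stated that way; if you want to salvage your approach you should either strengthen the hypothesis to ``strata of $T$ are affine spaces and $X$ is projective,'' or switch to an argument of this combinatorial--geometric kind.
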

\begin{proof}

The proof is by induction on $dim(A)$. If $dim(A)=0$, the lemma is clear (the unique open stratum of $T$ will do the job).

So let $A \in S$, $dim(A) \ge 1$.

Let $B \in T$ satisfy $dim(\partial A) \leq codim(B) < dim(A)$ (the dimension of an empty variety is $-1$). We will show now that if $A \cap B \neq \emptyset$, then we can find $B_1 \in T$ such that $codim(B) < codim(B_1)$ and $A \cap B_1 \neq \emptyset$. Indeed, $\bar{A} \cap \bar{B} \neq \emptyset$. But as $\bar{A} \cap \bar{B}$ is proper and of dimension $\ge 1$, it is not possible that $\bar{A} \cap \bar{B} \subset B$ (Since $B$ is affine). Hence, $\bar{A} \cap \partial B \neq \emptyset$. But notice that $\partial A \cap \partial B = \emptyset$, because of the condition $dim(\partial A) \leq codim(B)$. Hence, $A \cap \partial B \neq 0$. Thus we know that $A$ intersects with a $T$-stratum of bigger codimension.

This argument shows that if we will find $B \in T$ intersecting $A$ and such that $dim(\partial A) \leq codim(B)$, we are done. If $\partial A = \emptyset$, this is trivial. If $\partial A \neq \emptyset$, consider a stratum of $S$, $A_1 \subset \partial A$ of largest possible dimension. By induction, there is $B_1 \in T$ such that $ A_1 \cap B_1 \neq \emptyset$ and $codim(B_1)=dim(A_1)$. But by standard algebraic geomtery, since $A \cup A_1$ is an irreducible subvariety of dimension bigger than $dim(A_1)$, we see that $dim ((A \cup A_1) \cap B_1) \ge 1$, and hence (since $dim (A_1 \cap B_1)  = 0$) we conclude $A \cap B_1 \neq \emptyset$.

\end{proof}


\section{The formula for the wonderful degeneration}\label{formula for WD}

In this section we prove the main result of this paper which says that, on the category of character sheaves on
the symmetric space $X$, the limit functor associated to the wonderful degeneration of $X$ is isomorphic to a
composition of two averaging functors.

We suppose throughout this section that we are in the setting of \S\ref{symmetric spaces}.

\subsection{The limit functor}

Recall that we have the limit functor \[L_{\gamma} : D_{K\times K}(G)\is D_K(G/K) \to D_{MN}(G/K)\is D_{MN\times K}(G).\] 
Let us remind that the functor $L_{\gamma} : D_{K\times K}(G) \to D_{MN\times K}(G)$ is described as follows (identifying $D_{K\times K}(G)$ with $D_K(G/K)$ and $D_{MN\times K}(G)$ with $D_{MN}(G/K)$): one 
applies construction-lemma \ref{LemmaEquiLift} for the equivariant limit functor 
to the case of the constant limit scheme $f: \widetilde{X} := G/K\times\bA^1 \to \bA^1$
and the limit group scheme $\tK$ whose fiber over $1$ is $K$, and whose fiber over $0$ is $MN$.

\quash{
\begin{remark}
Let us remind that the functor $L_{\gamma} : D_{K\times K}(G) \to D_{MN\times K}(G)$ is described as follows (identifying $D_K(G)$ with $D(X)$ and $D_{MN}(G)$ with $D(MN \backslash G)$)
: one has the limit scheme $f: \widetilde{X} := \widetilde{K} \backslash \widetilde{G} \to \bA^1$, whose fiber over $1$ is $X = K \backslash G$, and whose fiber over $0$ is $MN \backslash G$. Then $L_{\gamma}$ is the composition of two functors: $a_f^*$ which "spreads" $\bG_m$-equivariantly a sheaf on $f^{-1}(1) = X$ to a sheaf on $f^{-1}(\bG_m)$, and $\psi_f$, the nearby cycles functor.
\end{remark}}

\subsection{Character sheaves}\label{Sec:AdmShv}

\subsubsection{Horocycle correspondence}
Consider the Horocycle correspondence
\[G\stackrel{q}\la G\times\mB\stackrel{p}\ra \mB\times\mB,\]
where $q(g,B)=g$, $p(g,B)=(gBg^{-1},B)$.
The map $q$ is proper and smooth, and $p$ is smooth.
Notice that $G\times G$ acts on $G$, $G\times\mB$
and $\mB^2:=\mB\times\mB$ by the formulas $(g_1,g_2)\cdot g=g_1gg_2^{-1}$,
$(g_1,g_2)\cdot (g,B)=(g_1gg_2^{-1},g_2Bg_2^{-1})$ and
$(g_1,g_2)\cdot (B_1,B_2)=(g_1B_1 g_1^{-1},g_2B_2 g^{-1}_2)$. Moreover, the maps
$p$, $q$ are $G\times G$-equivariant w.r.t. these actions.
We define $CH=q_!p^*:D_{K\times K}(\mB^2)\ra D_{K\times K}(G)$ and its right adjoint $HC=p_*q^!:D_{K\times K}(G)\ra D_{K\times K}(\mB^2)$. 

We consider all the above varieties also as $G$-varieties, via the morphism $G \to G\times G$, $g \mapsto (g,e)$. Thus, 
for subgroups $H_1,H_2 \subset G$ we write $\Av^{H_1}_{H_2}$ for $\Av^{H_1\times\{e\}}_{H_2\times\{e\}}$, etc.

\subsubsection{Definition of character sheaves}\label{Def of admix}

Following Lusztig, Ginzburg, Grojnowski and Mirkovi\'c-Vilonen \cite{L,G,GR,MV},
we define:
\begin{definition}\label{admis sheaves}

An object $\mM\in D_{K\times K}(G)$ is called a \emph{character sheaf}, if all the irreducible consistuents of its perverse cohomologies appear as irreducible consistuents of perverse cohomologies of objects of the form $CH(\mF) \in D_{K\times K}(G)$, where $\mF\in D_{K\times K}(\mB^2)$. We denote by $D\mathcal{CS}(X/K)\subset D_{K\times K}(G)$ the full subcategory consisting of character sheaves. In other words, $D\mathcal{CS}$ is the full triangulated subcategory of $D_{K\times K}(G)$ generated by objects of the form $CH(\mF) \in D_{K\times K}(G)$ ($\mF\in D_{K\times K}(\mB^2)$) under direct summands (in view of the decomposition theorem). $D\mathcal{CS}(X/K)$ clearly inherits a perverse $t$-structure from $D_{K\times K}(G)$, whose heart is $\mathcal{CS}(X/K) := D\mathcal{CS}(X/K)\cap P_{K\times K}(G)$.

\quash{
A $K$-equivariant perverse sheaf $\mM\in P_{K}(G)$ is called an \emph{admissible sheaf} if
$\mM$ appears as a summand of
$CH(\mF)$ for some $\mF\in P_{K}(\mB^2)$.
We define $D\mA_K(G)$ to be the full subcategory of $D_{K}(G)$ consisting
of those complexes whose perverse constituentst are  admissible sheaves.
The intersection of $D\mA_K(G)$ with $P_K(G)$ will be denoted by $\mA_K(G)$.
}

\quash{
We define the category of admissible sheaves $D\mA(X)\subset D(X)$ on
$X$ to be the image of $D\mA_K(G)$ under the canonical equivalence $D_K(G)\is D(X)$. The intersection of $D\mA(X)$ with
$P(X)$ will be denoted by $\mA(X)$.}
\end{definition}

\quash{
\begin{remark}
The definition of admissible sheaves here is slightly different from that in \cite{L,G,GR,MV}: they
require $\mF\in D_{K\times K}(\mB^2)$.
For example, in the group case, admissible sheaves in \emph{loc. cit.} is the full subcategory of 
$D_{H}(H)$ ($H$ acts on $H$ by the conjugation action) generated by the essential image of the $CH$ functor 
$D_H(\mathcal B_H\times\mathcal B_H)\ra D_H(H)$.
\end{remark}}

\begin{remark}
In the $D$-modules setting, it is proved in \cite{G} that character sheaves are regular holonomic 
$K\times K$-equivariant 
$D$-modules on $G$ such that
 the action of the center $Z$ of the
universal enveloping algebra $U(\fg)$, viewed as invariant differential operators, is locally finite.
\end{remark}

\begin{example}
Consider the group case $(G=H\times H, K=\Delta H)$.
Then we have 
$X=K\backslash G\is H$ and $D_{K\times K}(G)\is D_{H}(H)$, here $H$ acts on 
$H$ by the adjoint action.
The category of character sheaves $D\mathcal{CS}(H)\subset D_{H}(H)$
is the 
full subcategory generated by the essential image of the $CH$ functor (for the group $H$)
\[CH:D_H(\mathcal B_H\times\mathcal B_H)\ra D_{H}(H).\]
\end{example}

Suppose that we are in the setting of \S\ref{symmetric spaces}.
For a $G$-variety $Y$, 
let $D_{MN}(Y)_{um}$ (resp. $D_{M\bar N}(Y)_{um}$)
be the full triangulated subcategory of $D_{MN}(Y)$ (resp. $D_{M\bar N}(Y)$)
generated by the image of $\oblv^P_{MN}:D_{P}(Y) \ra D_{MN}(Y)$ (resp.
$\oblv^{\bar P}_{M\bar N}:D_{\bar P}(Y) \ra D_{M\bar N}(Y)$).
We call $D_{MN}(Y)_{um}$, $D_{M\bar N}(Y)_{um}$
the $A$-(unipotent)-monodromic subcategories.

We have the following properties of character sheaves:

\begin{lemma}\label{A mon}
Let $\mM\in D\mathcal{CS}(X/K)$. We have $\Av_{M\bar N}^K(\mM)\in
D_{M\bar N\times K}(G)_{um}$
and $L_\gamma(\mM)\in D_{MN\times K}(G)_{um}$.
Here $D_{M\bar N\times K}(G)_{um}$ (resp. $D_{M\bar N\times K}(G)_{um}$) is the 
$A$-monodromic subcategories of 
$D_{MN\times K}(G)\is D_{MN}(G/K)$ (resp. $D_{M\bar N\times K}(G)\is D_{M\bar N}(G/K)$).

\end{lemma}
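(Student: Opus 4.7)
The plan is to reduce to generators of the form $CH(\mF)$, commute the averaging and limit functors through the horocycle correspondence, and then extract the required $A$-monodromicity via the $A$-torsor associated to $M\bar N\subset\bar P$ (resp.\ $MN\subset P$).

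First, since $D\mathcal{CS}(X/K)$ is by definition the full triangulated subcategory of $D_{K\times K}(G)$, closed under direct summands, generated by $CH(\mF)$ with $\mF\in D_{K\times K}(\mB\times\mB)$, and since $\Av^K_{M\bar N}$ and $L_\gamma$ are triangulated while the target subcategories $D_{M\bar N\times K}(G)_{um}$ and $D_{MN\times K}(G)_{um}$ are closed under direct summands, cones and shifts, it suffices to prove the assertions for $\mM=CH(\mF)$. Since $p$ and $q$ are $G\times G$-equivariant and both $\Av^K_{M\bar N}$, $L_\gamma$ act through the left $G$-action (leaving the right $K$-factor alone), they commute with $q_!$ and $p^*$; using lemma \ref{LimFunBasicProp} this yields the natural isomorphisms
\[\Av^K_{M\bar N}(CH(\mF))\cong CH(\Av^K_{M\bar N}(\mF)),\quad L_\gamma(CH(\mF))\cong CH(L_\gamma(\mF)),\]
where on the right the functors act on the first factor of $\mB\times\mB$ via the $G$-conjugation action. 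The same equivariance shows that $CH$ commutes with $\oblv^{\bar P}_{M\bar N}$ and $\oblv^P_{MN}$, so $CH$ sends $um$-subcategories on $\mB\times\mB$ into those on $G$; the problem is thus reduced to verifying
\[\Av^K_{M\bar N}(\mF)\in D_{M\bar N\times K}(\mB\times\mB)_{um},\quad L_\gamma(\mF)\in D_{MN\times K}(\mB\times\mB)_{um}.\]

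To handle the first, one checks that $K\cap\bar P=M$ (using that $A$ is $\theta$-split, $P\cap\bar P=MA$, and $K\cap A$ is finite), so that $\Av^K_{\bar P}(\mF)\in D_{\bar P\times K}(\mB\times\mB)$ is well defined, and $\oblv^{\bar P}_{M\bar N}\Av^K_{\bar P}(\mF)$ tautologically lies in the $um$-subcategory. Using the factorization $\Av^K_{\bar P}\cong\sigma_!\Av^K_{M\bar N}$, where $\sigma$ denotes the $A$-torsor $\mB\times\mB/M\bar N\to\mB\times\mB/\bar P$, one obtains $\oblv^{\bar P}_{M\bar N}\Av^K_{\bar P}(\mF)\cong\sigma^*\sigma_!\Av^K_{M\bar N}(\mF)$ up to shift, and a projection-formula / Fourier argument along the torus $A$ would exhibit $\Av^K_{M\bar N}(\mF)$ as a direct summand. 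For $L_\gamma(\mF)$ the argument is parallel with $P$ in place of $\bar P$, and can alternatively be reduced to the first case via the formula $L_\gamma=\av^{M\bar N}_{MN}\circ\Av^K_{M\bar N}$ proved in theorem \ref{FormulaForJC} on the flag variety.

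The main technical obstacle is the direct-summand step: one must verify that $\Av^K_{M\bar N}(\mF)$ has trivial monodromy along the $A$-torsor $\sigma$, or equivalently that it lies in the subcategory generated by sheaves admitting a $\bar P$-equivariant lift. This hinges on a careful analysis of the interaction between averaging from $K$ to $M\bar N$ and the $A$-conjugation action on the first factor of $\mB\times\mB$.
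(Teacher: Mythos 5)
Your reduction to generators $\mM = CH(\mF)$, the commutation isomorphisms $\Av^K_{M\bar N}(CH(\mF))\cong CH(\Av^K_{M\bar N}(\mF))$ and $L_\gamma(CH(\mF))\cong CH(L_\gamma(\mF))$ (via properness of $q$, smoothness of $p$ and lemma \ref{LimFunBasicProp}), and the observation that $CH$ preserves the $um$-subcategories, all match the paper's argument. The problem is the last step, which you yourself flag as ``the main technical obstacle'': you try to prove that $\Av^K_{M\bar N}(\mF)$ and $L_\gamma(\mF)$ are $A$-monodromic on $\mB^2$ by averaging up to $\bar P$ and descending along the $A$-torsor $[M\bar N\backslash\mB^2]\to[\bar P\backslash\mB^2]$, but the ``projection-formula / Fourier'' step that would exhibit $\Av^K_{M\bar N}(\mF)$ as a summand of $\oblv^{\bar P}_{M\bar N}\Av^K_{\bar P}(\mF)$ requires precisely the unipotence of the $A$-monodromy you are trying to establish. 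As written the argument is circular, and the lemma is not proved.

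The paper closes this step with a much simpler observation: on the flag variety there is nothing left to prove, because \emph{every} object of $D_{MN\times K}(\mB^2)$ and of $D_{M\bar N\times K}(\mB^2)$ is $A$-monodromic. Indeed, by the Bruhat decomposition the $MN$-orbits (resp.\ $M\bar N$-orbits) on $\mB$ coincide with the $P$-orbits (resp.\ $\bar P$-orbits) --- this is recorded in the proof of lemma \ref{Lem:FlagIsMatsuki} --- and the relevant stabilizers are connected, so every $MN$-equivariant complex on $\mB^2$ is built from constant sheaves on orbits, which visibly lie in the image of $\oblv^P_{MN}$. Since $\Av^K_{M\bar N}(\mF)$ automatically lies in $D_{M\bar N\times K}(\mB^2)$ and $L_\gamma(\mF)$ in $D_{MN\times K}(\mB^2)$, the conclusion follows with no further work. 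Replacing your direct-summand step by this observation turns your outline into the paper's proof.
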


\begin{proof}
It suffices to show that $\Av_{M\bar N}^K(CH(\mF))$ and $L_\gamma(CH(\mF))$
are $A$-monodromic for any $\mF\in P_{K\times K}(\mB^2)$.
Notice that $CH=q_!p^*$ where $q$ is proper and $p$ is smooth. Thus we have $\Av_{M\bar N}^K(CH(\mF))\is CH(\Av_{M\bar N}^K(\mF))$,
and by lemma \ref{LimFunBasicProp} also $L_\gamma(CH(\mF))\is CH(L_\gamma(\mF))$.
Since $CH$ preserves $A$-monodromic subcategories and
$\Av_{M\bar N\times K}^K(\mF)\in D_{M\bar N\times K}(\mB^2)$, $L_\gamma(\mF)\in D_{MN\times K}(\mB^2)$ the
lemma follows from the fact that objects in $D_{MN\times K}(\mB^2)$ and $D_{M\bar N\times K}(\mB^2)$ are $A$-monodromic.
\end{proof}

Recall the following well-known fact:
Consider the adjunction \[\Av_{M\bar N}^{MN}:D_{MN}(G)\rightleftarrows D_{M\bar N}(G):
\av_{MN}^{M\bar N}.\]
Here we regard $G$ as a $G$-variety via the left action.

\begin{lemma}\label{long intertwining}
The pair of adjoint functors ($\Av_{M\bar N}^{MN}$, $\av_{MN}^{M\bar N}$)
defines an equivalence of categories between
$D_{MN}(G)_{um}$ and $D_{M\bar N}(G)_{um}$.
\end{lemma}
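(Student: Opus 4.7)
The plan is to prove this well-known ``long intertwining'' equivalence in the monodromic setting by adapting the strategy of the proof of Theorem \ref{Thm:AvIsEqui}: Braden's hyperbolic localization theorem applied to a $\bG_m$-action, with $A$-monodromicity playing the role that strict equivariance played there. Consider the $\bG_m$-action on $G$ by $t \cdot g = \gamma(t) g \gamma(t)^{-1}$: since $\gamma$ normalizes both $MN$ and $M\bar N$, this action preserves the left $MN$- and $M\bar N$-orbits. The $\bG_m$-fixed locus is $L = MA$, the attracting set is $P = LN$, and the repelling set is $\bar P = L\bar N$, giving rise to the standard Braden diagram of inclusions and contractions through $L$. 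Any sheaf in $D_{MN}(G)_{um}$ or $D_{M\bar N}(G)_{um}$ is by construction $A$-monodromic, hence $\bG_m$-monodromic via $\gamma$, so Braden's theorem provides the canonical isomorphisms $i_0^* j_0^! \cong (\pi_0)_* j_0^! \cong (\pi_\infty)_! j_\infty^* \cong i_\infty^! j_\infty^*$.

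I would then mimic the proof of Theorem \ref{Thm:AvIsEqui} to show the unit $\mathrm{id} \to \av_{MN}^{M\bar N} \circ \Av_{M\bar N}^{MN}$ is an isomorphism on $D_{MN}(G)_{um}$ (the counit handled dually). Using the commutative triangle
\[\xymatrix{\mathrm{id} \ar[r] \ar[rd] & \av_{MN}^{M\bar N} \circ \Av_{M\bar N}^{MN} \ar[d] \\ & \Av_{M\bar N}^{MN}}\]
the check reduces to verifying that $\mathrm{id} \to \oblv^{M\bar N}_M \circ \Ind^M_{M\bar N}$ ($*$-averaging along $\bar N$) and $\oblv^{MN}_M \circ \ind^M_{MN} \to \mathrm{id}$ ($!$-averaging along $N$) become isomorphisms after hyperbolic localization to $L$. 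As in \emph{loc. cit.}, the former follows from the third description $(\pi_\infty)_! j_\infty^*$ of $\mathrm{Loc}$, capturing the $\bar N$-direction as the attracting direction from $L$ into $\bar P$; the latter follows from the second description $(\pi_0)_* j_0^!$, capturing $N$ as the attracting direction from $L$ into $P$.

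The main obstacle, compared to Theorem \ref{Thm:AvIsEqui}, is that the $MN$- and $M\bar N$-orbits on $G$ are not transversal --- the tangent spaces $\mathfrak{m}+\mathfrak{n}$ and $\mathfrak{m}+\bar{\mathfrak{n}}$ miss the $\mathfrak{a}$-direction --- and the union $P \cup \bar P$ does not exhaust $G$, so $(G, MN, M\bar N, \gamma, G)$ fails to be a Matsuki datum in a literal sense. The $A$-monodromicity hypothesis is what makes the argument go through: by definition $D_{MN}(G)_{um}$ is generated as a triangulated subcategory by the essential image of $\oblv^P_{MN}$ from the $P$-equivariant category, where the $A$-direction is fully present in the equivariance; it therefore suffices to check the unit and counit isomorphisms on such generators, where the missing $\mathfrak{a}$-direction is restored and the Braden analysis on the big cells $P$ and $\bar P$ is conclusive. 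Carefully justifying this reduction to generators --- and in particular the compatibility of the $*$- and $!$-averaging functors with hyperbolic localization on $A$-monodromic sheaves --- is the principal technical step.
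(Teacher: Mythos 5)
First, a remark on the ground rules of this comparison: the paper offers \emph{no} proof of lemma \ref{long intertwining} --- it is introduced with ``Recall the following well-known fact'' --- so there is no argument in the text to measure yours against, and I can only assess your proposal on its own terms.

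On those terms there is a genuine gap, and it sits exactly at the point you flag as ``the principal technical step.'' The decisive move in the proof of theorem \ref{Thm:AvIsEqui} is not Braden's isomorphisms themselves but the reduction ``it is enough to check the unit after applying $Loc_{\calO}$ for every $\calO$,'' and that reduction rests on the joint conservativity of the hyperbolic localization functors. This conservativity is exactly what the Matsuki axioms buy: the attracting sets $\calO_0$ are the $H_0$-orbits, there are finitely many of them, and they cover $X$. In your setting the fixed locus of the conjugation action is $L$, the attracting and repelling sets are $\bar P$ and $P$ (in some order), and $P \cup \bar P$ is a \emph{proper} closed subset of $G$ whenever $P \neq G$ (two proper closed subvarieties cannot cover the irreducible variety $G$). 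Hyperbolic localization at $L$ therefore kills every sheaf supported off $P \cup \bar P$, and such sheaves exist in abundance inside your generating class: pick $g \notin P\bar P$ (possible since the open cell $P\bar P$ is proper, e.g.\ $g = w$ a nontrivial restricted Weyl element; for $G = SL_2$, $P=B$, one checks directly that $Bw \cap (B \cup \bar B) = \emptyset$), and take the constant sheaf on the left coset $Pg$ extended by zero. This object is $P$-equivariant, hence of the form $\oblv^P_{MN}(\calF)$ and in particular lies in $D_{MN}(G)_{um}$, yet all your localization functors vanish on it. So the reduction to generators, while legitimate in itself (the property of the unit being an isomorphism is stable under cones and retracts), does not repair the argument: the failure of conservativity already occurs on the generators, and nothing can be concluded from the localized statement.

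A secondary inaccuracy is the claim that conjugation by $\gamma(t)$ preserves the left $MN$-orbits: it sends $MNg$ to $MN\gamma(t)g\gamma(t)^{-1}$, which is in general a different orbit, so even the weak-monodromicity bookkeeping of theorem \ref{Thm:AvIsEqui} does not transport verbatim. More structurally, note that theorem \ref{Thm:AvIsEqui} holds on the \emph{full} equivariant categories with no monodromicity hypothesis, whereas lemma \ref{long intertwining} is false without the subscript $um$; any correct proof must therefore use the unipotence of the $A$-monodromy in an essential way, and Braden's theorem is blind to that hypothesis. The statement is really the invertibility of the long intertwining functor on unipotently monodromic categories: one identifies $\av^{M\bar N}_{MN}\circ\Av^{MN}_{M\bar N}$ with convolution against an explicit kernel (supported on the open cell) and shows this kernel is invertible on the monodromic subcategory, e.g.\ by a filtration of $D_{MN}(G)_{um}$ coming from the $A$-monodromy, or by descending to $G/B$ where the transversality does hold and then controlling the $A$-direction separately. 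That is a different mechanism from the one you propose, and it is presumably why the authors chose to cite the statement rather than derive it from their transversality machinery.
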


\quash{

We will need the following lemma later in \S\ref{Main formula for AD}.
\begin{lemma}\label{stratification}
Let $X$ be a smooth $G$-variety and $Y$ a smooth variety.
Assume that there are finitely many $G$-orbits in $X$.
Then any $G$-invariant stratification $S$ of $X\times Y$ (here $G$ acts only on $X$)
has a refinement of the form $S_G\times S_Y$, where
$S_G$ is the $G$-orbits stratification of $X$ and $S_Y$ is a stratification of $Y$.
\end{lemma}
\begin{proof}
Let $\Lambda=T_S^*(X\times Y)$ be the co-normal bundle of $S$ inside $X\times Y$.
It is enough to show that $\Lambda\subset T_{S_G}^*X\times T_{S_Y}^*Y$, for
some stratification $S_Y$ of $Y$. Let $\Lambda_X$ (resp. $\Lambda_Y$)
be the projection of
$\Lambda$ to $T^*X$ (resp. $T^*Y$).
Then the $G$-invariance of $S$ implies $\Lambda_X\subset T^*_{S_G}X$, in particular,
$\Lambda_X$ is isotropic. Since $\Lambda$ is isotropic in $T^*X\times T^*Y$,
it implies $\Lambda_Y$ is conic and isotropic in $T^*Y$.
We claim that  there exists a stratification $S_Y$ of $Y$ such that
$\Lambda_Y\subset T^*_{S_Y}(Y)$, hence
$\Lambda\subset T_{S_G}^*X\times T_{S_Y}^*Y$. The lemma follows.

Proof of the claim. We follow the argument in \cite[Appendix E.3]{HTT}.
Let $\Lambda_Y=\cup\Lambda_{Y,i}$ be the irreducible decomposition of $\Lambda_Y$ and set
$Z_i=\pi(\Lambda_{Y,i})$, where $\pi:T^*Y\ra T$ is the projection map.
Then we can take a Whitney stratification $S_Y$ of $Y$ such that
$Z_i$ is a union of strata for each $i$. Note that, for each $i$, there exists a stratum $S_i\in S_Y$
such that $S_i\subset Z_i^{sm}$ is open dense in $Z_i$. Hence by \cite[lemma 1.3.27]{CG}, we have
$\Lambda_{Y,i}\subset \overline{T^*_{Z_i^{sm}}Y}=\overline{T^*_{S_i}Y}$ and
$\Lambda_Y=\cup\Lambda_{Y,i}\subset\cup T^*_{S_i}Y=T^*_{S_Y}Y$.
This finishes the proof of the claim.
\end{proof}
}

\subsection{The formula}\label{Main formula for AD}

\quash{
above
limit functor $L_\gamma$ can be identified with
the limit functor $L_f:D(K\backslash G)\ra D(MN\backslash G)$
for the limit scheme
$f:\widetilde K\backslash\bA^1\times G\ra\bA^1$. }

\quash{

\begin{lemma}\label{mon}
Let $X=G/K$ be the symmetric space.
For any $\mF\in D_{K}(X)$, the object $\Phi_X(\mF)\in D_{MN}(X)$ is $A$-monodromic.
\end{lemma}
\begin{proof}
We will identify the limit functor $\Phi_X$ with the specialization functor in \S\ref{},
and being $A$-monodromic follows from the general properties of specialization functor.
{\red This is not a good proof because we need to assume $G$ is adjoint
in order to consider its compactification.
Can we prove this directly from the definition of $\Phi_X$?
Also is it true for any
$G$-varities $Y$?}
\end{proof}
}

We establish a "transversal fully faithfulness" property for the limit functor in our setting (proposition \ref{Key AM}), and use it to prove the formula for the limit functor, expressing it as a composition of two averaging functors (theorem \ref{formula for L}). The idea of the proof of proposition \ref{Key AM} is that character sheaves, although not living on a proper variety, do "arise", via $CH$, from sheaves on a proper variety, where we do have the "transversal fully faithfulness" property.

\begin{prop}\label{Key AM}

Let $\mM\in D\mathcal{CS}(X/K)$ and $\mE\in D_{M\bar N\times K}(G)_{um}$. Then the natural map \[\Hom_{D_M(G)}(\mM,\mE)\ra\Hom_{D_M(G)}(L_\gamma(\mM),L_\gamma(\mE))\]
is an isomorphism.

\end{prop}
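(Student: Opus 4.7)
The plan is to reduce the claim to a transversal fully faithfulness statement on the proper variety $\mB^2$, exploiting the defining property that character sheaves are generated, via the horocycle transform, by sheaves on $\mB^2$.

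First, since both $\Hom$-functors are triangulated in $\mM$ and closed under direct summands, the definition of $D\mathcal{CS}(X/K)$ (Definition \ref{admis sheaves}) reduces the claim to the case $\mM = CH(\mF)$ for some $\mF \in D_{K\times K}(\mB^2)$. Writing $CH = q_! p^*$ with $q : G \times \mB \ra G$ smooth and proper and $p : G \times \mB \ra \mB^2$ smooth, Lemma \ref{LimFunBasicProp} provides a natural isomorphism $L_\gamma \circ CH \is CH \circ L_\gamma$. Combined with the adjunction $CH \dashv HC$, where $HC := p_* q^!$, the statement becomes the assertion that the natural map
\[ \Hom_{D_M(\mB^2)}(\mF, HC(\mE)) \ra \Hom_{D_M(\mB^2)}(L_\gamma \mF, HC(L_\gamma \mE)) \]
is an isomorphism.

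I then factor this map through $\Hom_{D_M(\mB^2)}(L_\gamma \mF, L_\gamma HC(\mE))$ via the natural transformation $L_\gamma \circ HC \ra HC \circ L_\gamma$. The first factor is an isomorphism by the relative transversal fully faithfulness (Theorem \ref{relative version}) applied on $\mB^2$: the sheaf $\mF$ is $K$-weakly-monodromic along the first factor (from its $K\times K$-equivariance), $HC(\mE)$ is $M\bar N$-weakly-monodromic along the first factor (inheriting $M\bar N \times K$-equivariance from $\mE$), the transversality of $K$-orbits and $M\bar N$-orbits on $\mB$ is exactly the content of $K$ being adapted to the Matsuki datum (Lemma \ref{Lem:FlagIsMatsuki}), and $M = Z_K(A)$ centralizes $\gamma(\bG_m)$. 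Taking the second factor of $\mB^2$ as the auxiliary variety $Z$ in Theorem \ref{relative version} (and using K\"unneth to split appropriately), the hypotheses are satisfied.

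The remaining---and technically central---step is to show that the natural map $L_\gamma HC(\mE) \ra HC(L_\gamma \mE)$ is an isomorphism for $\mE \in D_{M\bar N \times K}(G)_{um}$. Since $q$ is smooth and proper, $L_\gamma$ commutes with $q^!$ automatically by Lemma \ref{LimFunBasicProp}, so the problem reduces to the commutation $L_\gamma \circ p_* \is p_* \circ L_\gamma$ on $q^! \mE$. This is the main obstacle: since $p$ is not proper, the commutation is not automatic. The expected resolution leverages the $A$-monodromicity of $\mE$ granted by Lemma \ref{A mon}, together with the structure of the fibers of $p$ (which are principal homogeneous spaces for conjugates of a Borel subgroup containing $A$): on $A$-monodromic sheaves the $*$- and $!$-pushforwards along these fibers agree up to a cohomological shift, so $p_*$ can be replaced by $p_!$ on $q^! \mE$, which commutes with $L_\gamma$ by proper base change for the constant limit scheme.
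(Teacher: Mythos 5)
Your overall strategy is the paper's: reduce to $\mM=CH(\mF)$, commute $L_\gamma$ past $CH=q_!p^*$ (legitimate, since $q$ is proper and $p$ is smooth), pass to $\mB^2$ by the adjunction $CH\dashv HC$, and invoke the relative transversal fully faithfulness (theorem \ref{relative version}) there. Up to that point the proposal is sound (modulo the diagram-chase verifying that the composite of these identifications really is the natural map in question, which the paper isolates as lemma \ref{kernel}). The problem is your treatment of the remaining arrow $L_\gamma HC(\mE)\to HC(L_\gamma\mE)$. You propose to reduce it to $L_\gamma\circ p_*\is p_*\circ L_\gamma$ on $q^!\mE$, replace $p_*$ by $p_!$ (up to shift) using monodromicity along the fibers of $p$, and then conclude ``by proper base change for the constant limit scheme.'' This last step fails: the nearby cycles functor commutes with $\phi_*$ only for \emph{proper} $\phi$ and with $\phi^*$ only for \emph{smooth} $\phi$ (lemma \ref{LimFunBasicProp}); for a non-proper map such as $p$, neither $p_*$ nor $p_!$ commutes with $\psi_f$ for free, and the $p_!$ version is Verdier-dual to the $p_*$ version, so trading one for the other gains nothing. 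Moreover the asserted identity $p_*\is p_![2\dim B]$ on $q^!\mE$ would itself require an argument relating the left $B_1$-torsor structure of the fibers of $p$ to the $M\bar N\times K$-equivariance and $A$-monodromicity of $\mE$, which is not supplied by any of the quoted lemmas.

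The paper avoids this entirely. Since $\mE\in D_{M\bar N\times K}(G)_{um}$ and $HC$ preserves the $A$-monodromic subcategories, both $\mE$ and $HC(\mE)$ are $A$-monodromic, hence $\bG_m$-weakly-monodromic via $\gamma$. By lemma \ref{Remark_LimitOfWM}, $L_\gamma$ is canonically isomorphic to the identity on $\bG_m$-weakly-monodromic objects, so $L_\gamma(\mE)\is\mE$ and $L_\gamma(HC(\mE))\is HC(\mE)$, and under these identifications the canonical comparison map $L_\gamma(HC(\mE))\to HC(L_\gamma(\mE))$ becomes the identity of $HC(\mE)$; in particular it is an isomorphism, with no need to commute $L_\gamma$ past $p_*$. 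If you replace your final step by this observation (and supply the commutativity check of lemma \ref{kernel}), the rest of your argument goes through.
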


\begin{proof}

Since $D\mathcal{CS}(X/K)$ is generated by perverse constituents of
$CH(\mF)$ ($\mF\in P_{K\times K}(\mB^2)$), and the property of the arrow in the theorem to be an isomorphism, as a property of $\mM$, is stable under retracts and cones, it is enough to show that
\[\Hom_{D_M(G)}(CH(\mF),\mE) \ra \Hom_{D_M(G)}(L_\gamma(CH(\mF)),L_\gamma(\mE))\]
is an isomorphism.

In lemma \ref{kernel} below, we show that the following diagram is commutative
\begin{equation}\label{diagram}
\xymatrix{\Hom_{D_M(G)}(CH(\mF),\mE)\ar[r]^{f\ \ \ \ \ }\ar[ddd]^{f_1}&\Hom_{D_M(G)}(L_\gamma(CH(\mF)),L_\gamma(\mE))\ar[d]^{f_5}\\&\Hom_{D_M(G)}(CH(L_\gamma(\mF)),L_\gamma(\mE))\ar[d]^{f_4}
\\&\Hom_{D_M(\mB^2)}(L_\gamma(\mF),HC(L_\gamma(\mE))
\\\Hom_{D_M(\mB^2)}(\mF,HC(\mE))\ar[r]^{f_2\ \ \ \ }&\Hom_{D_M(\mB^2)}(L_\gamma(\mF),L_\gamma(HC(\mE))\ar[u]_{f_3}
},\end{equation}
where $f_i$ are the canonical morphisms, and $f$ is the map in question.

We now claim that each $f_i$ is an isomorphism (and hence deduce that $f$ is an isomorphism):
1) The maps $f_1$, $f_4$ are adjunction isomorphisms.
2) Since $CH=q_!p^*$ where $q$ is proper and $p$ is smooth, the map $f_5$ is an isomorphism by lemma \ref{LimFunBasicProp}.
3) For $f_3$, we first observe that both $\mE$ and $HC(\mE)$ are $A$-monodromic. Since the
limit functor $L_\gamma$
is isomorphic to the identity functor on the $A$-monodromic subcategory (see lemma \ref{Remark_LimitOfWM}), we have
$\mE\is L_\gamma(\mE)$, $HC(\mE)\is L_\gamma(HC(\mE))$ and under
those isomorphisms the canonical map
$L_\gamma(HC(\mE))\ra HC(L_\gamma(\mE)) $
becomes the identity morphism on $HC(\mE)$. This implies that $f_3$ is an isomorphism.
4) Finally, let us prove that $f_2$ is an isomorphism.
For this, we observe that there are finitely many $K$-orbits (resp. $M\bar N$-orbits) on $\mB$.
Thus, it is clear that $\mF$ (resp. $HC(\mE)$) 
has the form $\mF=\mF_1\boxtimes\mF_2$ (resp. $HC(\mE)=\mG_1\boxtimes\mG_2$), where $\calF_1\in D_{K}(\mB)$ (resp. $\calG_1\in D_{M\bar N}(\mB)$)
Hence, theorem \ref{relative version} (applied to the case $X=Z=\mB$) implies that $f_2$ is an isomorphism (recall that by lemma \ref{Lem:FlagIsMatsuki} and remark \ref{Rem:AdaptedIsLimitGood}, we have the required transversality to apply theorem \ref{relative version}).

\quash{
Proof of 2).
Consider $\mM \in\mA_K(G)$ and $\mE \in D_{\bar U\rtimes\fa}(G)_{um}$. We have:
\beqn
 \Hom_{D_{\bar U}(G)} (Av^{K}_{\bar U} \mM , \mE) \cong\Hom_{D(G)} ( \niceF , \mE) \cong\Hom_{D(G)}(L_{\gamma} \mM, L_{\gamma} \mE)
\cong Hom_{D(G)} (L_{\gamma} \mM, \mE)
\eeqn
\beqn
 \cong Hom_{D_{\bar U}(G)} (Av_{\bar U}^{MN}\circ L_{\gamma} \mM , \mE ).
\eeqn
Here, the first equality is by adjunction and
$Av^{K}_{\bar U} \mM\in D_{\bar U\rtimes\fa}(G)$ (see lemma \ref{A mon}), the second is by part 1), the third is by \ref{LimitFixed} and the fourth is by adjunction and
$Av_{\bar U}^{MN}\circ L_{\gamma}(\mM)\in D_{\bar U\rtimes\fa}(G)$ (see lemma \ref{A mon}).
But this overall equality means, by Yoneda lemma:
$Av^{MN}_{\bar U} \circ L_{\gamma} \cong Av^{K}_{\bar U}$.

}


\end{proof}

In the course of the above proof, we needed the following lemma:

\begin{lemma}\label{kernel}
Diagram (\ref{diagram}) commutes.
\end{lemma}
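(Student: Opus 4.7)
The plan is to reduce the commutativity of the outer pentagon to a single ``mate identity'' between the natural transformations underlying $f_5$ and $f_3$, and then verify this identity by decomposing everything through $CH = q_!p^*$ and $HC = p_*q^!$.

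First, I would fix $\phi \in \Hom_{D_M(G)}(CH(\mathcal F), \mathcal E)$ and chase it along both paths explicitly. Write $\alpha_{\mathcal F}: CH(L_\gamma\mathcal F) \xrightarrow{\sim} L_\gamma(CH(\mathcal F))$ for the inverse of the natural isomorphism from lemma \ref{LimFunBasicProp} that defines $f_5$, write $\beta_{\mathcal E}: L_\gamma(HC(\mathcal E)) \to HC(L_\gamma \mathcal E)$ for the natural transformation defining $f_3$, and let $\eta, \epsilon$ be the unit and counit of the adjunction $(CH, HC)$. The outer (``top'') path sends $\phi$ to
\[
HC(L_\gamma\phi)\circ HC(\alpha_{\mathcal F})\circ \eta_{L_\gamma\mathcal F},
\]
while the inner (``bottom'') path sends $\phi$ to
\[
\beta_{\mathcal E}\circ L_\gamma HC(\phi)\circ L_\gamma(\eta_{\mathcal F}).
\]
By naturality of $\beta$ applied to $\phi$, the latter equals $HC(L_\gamma\phi)\circ \beta_{CH\mathcal F}\circ L_\gamma(\eta_{\mathcal F})$.

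Thus commutativity of the pentagon reduces to the single identity
\[
HC(\alpha_{\mathcal F})\circ \eta_{L_\gamma\mathcal F}\ =\ \beta_{CH\mathcal F}\circ L_\gamma(\eta_{\mathcal F}),
\]
which is precisely the statement that $\beta$ is the ``mate'' of $\alpha$ with respect to the adjunction $(CH, HC)$. To verify this, I would decompose everything. The adjunction $(CH, HC) = (q_!p^*, p_*q^!)$ factors as a composition of the adjunctions $(p^*, p_*)$ and $(q_!, q^!)$, with unit $\eta$ obtained as $\mathrm{id}\to p_*p^*\to p_*q^!q_!p^*$. The natural transformation $\alpha$ is the composition of the smooth base change isomorphism $L_\gamma p^*\xrightarrow{\sim}p^*L_\gamma$ with the proper base change isomorphism $L_\gamma q_!\xrightarrow{\sim}q_!L_\gamma$ (both from lemma \ref{LimFunBasicProp}), while $\beta$ is the composition $L_\gamma p_*q^!\to p_*L_\gamma q^!\xrightarrow{\sim}p_*q^!L_\gamma$. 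Here the canonical map $L_\gamma p_*\to p_*L_\gamma$ is by construction the mate of $L_\gamma p^*\xrightarrow{\sim}p^*L_\gamma$ under the adjunction $(p^*, p_*)$, and similarly the iso $L_\gamma q^!\xrightarrow{\sim}q^!L_\gamma$ is the mate of $L_\gamma q_!\xrightarrow{\sim}q_!L_\gamma$ under $(q_!, q^!)$.

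With these identifications the mate identity becomes a purely formal statement about mate correspondence being compatible with composition of adjunctions, which is a standard fact: breaking the identity into the two triangles corresponding to the units of $(p^*, p_*)$ and $(q_!, q^!)$, each triangle is the defining diagram of the mate of the corresponding base-change isomorphism. The main obstacle, and the only real work, is bookkeeping: keeping track of the six natural transformations (the two isos in $\alpha$, the map and iso in $\beta$, and the two units making up $\eta$) and verifying that the resulting pasting diagram decomposes correctly into elementary naturality squares. Once this is organized, no further geometric input is needed; the lemma is a formal identity in a 2-categorical setting.
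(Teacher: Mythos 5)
Your proposal is correct and follows essentially the same route as the paper: reduce the outer pentagon, via naturality of $L_\gamma HC \to HC L_\gamma$ applied to the morphism, to the single compatibility square $HC(\alpha_{\mathcal F})\circ\eta_{L_\gamma\mathcal F}=\beta_{CH\mathcal F}\circ L_\gamma(\eta_{\mathcal F})$ (diagram (\ref{diagram 1})), and then verify it by factoring $\eta$ as $\id\to p_*p^*\to p_*q^!q_!p^*$ and decomposing into elementary squares. Your ``mate identity'' packaging is just a more abstract phrasing of the paper's explicit $3\times 3$ grid (\ref{diagram 2}); the one input you describe as holding ``by construction'' --- that the exchange morphisms for $p_*$ and $p^*$ (resp.\ $q_!$ and $q^!$) are mates of one another --- is precisely what the paper records and invokes as the commuting diagrams in lemma \ref{LimFunBasicProp}.
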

\begin{proof}
Let $v:CH(\mF)\ra\mE$. Then its image $f_3\circ f_2\circ f_1(v)$ is equal to
\[L_{\gamma}\mF\ra L_{\gamma}\circ HC\circ CH(\mF)\ra L_{\gamma}\circ HC(\mE)\ra HC\circ L_{\gamma}(\mE)\]
where the first arrow is induced by the co-unit map,
the second arrow is induced by $v$,
the third arrow is induced by the natural transformation $L_{\gamma}\circ HC\ra HC\circ L_{\gamma}$.

On the other hand,  $f_4\circ f_5\circ f(v)$ is equal to
\[L_{\gamma}\mF\ra HC\circ CH\circ L_{\gamma}(\mF)\ra HC\circ L_{\gamma}\circ
CH(\mF)\ra HC\circ L_{\gamma}(\mE)\]
where the first arrow is induced by the co-unit map,
the second arrow is induced by the natural transformation $CH\circ L_{\gamma}\ra L_{\gamma}\circ CH$,
the third arrow is induced by $v$.

Notice that, since $L_{\gamma}\circ HC\ra HC\circ L_{\gamma}$ is a natural transformation, we have
the following commutative diagram
\[\xymatrix{L_{\gamma}\circ HC\circ CH(\mF)\ar[r]\ar[d]&L_{\gamma}\circ HC(\mE)\ar[d]
\\ HC\circ L_{\gamma}\circ CH(\mF)\ar[r]&HC\circ L_{\gamma}(\mE)
}.\]
Thus to show that $f_3\circ f_2\circ f_1(v)=f_4\circ f_5\circ f(v)$, it suffices to show
\begin{equation}\label{diagram 1}
\xymatrix{L_{\gamma}(\mF)\ar[r]\ar[d]&L_{\gamma}\circ HC\circ CH(\mF)\ar[d]
\\ HC\circ CH\circ L_{\gamma}(\mF)\ar[r]&HC\circ L_{\gamma}\circ CH(\mF)
}\end{equation}
is commutative.

Recall that $CH=q_!p^*$, $HC=p_*q^!$ and
the natural transformation  $id\ra HC\circ CH=p_*q^!q_!p^*$ factors as
\[id\ra p_*p^*\ra p_*q^!q_!p^*.\]
Using the properties of the limit functor in lemma \ref{LimFunBasicProp}
we have the following diagram
\begin{equation}\label{diagram 2}
\xymatrix{L_{\gamma}(\mF)\ar[r]\ar[d]&L_{\gamma}(p_*p^*(\mF))\ar[d]\ar[r]&L_{\gamma}(p_*q^!q_!p^*(\mF))
\ar[d]\\
p_*p^*L_{\gamma}(\mF)\ar[d]\ar[r]&p_*L_{\gamma}(p^*\mF)\ar[d]\ar[r]&p_*L_{\gamma}(q^!q_!(p^*(\mF))\ar[d]
\\
p_*q^!q_!p^*L_{\gamma}(\mF)\ar[r]&p_*q^!q_!L_{\gamma}(p^*(\mF))\ar[r]&p_*q^!L_{\gamma}(q_!p^*(\mF))}.
\end{equation}
Notice that the outer diagram in (\ref{diagram 2}) is equal to the diagram (\ref{diagram 1}), thus
to show that (\ref{diagram 1}) is commutative it is enough to show that each
of the small diagrams in (\ref{diagram 2}) is commutative. Now the commutativity of
the upper right and lower left diagrams follow from the naturality of the natural transformations
$L_{\gamma} p_*\ra p_*L_{\gamma}$, $id\ra q^!q_!$. The commutativity of the upper left and
lower right diagrams follows from part 3) of lemma \ref{LimFunBasicProp}. This finishes the proof of the lemma.

\quash{
We will prove this by constructing an arrow
$c:\Phi_\mB\circ HC\circ CH(\mF)\ra HC\circ CH\circ\Phi_\mB(\mF)$
making the following diagram commutes
\[\xymatrix{\Phi_\mB(\mF)\ar[r]\ar[d]&\Phi_\mB\circ HC\circ CH(\mF)\ar[d]\ar[ld]_c
\\ HC\circ CH\circ\Phi_\mB(\mF)\ar[r]&HC\circ\Phi_X\circ CH(\mF)
}.\]
For this, let $Z=(G/B_K)\times_X(G/B_K)$ and $p_l,p_r:Z\ra G/B_K$ be the left and right projections.
Let us first construct a natural transformation from
Consider the kernel }
\end{proof}

Finally, here is the main theorem of this paper:
\begin{thm}\label{formula for L}
We have an isomorphism of functors $D\mathcal{CS}(X/K) \ra D_{MN\times K}(G)$:
\[L_{\gamma} \is \av_{MN}^{M\bar N}\circ \Av_{M\bar N}^K.\]

\end{thm}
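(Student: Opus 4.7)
The plan is to follow the blueprint of Theorem \ref{Thm:FormulaMatsuki}: combine the ``transversal fully faithfulness'' input with the equivalence between $A$-monodromic subcategories via the long intertwining functor. The key technical input, namely Proposition \ref{Key AM}, has already been established; what remains is essentially a formal manipulation with adjunctions.

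More concretely, I would first prove the intermediate identity $\Av_{M\bar N}^{MN}\circ L_\gamma \is \Av_{M\bar N}^K$ as functors $D\mathcal{CS}(X/K)\to D_{M\bar N\times K}(G)_{um}$. For $\mM\in D\mathcal{CS}(X/K)$ and $\mE\in D_{M\bar N\times K}(G)_{um}$, I would build the chain of natural isomorphisms
\begin{align*}
\Hom_{D_{M\bar N}(G/K)}(\Av_{M\bar N}^K\mM,\mE)
&\is \Hom_{D_M(G/K)}(\mM,\mE) \\
&\is \Hom_{D_M(G/K)}(L_\gamma\mM,L_\gamma\mE) \\
&\is \Hom_{D_M(G/K)}(L_\gamma\mM,\mE) \\
&\is \Hom_{D_{M\bar N}(G/K)}(\Av_{M\bar N}^{MN}L_\gamma\mM,\mE),
\end{align*}
where the first and last isomorphisms come from the adjunction defining the averaging functors, the second is Proposition \ref{Key AM} (note that $\mE\in D_{M\bar N\times K}(G)_{um}$ is in particular in $D_{M\bar N\times K}(G)_{um}$, which is what Proposition \ref{Key AM} requires), and the third uses Lemma \ref{Remark_LimitOfWM} together with the fact that $\mE$, being $A$-monodromic, is $\mathbb{G}_m$-weakly-monodromic via $\gamma$, so $L_\gamma\mE\is\mE$. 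The Yoneda lemma then yields the claimed identity between the functors $\Av_{M\bar N}^{MN}\circ L_\gamma$ and $\Av_{M\bar N}^K$.

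To conclude, I invoke Lemma \ref{A mon}, which guarantees that $L_\gamma(\mM)\in D_{MN\times K}(G)_{um}$ whenever $\mM\in D\mathcal{CS}(X/K)$, so that the long intertwining equivalence of Lemma \ref{long intertwining} applies. Applying $\av_{MN}^{M\bar N}$ to both sides of $\Av_{M\bar N}^{MN}\circ L_\gamma \is \Av_{M\bar N}^K$ and using that $\av_{MN}^{M\bar N}\circ\Av_{M\bar N}^{MN}$ is the identity on $D_{MN\times K}(G)_{um}$, I obtain
\[L_\gamma \is \av_{MN}^{M\bar N}\circ\Av_{M\bar N}^{MN}\circ L_\gamma \is \av_{MN}^{M\bar N}\circ\Av_{M\bar N}^K,\]
as desired. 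The main conceptual difficulty was already absorbed into Proposition \ref{Key AM} (which reduced the transversality statement from character sheaves to the proper setting of $\mathcal{B}^2$ via the horocycle correspondence); the present step is a straightforward formal consequence of that proposition and the standard properties of averaging and monodromic sheaves.
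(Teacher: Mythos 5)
Your proposal is correct and follows essentially the same route as the paper: the same four-step chain of isomorphisms (adjunction, Proposition \ref{Key AM}, Lemma \ref{Remark_LimitOfWM}, adjunction), Yoneda, and then the long intertwining equivalence of Lemma \ref{long intertwining}. The only point worth making explicit is that the Yoneda step, which tests only against objects $\mE$ in the monodromic subcategory, requires knowing that \emph{both} $\Av^K_{M\bar N}\mM$ and $\Av^{MN}_{M\bar N}L_\gamma\mM$ already lie in $D_{M\bar N\times K}(G)_{um}$ --- this is exactly what Lemma \ref{A mon} supplies, and the paper invokes it at the adjunction steps, not only at the final application of $\av^{M\bar N}_{MN}$.
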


\begin{corollary}
The functor $\av_{MN}^{M\bar N}\circ \Av_{M\bar N}^K : D\mathcal{CS}(X/K) \to D_{MN\times K}(G)$ is $t$-exact.
\end{corollary}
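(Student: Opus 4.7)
The plan is to deduce the formula via a Yoneda-style argument from the transversal fully faithfulness established in proposition \ref{Key AM}, in direct analogy with the proof of theorem \ref{Thm:FormulaMatsuki}. Concretely, I would fix $\mathcal{M} \in D\mathcal{CS}(X/K)$ and $\mathcal{E} \in D_{M\bar N \times K}(G)_{um}$, and chain together four canonical isomorphisms:
\begin{equation*}
\Hom(\Av_{M\bar N}^K \mathcal{M}, \mathcal{E}) \;\cong\; \Hom_{M}(\mathcal{M}, \mathcal{E}) \;\cong\; \Hom_M(L_\gamma \mathcal{M}, L_\gamma \mathcal{E}) \;\cong\; \Hom_M(L_\gamma \mathcal{M}, \mathcal{E}) \;\cong\; \Hom(\Av_{M\bar N}^{MN} L_\gamma \mathcal{M}, \mathcal{E}).
\end{equation*}
The first and last are the adjunction isomorphisms from \S\ref{averaging functor}; the second is precisely proposition \ref{Key AM} (which requires $\mathcal{E}$ to be $A$-monodromic, hence the assumption $\mathcal{E}\in D_{M\bar N\times K}(G)_{um}$); and the third uses lemma \ref{Remark_LimitOfWM}, together with the fact noted in lemma \ref{A mon} that $L_\gamma(\mathcal{M})$ lands in the $A$-monodromic subcategory, so that the limit functor acts as the identity on $\mathcal{E}$.

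Next, I would invoke Yoneda on the category $D_{M\bar N \times K}(G)_{um}$: since the target of $\Av_{M\bar N}^K$ and of $\Av_{M\bar N}^{MN}\circ L_\gamma$ both land there (by lemma \ref{A mon}), the chain of natural isomorphisms yields an isomorphism of functors
\begin{equation*}
\Av_{M\bar N}^{MN}\circ L_\gamma \;\cong\; \Av_{M\bar N}^K \colon D\mathcal{CS}(X/K) \longrightarrow D_{M\bar N\times K}(G)_{um}.
\end{equation*}
Finally I would apply $\av_{MN}^{M\bar N}$ to both sides and use lemma \ref{long intertwining}, which states that $(\Av_{M\bar N}^{MN},\av_{MN}^{M\bar N})$ is an equivalence between the unipotent-monodromic subcategories. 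Since $L_\gamma(\mathcal{M}) \in D_{MN\times K}(G)_{um}$ by lemma \ref{A mon}, the composition $\av_{MN}^{M\bar N}\circ \Av_{M\bar N}^{MN}$ is naturally isomorphic to the identity on its image, yielding the desired formula $L_\gamma \cong \av_{MN}^{M\bar N}\circ \Av_{M\bar N}^K$.

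The corollary follows immediately: by lemma \ref{LimFunBasicProp}(1), $L_\gamma$ is $t$-exact with respect to the perverse $t$-structure, so the isomorphic composition $\av_{MN}^{M\bar N}\circ \Av_{M\bar N}^K$ is $t$-exact on $D\mathcal{CS}(X/K)$. I do not expect any serious obstacle here: all the genuinely technical content has been concentrated into proposition \ref{Key AM} and lemma \ref{long intertwining}, and the remaining argument is essentially a formal manipulation of adjunctions combined with the observation that the relevant sheaves are $A$-monodromic so that $L_\gamma$ and the long intertwining equivalence behave as identities where needed. The only point requiring care is verifying that each step stays within the correct subcategory (so that Yoneda may be applied), which is exactly what lemma \ref{A mon} ensures.
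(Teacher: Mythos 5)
Your argument is correct and coincides with the paper's: the corollary is deduced from the formula $L_{\gamma}\cong\av_{MN}^{M\bar N}\circ\Av_{M\bar N}^{K}$ (theorem \ref{formula for L}), which the paper proves by exactly the chain of adjunction isomorphisms, proposition \ref{Key AM}, lemma \ref{Remark_LimitOfWM}, Yoneda, and lemma \ref{long intertwining} that you describe, followed by the $t$-exactness of $L_\gamma$ from lemma \ref{LimFunBasicProp}. No gaps.
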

The same argument as in the poof of corollary \ref{adjoint} gives:
\begin{corollary}
The functor $L_\gamma$ admits both a left and right adjoint.
\end{corollary}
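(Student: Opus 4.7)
The plan is to follow the proof of Corollary~\ref{adjoint} essentially verbatim, with Lemma~\ref{long intertwining} taking the role that Theorem~\ref{Thm:AvIsEqui} played there. First, I will produce a companion formula for $L_\gamma$ by applying Verdier duality to Theorem~\ref{formula for L}. Since $L_\gamma$ commutes with $\bbD$ (Lemma~\ref{lemma_BasicPropOfL}), since $\bbD$ interchanges the averaging functors $\Av$ and $\av$, and since $D\mathcal{CS}(X/K)$ is stable under $\bbD$ (the generating functor $CH = q_! p^*$ is compatible with duality because $p$ is smooth and $q$ is proper), dualizing the formula in Theorem~\ref{formula for L} yields
\[
L_\gamma \;\cong\; \Av_{MN}^{M\bar N} \circ \av_{M\bar N}^K.
\]

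The second step is to assemble three auxiliary ingredients. Lemma~\ref{A mon} guarantees that for $\mM \in D\mathcal{CS}(X/K)$ the objects $\Av_{M\bar N}^K(\mM)$, $\av_{M\bar N}^K(\mM)$ and $L_\gamma(\mM)$ all lie in the relevant $A$-monodromic subcategories, so both formulas for $L_\gamma$ actually factor through these. Lemma~\ref{long intertwining} tells us that $\av_{MN}^{M\bar N}$ and $\Av_{MN}^{M\bar N}$ are mutually inverse equivalences between $D_{M\bar N \times K}(G)_{um}$ and $D_{MN \times K}(G)_{um}$. Finally, Lemma~\ref{Av and oblv} supplies the standard adjunctions: $\Av_{M\bar N}^K$ admits a right adjoint $\av^{M\bar N}_K$, and $\av_{M\bar N}^K$ admits a left adjoint $\Av^{M\bar N}_K$.

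Combining these, the right adjoint of $L_\gamma$, read off from the first formula, is $\av^{M\bar N}_K \circ \Av_{MN}^{M\bar N}$, where the second factor plays the role of the right adjoint to $\av_{MN}^{M\bar N}$ on the $A$-monodromic subcategory via Lemma~\ref{long intertwining}; symmetrically, from the second formula the left adjoint is $\Av^{M\bar N}_K \circ \av_{MN}^{M\bar N}$. The main point requiring care is that Lemma~\ref{long intertwining} only provides an equivalence between the $A$-monodromic subcategories, not between the full categories. This is handled by observing that $L_\gamma$ takes values in $D_{MN \times K}(G)_{um}$: testing the adjointness relations against an arbitrary $\mN \in D_{MN \times K}(G)$ reduces, after replacing $\mN$ by its $A$-monodromic part, to the monodromic setting where Lemma~\ref{long intertwining} applies cleanly. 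Modulo this reduction the argument is a direct transcription of the proof of Corollary~\ref{adjoint}.
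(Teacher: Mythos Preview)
Your proof is correct and follows exactly the approach the paper indicates: the paper's proof consists solely of the sentence ``The same argument as in the proof of corollary~\ref{adjoint} gives'', and you have spelled out precisely that argument, substituting Lemma~\ref{long intertwining} for Theorem~\ref{Thm:AvIsEqui} and Lemma~\ref{A mon} to ensure everything lands in the $A$-monodromic subcategories. You are in fact more careful than the paper in flagging the subtlety that Lemma~\ref{long intertwining} only furnishes an equivalence on the monodromic subcategories rather than on the full categories.
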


\quash{
\begin{remark}
We have an analogous (unipotent-)monodromic vesion of theorem \ref{formula for L}. See appendix \ref{Monodromic sheaves}.
\end{remark}}
\begin{proof}(of theorem \ref{formula for L})

The proof is the same as of theorem \ref{Thm:FormulaMatsuki};
we first notice that for
$\mM \in D\mathcal{CS}(X/K)$ and
$\mG \in D_{M\bar N\times K}(G)_{um}$
we have:
\beqn
 \Hom_{D_{M\bar N}(G)} (\Av^{K}_{M\bar N} \mM , \mG)\is\Hom_{D_{M}(G)} ( \mM , \niceG) \is\Hom_{D_{M}(G)}(L_{\gamma} \mM, L_{\gamma} \niceG) \cong
\eeqn
\beqn
 \cong \Hom_{D_{M}(G)} (L_{\gamma} \mM, \niceG)  \cong Hom_{D_{M\bar N}(G)} (\Av^{MN}_{M\bar N} L_{\gamma} \niceF , \niceG ).
\eeqn
Here, the first equality is by adjunction
the fact $\Av^{K}_{M\bar N} \mM$ is $A$-monodromic (see lemma \ref{A mon}),
the second is by proposition \ref{Key AM},
the third is by lemma \ref{Remark_LimitOfWM} and the fourth is by adjunction and
the fact $L_{\gamma}(\mM)$, hence $\Av^{MN}_{M\bar N}\circ L_{\gamma}(\mM)$, is $A$-monodromic (see lemma \ref{A mon}). By Yoneda lemma, we get an isomorphism:

\beq\label{third arrow}
\Av^{MN}_{M\bar N} \circ L_{\gamma}\is\Av^{K}_{M\bar N}\text{\quad (as functors $D\mathcal{CS}(X/K) \to D_{M\bar{N}\times K}(G)_{um}$).}
\eeq

Now we compose both sides of (\ref{third arrow}) with
$av^{M\bar N}_{MN}$:

$$\av^{M\bar N}_{MN} \circ\Av^{MN}_{M\bar N} \circ L_{\gamma} \cong\av^{M\bar N}_{MN} \circ \Av^K_{M\bar N} \text{\quad (as functors $D\mathcal{CS}(X/K) \to D_{MN\times K}(G)_{um}$).}$$

Notice that on the left hand side, we apply $\av^{M\bar N}_{MN} \circ\Av^{M\bar N}_{MN}$ to objects in $D_{MN\times K}(G)_{um}$. Hence, by lemma \ref{long intertwining}, this application has no effect, and we get:

$$ L_{\gamma} \cong\av^{M\bar N}_{MN} \circ\Av^K_{M\bar N} \text{\quad (as functors $D\mathcal{CS}(X/K) \to D_{MN\times K}(G)_{um}$).}$$

The proof is completed.

\end{proof}

\quash{
\begin{remark}[Relation to \cite{BFO}]

In the group case $(G = H \times H , K = \Delta H)$, by proposition \ref{Vinberg vs family},
the limit functor $L_\gamma$ is isomorphic to
the limit functor $L_{f_\gamma}$ for the family $f_{\gamma}:H_{enh,\gamma}^0\ra\bA^1$
coming from the Vinberg's semigroup.
On the other hand, one can show that the composition of
averaging functors $\av_{MN}^{M\bar N}\circ\Av_{\bar MN}^K:D(H)\ra D((H/N_H\times H/\bar H_N)/T_H)$ in
theorem \ref{formula for L}
is isomorphic to the functor $I_{w_0}\circ\widehat{HC}$ considered in \cite [\S 3.2]{BFO}, where
$\widehat{HC}$ is  Harish-Chandra functor and $I_{w_0}$ is the \emph{intertwining functor}. Thus in this setting, we obtain an isomorphism of functors
\[L_{f_\gamma}\is I_{w_0}\circ\widehat{HC},\]
which was first proved in \cite [\S 6]{BFO}
by a different method.\footnote{In fact, in \cite{BFO}, instead of using Vinberg's semigroup the authors used the wonderful compatifcation
$\bar H_{adj}$ of
$H_{adj}:=H/Z_H$ and considered the Verdier specialization functor. However, one can check that when $H$ is adjoint,
the family $\bar H_{enh,\gamma}^0\ra\bA^1$ is isomorphic to the family
of deformation to the normal cone of the closed stratum in $\bar H_{adj}$, hence
the limit functor $L_\gamma$ used here is isomorphic to the
Verdier specialization functor considered in \cite{BFO}.
 }

\end{remark}}

\quash{
Let $T$ be a $\theta$-stable maximal torus. Let $T^{-\theta}=\{t\in T|\theta(t=t^{-1})\}$.
$T$ is called maximal split is $dim(T^{-\theta})$ is maximal among all
$\theta$-stable torus. We dente by $A=T^{-\theta,0}$ the identity component of
$T^{-\theta}$. Let $r=dim(A)$, and we call it the split rank of the symmetric pair
$(G,K)$. A symmetric pair $(G,K)$ is called split if $A$ is a maximal torus.
We denote by $\Phi$ and $\Phi_a$ the root systems correspond to  $(G,T)$ and $(G,A)$
and $\rW$ and $\rW_a$  Weyl groups associated to those root systems.
}

\quash{

Let $T$ be a maximal split torus. The involution $\theta$
defines a involution on the root system $\Phi$ of $(G,T)$.
We say that a root $\alpha\in\Phi$ is imaginary if
$\theta(\alpha)=\alpha$.
It is shown in \cite{DP} that there exist a positive system
$\Phi^+$ such if $\alpha\in\Phi^+$ and $\theta(\alpha)\in\Phi^+$,
then $\alpha$ is imaginary. Let $\mathcal S=\{\beta_1,...,\beta_s\}$
to be the set of simple imaginary roots for $\Phi^+$ and
let $[\mathcal S]=\{\beta|\beta=\sum n_i\beta_i\}$.
It is know that $[\mathcal S]$ is the set of imaginary roots.

Let $\fl=\ft\oplus_{\beta\in [\mathcal S]}\fg_{\alpha}=Z_{\fg}(\fa)$.
Then $\fl$ is a Levi sub-algebra of $\fg$. Let $\fm=Z_{\fk}(\fa)$. We
have $\fl=\fm\oplus\fa$.
Let $\fu$ be the
nilradical corresponds to $\Phi^+$. Let
$\fp=\fl+\fu\supset\fb=\ft+\fu$ and $\fn$ be the nilradical of $\fp$.
Let $\fu_{\theta}$ and $\fn_{\theta}$ be the opposite nilradicals.
Set $\fp_{\theta}=\fl+\fn_{\theta}$
and $\fb_{\theta}=\ft+\fu_{\theta}$.
Let $L$, $M$, $U$, $N$, $P$, $B$, $U_{\theta}$, $N_{\theta}$, $P_{\theta}$, $B_{\theta}$
be the corresponding connected groups.

Let $\{\alpha_1,...,\alpha_t\}$ be the set of
nonimaginary simple roots. It is shown in \cite{DP} that
$\theta$ defines an involution on the indexing set of
nonimaginary roots $\{1,...,t\}$ with $r$ orbits. Renumber the
$\{\alpha_1,...,\alpha_t\}$ such that each orbit of $\sigma$ on
$\{1,...,t\}$ has one element in $\{1,...,r\}$. For non-imaginary
root $\alpha$ we set $\bar\alpha=\alpha|_A$. We call
$\bar\alpha_1,...,\bar\alpha_r$ the simple restricted roots.
We used the direct sum decomposition $\ft=\ft^{\theta}+\fa$ to
identify $\fa^*\is\{f\in\ft^*|f(\ft^{\theta})=0\}$.

We set $\mX=G/MN$, $\mX_{\theta}=G/M\theta(N)$, $\mB=G/B$, $\mtB=G/U$,
$\mB_{\theta}=G/B_{\theta}$, $\mtB_{\theta}=G/U_{\theta}$,
$\mP=G/P$, $\mP_{\theta}=
G/P_{\theta}$, $\widetilde\mP=G/N$, $\widetilde\mP_{\theta}=G/N_{\theta}$.
We denote by $h:\mX\ra\mP$ and $h_{\theta}:\mX_{\theta}\ra\mP_{\theta}$
to be the natural projections.
}


\quash{
Let $\gamma_1:\on{Perv}_K(K\backslash G)\is\on{Perv}_K(X)$,
$\gamma_2:\on{Perv}_K(MN\backslash G)\is\on{Perv}_{MN}(X)$ be the canonical equivalence.
\begin{prop}
There is a canonical isomorphism of functors
\[\gamma_2\circ\Phi_X\is\Phi_f\circ\gamma_1:\on{Perv}_K(X)\ra\on{Perv}_K(MN\backslash G).\]
\end{prop}

\begin{remark}\label{DP}
When $G$ is adjoint, one can identify the family
$\widetilde X\ra\mathbb A^1$
with the family coming from the
deformation to the normal cone of $G$ inside the Deconcici-Procesi compactification
$\bar G$ of $G$ [cf. DP, AM2]. Then above proposition follows from theorem 5.6 in [AM2].
\end{remark}
}

\quash{
\subsection{Deconcici-Procesi compactification and Specialization}
In this subsection we briefly explain the relation with the work
in [BFO].

Assume $G$ is adjoint.
De Concici and Procesi in [DP] introduced a compactification $\overline{X}$ of $X=G/K$.
This compactification satisfies the following properties.
\begin{enumerate}
\item
$\overline X$ is smooth with finite many $G$-orbits.
\item
The complement $S\subset\overline X$ is a union
$S=\cup_{i\in\Sigma}S_i$ of smooth divisors with normal
crossing where $\Sigma$ is the set of simple restricted roots of $G$.
\item
The unique closed orbit $Z=\cap S_i$ is isomorphic to $G/P$
\item The normal bundle of $Z$ to the open orbit $X$, i.e.
$N_{Z}(\overline X)\backslash N_{Z}(S)$
is isomorphic to $G/MN$.

\end{enumerate}
We have the Verdier specialization functor $Sp:D(\overline X)\ra D(N_{Z}(\overline X))$.
It induces a functor $Sp^0:D(X)\ra D(G/MN)$.
Using the diagram constructed in [AM, theorem 5.6] (replacing $G/B$ by $G/K$),
we get the following lemma.
\begin{lemma}\label{Sp}
There is an isomorphism of functors
$Sp^0\is\gamma\circ\Phi_X:D_{K}(X)\ra D_{K}(G/MN)$
where $\gamma:D_{MN}(X)\is D_{K}(G/MN)$ is the canonical equivalence.
\end{lemma}

Now assume we are in the group case , i.e., $G=H\times H$ where $H$ is an adjoint group and $\theta(x,y)=(y,x)$.
In this case we have $K\is H$,
$X=G/K\is H$, $B=B_H\times B_H$, $P=B_H\times\bar B_H$, $N=N_H\times \bar N_H$.

\quash{
 The correspondence in
\S\ref{character sheaf} can be identified with
\[H\stackrel{q}\la H\times\mB_H\stackrel{p}\ra \mB_H\times\mB_H\]
where $\mB_H=H/H_B$, $q$ is the projection and $p(h,xB_H)=(hxB,xB)$.}

Recall that in [BFO, \S 3.2], two functors $\widehat{HC}:D(H)\ra D((H/N_H\times H/N_H)/T_H)$, $
I_{w_0}:D((H/N_H\times H/N_H)/T_H)\ra D((H/N_H\times H/\bar N_H)/T_H)$ are introduced.
Here $\widehat{HC}$ is essentially the functor $HC$ defined in \S\ref{character sheaf} and $I_{w_0}$ is
the long intertwining functor.
Using the canonical isomorphism $\gamma:D_{NM}(X)\is D_H((H/N_H\times H/\bar N_H)/T_H)$
and lemma \ref{}
one can show that for any character sheaf $\mM\in P_K(X)\is P_{H,\on{Ad}}(H)$,
there is an canonical isomorphism
\[\gamma\circ Av_{N!}\circ Av_{\bar N*}(\mM)\is I_{w_0}\circ\widehat{HC}(\mM).\]
Combining above isomorpshim, corollary \ref{formula for L} and lemma \ref{Sp}, we get the following isomorphism
\[Sp^0(\mM)\is I_{w_0}\circ\widehat{HC}(\mM)\]
which was first proved in [BFO, \S 6].
However, unlike the approach here which is purely topological,
the proof in \emph{loc. cit.} is algebraic, i.e., they first showed that as $\fg$-modules
$\Gamma^{\hat\lambda}(Sp^0(\mM))\is \Gamma^{\hat\lambda}(I_{w_0}\circ\widehat{HC}(\mM))$
(here $\lambda$ is a dominate weight) and
then used localization theorem to deduce the desired isomorphism.

}

\quash{
Let $j:X\ra\overline X$ be the embedding. The complement $Z=\overline X/X=\cup S_i$
is a divisor with normal crossing. The closed stratum

In this section we briefly review the construction following \cite{EJ} and
\cite{DP}.

Let $G_s$ be the simply connected cover of $G$ and let
$K_s$ be the preimage of $K$ in $G_s$. Let $V$ be an irreducible
representation of $G_s$ with highest weight vector $v_0$ of
weight $\lambda$. The induced $G_s$-action on $\mathbb P(V)$
factors to give a $G$-action on $\mathbb P(V)$ and similarly, we
obtain a $G\times G$ action on $\mathbb P(\rm End(V))$.
We choose $\lambda$ so that $G_{v_o}=P$. Equivalently,
$\lambda(\alpha_i)>0$ for every simple nonimaginary root
$\alpha_i$, and $\lambda(\beta_i)=0$ for simple imaginary root
$\beta_i$. Such a weight is called regular special.
Consider the twisted action $G$ on
$\mathbb P(\rm End(V))$ given by $g[C]=g[C]\sigma(g^{-1})$.
One can show that the stabilizer of $\rm id_V$ under this twisted
action is $K$, hence we obtain an imbedding
$X=G/K\hookrightarrow\mathbb P(\rm End(V))$ and
we define the wonderful compactification $\overline X$ of $X$ as
the closure of $G$ in $\mathbb P(\rm End(V))$.

The following result is proved in \cite{DP}.
\begin{thm}\label{DP}
1) $\overline X$ is smooth with finite many $G$-orbits.
\\
2) The complement $S\subset\overline X$ is a union
$S=\cup_{i\in\Sigma}S_i$ of smooth divisors with normal
crossing where $\Sigma$ is the set of simple restricted roots of $G$.
\\
3) The wonderful compactification $\overline X$ does not
depend on a choice an irreducible representation $V_{\lambda}$
with regular special weight.
\\
4)The $G$-orbits of $\overline X$ correspond to subsets
$I\subset\Sigma$, so that the orbit closures are the intersections
$$S_I=\cap_{i\in I} S_i$$
and the orbits are
$$S_I^0=S_I-\cup_{k\in\Sigma-I}S_{I\cup k}$$
\\
5)The unique closed orbit $S_{\Sigma}$ is isomorphic to $G/P$ and
the normal bundle $N_{\mP}(\overline X)$
of $\mP=G/P$ in $\overline X$ is the vector bundle
on $\mP=G/P$ associated with the representation
$$\rho_P:P\ra\rm Aut(\bC^r),x\ra \{\bar\alpha_i(pr_A(x))\}_{i=1,...,r}$$
where $pr_A:P=MAN\ra A$ is the natural projection map.
Equivalently, we have
$$N_{\mP}(\overline X)=(G/MN)\times^{A,\rho_P}\bC^{r}$$
\\
6)The normal bundle of $\mP$ to the open orbit $X$, i.e.
$N_{\mP}(\overline X)\backslash N_{\mP}(S)$
is isomorphic to $\mX=G/MN$.
\end{thm}

\begin{example}\label{group}
Let $H$ be a reductive group.  Considering $G=H\times H$
with the involution $\theta:G\ra G$ given by $\theta(g_1,g_2)=(g_2,g_1)$.
In this case, $K\is H_{\Delta}$ and $X=H\times H/H_{\Delta}\is H$.
We have $A=T_{H,\Delta}$ ,
here $T_{H,\Delta}$ is the maximal torus on $H$.
We call this case the group case.

\end{example}

\begin{example}
Let $G=PGL_2(\bC)$. Considering the involution $\theta$
given by conjugation with the diagonal matrix $A$,
whose entries are $a_{11}=1$,
$a_{22}=-1$.
In this case, the fixed subgroup $K$ is a normalizer of a torus.
\end{example}

}

\appendix


\quash{
\section{Proofs of \S\ref{Sec:LimitFunctor}}

\begin{proof} (of lemma \ref{P and M})

Part 1), 2) and 3)  follow immediately form the property of nearby cycle (see Appendix \ref{NotationNearby}). We give a proof of 4).
Consider the following diagram:
\[\xymatrix{&\bA^1\times X\ar[d]^\pi&&
\\X&\bG_m\times X\ar[l]_\alpha\ar[r]^j&\bA^1\times X&X\ar[l]_{\ \ \ \ i}}\]
where $\pi$ is given by $\pi(t,x)=(exp(2\pi it),x)$.
Since $\mF\in D(X)$ $\bG_m$-monodromic, we have
$\pi^*\alpha^0\mF\is \bC_{\bA^1}[1]\boxtimes\mF$, hence
\[L_{X,\alpha}(\mF)=\psi(\alpha^0\mF)=i^*j_*\pi_*\pi^*(\alpha^0\mF)[-1]\is
i^*j_*\pi_*(\bC_{\bA^1}\boxtimes\mF)\is\mF.\]

\end{proof}
}



\quash{
\section{Properties of nearby cycles functors}\label{NotationNearby}

In this appendix we collect some standard properties of nearby cycles functors. The main references for this section
are \cite{BB2},\cite{D} and \cite{S}.

For a morphism $f: X \to \mathbb A^1$, we denote $X_t := f^{-1}(t)$ and $X^{\circ} := f^{-1}(\bG_m)$. We denote by $\psi_f : D(X^{\circ}) \to D(X_0)$ the nearby cycles functor.

In what follows, let $f : X \to \bA^1$ be a morphism, $\phi: Y \ra X$ a morphism, and denote $g=f\circ\phi:Y\ra \bA^1$.

\begin{thm}\label{NearbyCyclesProperties} \
\begin{enumerate}
\item The functor $\psi_f[-1]$ commutes with Verdier duality $\bD$. I.e., there is an isomorphism $\psi_f[-1] \circ \bD \cong \bD \circ \psi_f[-1]$.

\item The functor $\psi_f[-1]$ is $t$-exact w.r.t. the perverse $t$-structure.

\item There is a canonical morphism $\phi^* \circ \psi_f \to \psi_g \circ \phi^*$. Moreover, if
$\phi$ is smooth then this morphism is an isomorphism.

\item There is a canonical morphism $\psi_f \circ \phi_* \to  \phi_* \circ \psi_g$. Moreover, if
$\phi$ is proper then this morphism is an isomorphism.

\item (Kunneth formula) Let $f_1:X_1\ra \bA^1$, $f_2:X_2\ra \bA^1$ be two $\bA^1$-schemes.
Let $f:X=X_1\times_{\bA^1}X_2 \ra \bA^1$ be the fiber product.
Then for any
$\mM \in D(X_1)$, $\mN \in D(X_2)$,
there is a canonical isomorphism
$\psi_{f_1}(\mM) \boxtimes \psi_{f_2}(\mN) \is \psi_f(\mM\boxtimes_{\bA^1} \mN).$

\item(Constructibility)
Assume that $f$ is smooth. Let $S$ be a Whitney stratification of $X$. Assume that
the special fiber $X_0$ (and so also $X^{\circ}$) is a union of strata.
Let $S_0$ (resp. $S^{\circ}$) be the stratification that $S$ induces on $X_0$ (resp. $X^{\circ}$).
Then for any $\mM \in D_{S^{\circ}}(X^{\circ})$ we have
$\psi_f(\mM)\in D_{S_0}(X_0)$.

\item(Transversal pull-back) Assume that $\phi : Y \ra X$ is a closed embedding, and that $f$ and $Y$ are smooth.
Let $S$ be a Whitney stratification of $X$ as in (6). Assume that
$Y$ is transversal to $S$.
Then for any $\mM \in D_{S^{\circ}}(X^{\circ})$, the canonical morphism $\phi^* \circ \psi_f(\mM) \to \psi_g \circ \phi^*(\mM)$ is an isomorphism.
\end{enumerate}
\end {thm}

\begin{proof}
Parts (1), (2), (3), (4) and (5) are proved in \cite[\S 4]{D} and \cite[\S 5]{BB2}.
Parts (6) and (7) are proved in \cite[lemma 4.2.1 and lemma 4.3.4]{S}.

\quash{
We now prove (6).  Recall
that the canonical morphism in (3) is given by
\[\phi^*\circ\psi_f=i^*\phi^*j_*c_*c^*\ra i^*j_*c_*c^*\phi^*=\psi_g\circ\phi^*,\]
where the map in the middle is
induced by the canonical adjunction map $\iota:\phi^*j_*c_*c^*\ra j_*c_*\phi^*c^*=
j_*c_*c^*\phi^*$.
Now the assumption in (6) implies
the characteristic varietie of
$j_*c_*c^*\mM$ (resp. $\mM$)
is contained in $T^*_SX$ (resp. $T^*_{S^0}X_\eta$).
Since $Y$ is transversal to $S$, we have
$\phi^*\is\phi^![2\on{codim}Y]$ on both $j_*c_*c^*\mM$, $\mM$
(see [D, corollary 4.3.7]), and it implies $\iota$ is an isomorphism.
Result follows.
}
\end{proof}

\begin{thm}
\
\begin{enumerate}

\item The following diagram commutes
\[ \xymatrix{\psi_f\ar[r] \ar[d] & \psi_f \phi_*\phi^* \ar[d] \\
\phi_*\phi^* \psi_f \ar[r] & \phi_* \psi_g \phi^* }\]

\item The following diagram commutes
\[ \xymatrix{\psi_g \ar[r] \ar[d] & \psi_g \phi^! \phi_! \ar[d] \\
\phi^!\phi_! \psi_g \ar[r] & \phi^! \psi_f \phi_! }\]
\end{enumerate}

\end{thm}

\begin{proof}

We sketch a proof of 1). The proof of 2) is similar.
Consider the diagram
\beq\label{diagram for nc}
\xymatrix{Y_0\ar[d]^{\phi}\ar[r]^{i_0}&Y\ar[d]^\phi&Y^{\circ}\ar[l]_j\ar[d]^{\phi}\\
X_0\ar[d]^f\ar[r]^{i_0}&X\ar[d]^f&X^{\circ}\ar[l]_j\ar[d]^f\\
\{0\}\ar[r]&\bA^1&\bG_m\ar[l]}.
\eeq
We need to show that $\iota:i_0^*j_*c_*c^*\ra i_0^*j_*c_*c^*\phi_*\phi^*\ra \phi_*i_0^*j_*c_*c^*\phi^*$
is equal to $\iota':i_0^*j_*c_*c^*\ra \phi_*\phi^*i_0^*j_*c_*c^*\ra \phi_*i_0^*j_*c_*c^*\phi^*$.
Here the last arrows in both maps are given by moving $\phi^*$ to the right using the
canonical adjunction maps.
Observe that we have the commutative diagrams
\beq\label{diag 1}
 \xymatrix{c_*c^*\ar[r] \ar[d] & c_*c^*\phi_*\phi^* \ar[d] \\
\phi_*\phi^*c_*c^* \ar[r] & c_*\phi_*c^*\phi^*=\phi_*c_*c^*\phi^* }
\eeq
and
\beq\label{diag 2}
\xymatrix{i_0^*j_*\ar[r] \ar[d] & i_0^*j_*\phi_*\phi^* \ar[d] \\
\phi_*\phi^*i_0^*j_* \ar[r] & \phi_*i_0^*j_*\phi^* }.
\eeq
Consider the diagram
\[ \xymatrix{i_0^*j_*c_*c^*\phi_*\phi^*\ar[rr]^{a_2}&&i_0^*j_*c_*\phi_*c^*\phi^* \ar[d]^{a_3} \\
i_0^*j_*c_*c^* \ar[r]\ar[u]_{a_1}\ar[d]^{b_1} & i_0^*j_*\phi_*\phi^*c_*c^*\ar[r]\ar[d]&i_0^*j_*\phi_*c_*c^*\phi^*\ar[d]^{a_4}\\
\phi_*\phi^*i_0^*j_*c_*c^*\ar[r]^{b_2}&\phi_*i_0^*j_*\phi^*c_*c^*\ar[r]^{b_3}&\phi_*i_0^*j_*c_*c^*\phi^* }.\]
We claim that it is commutative. Indeed,
the commutativity of lower right diagram is clear,
(\ref{diag 1}) implies that the upper diagram
is commutative, while
(\ref{diag 2}) implies that the lower left diagram
is commutative.
Now since we have $\iota=a_4a_3a_2a_1$, $\iota'=b_3b_2b_1$, the lemma follows.

\end{proof}

}


\section{Quasi-Affineness of $\widetilde{G} / \widetilde{K}$}\label{App:QuasiAff}

In this appendix, we follow very closely \cite[appendix C]{DG}.

\subsection{Statement}

\subsubsection{}

Fix a connected reductive group $G$ over an algebraically closed field $k$, and an involution $\theta:G \ra G$. Write as usual $G^{\theta}=\{g\in G \ | \ \theta(g)=g\}$. Fix an open subgroup $K$ of $G^{\theta}$.

A torus $S \subset G$ is called $\theta$-\emph{split} if $\theta(s)=s^{-1}$ for all $s\in S$.
A parabolic subgroup $P \subset G$ is called $\theta$-\emph{split} if $L := P \cap \theta(P)$ is a Levi subgroup of $P$ (and of $\theta(P)$). Taking $A$ to be the maximal $\theta$-split torus in $Z(L)$, one has $L = Z_G (A)$.

Fix a minimal $\theta$-split parabolic $P \subset G$, and thus the corresponding $L := P \cap \theta(P)$ and $A$ - the maximal $\theta$-split torus in $Z(L)$. Note that $A$ is a maximal $\theta$-split torus in $G$, since $P$ is minimal.

Denote $M := L \cap K = Z_K(A)$, and denote $N := R_u (P)$. One has the decomposition $L = MA$, and thus the Langlands decomposition $P=MAN$.

Fix also a co-character $\gamma : \bG_m \to A$, which we suppose to have negative pairing with roots of $N$.

Additionally, choose a maximal torus $T$ of $L$, and a Borel subgroup $T \subset B \subset P$ (so that we can talk about highest weights etc.). One has $A \subset T$.

\subsubsection{}

Consider the constant group scheme $\widetilde{G} := \bA^1 \times G \to \bA^1$. Consider the following closed subscheme of $G^{\circ} = \bG_m \times G$: $$K^{\circ} = \{ (t,g) \ | \ g \in \gamma(t) K \gamma(t)^{-1} \},$$ and let $\widetilde{K}$ be the closure of $K^{\circ}$ in $\widetilde{G}$.

By \cite[proposition 2.3.8]{DG} or \cite{AM1}, $\widetilde{K} \ra \bA^1$ is a smooth group scheme over $\bA^1$. Denoting by $K_t$ its fiber over $t \in \bA^1(k)$, one has $K_t = \gamma(t) K \gamma(t)^{-1}$ for $t \neq 0$, and $K_0 = MN$.

The following result is the goal of this appendix.

\begin{claim}\label{cl:appc}

The quotient space $ \widetilde{K} \backslash \widetilde{G} \to \bA^1$ is a quasi-affine scheme.

\end{claim}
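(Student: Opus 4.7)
The plan is to embed $\widetilde{K}\backslash\widetilde{G}$ as a locally closed subscheme of an affine scheme over $\bA^1$, following closely the strategy of \cite[Appendix C]{DG} adapted to the symmetric pair setting. Since $\widetilde{G}$ is affine, it suffices to exhibit finitely many $G$-representations $V_1,\dots,V_r$ together with regular families of vectors $\widetilde{v}_i \colon \bA^1 \to V_i$ such that $\widetilde{v}_i(t) \in V_i^{\widetilde{K}_t}$ for every $t \in \bA^1$, and such that the combined morphism
$$ \widetilde{G} \longrightarrow \bA^1 \times \prod_{i=1}^r V_i, \qquad (t,g) \longmapsto \bigl(t,\, g^{-1}\widetilde{v}_1(t),\dots,g^{-1}\widetilde{v}_r(t)\bigr) $$
descends to a locally closed immersion on the quotient $\widetilde{K}\backslash\widetilde{G}$.

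To construct the sections, I would fix a dominant weight $\lambda$ for which $V_\lambda$ admits a nonzero $K$-fixed vector $v_\lambda$ (a ``$K$-spherical'' weight). Decomposing $v_\lambda = \sum_\mu v_\lambda^\mu$ according to the $A$-weight grading of $V_\lambda$ and setting $n_\lambda := \min\{\langle \gamma,\mu\rangle \mid v_\lambda^\mu \neq 0\}$, I set
$$ \widetilde{v}_\lambda(t) := t^{-n_\lambda}\,\gamma(t)\cdot v_\lambda = \sum_\mu t^{\langle\gamma,\mu\rangle - n_\lambda}\, v_\lambda^\mu, $$
which extends to a regular morphism $\bA^1 \to V_\lambda$. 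Over $\bG_m$, $\widetilde{v}_\lambda(t)$ is $\widetilde{K}_t$-fixed since $\gamma(t)v_\lambda$ is $\gamma(t)K\gamma(t)^{-1}$-fixed; the nonzero vector $\widetilde{v}_\lambda(0)$ is the sum of the components of $v_\lambda$ of minimal $\gamma$-weight, and it is automatically $\widetilde{K}_0 = MN$-fixed, because the closure defining $\widetilde{K}$ in $\widetilde{G}$ must preserve the closure defining the section $\widetilde{v}_\lambda$ in the total space $\bA^1 \times V_\lambda$.

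It then remains to check that, for a sufficiently rich collection of $K$-spherical weights, the induced morphism $\phi : \widetilde{K}\backslash\widetilde{G} \hookrightarrow \bA^1 \times \prod_\lambda V_\lambda$ is a locally closed immersion. Since the map is $\bG_m$-equivariant and $\widetilde{K}\backslash\widetilde{G} \to \bA^1$ is flat, it suffices to verify this on the two fibers over $t=1$ and $t=0$. At $t=1$ the map is the standard embedding of $K\backslash G$ into $\prod V_\lambda$ via spherical vectors, which is a locally closed immersion by the classical quasi-affineness of the affine symmetric space $K\backslash G$. At $t=0$ we need the vectors $\widetilde{v}_\lambda(0)$ to jointly embed $MN\backslash G$ as a quasi-affine scheme: this should hold because, for $\lambda$ sufficiently regular, the minimum in the definition of $n_\lambda$ is attained on the $P$-highest weight line of $V_\lambda$, so $\widetilde{v}_\lambda(0)$ is (proportional to) the $P$-highest weight vector and the induced map is the standard embedding of the base affine space of $P$.

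The main obstacle is the last identification: showing that $\widetilde{v}_\lambda(0)$ lies on the $P$-highest weight line for regular $K$-spherical $\lambda$, which amounts to controlling the $A$-weight decomposition of the spherical vector $v_\lambda$. This is the direct analogue of the computation carried out in the group case in \cite[Appendix C]{DG}, and it relies on standard input from the theory of spherical representations of symmetric pairs, in particular on the description of the $K$-spherical sub-monoid of the weight lattice via the restricted root datum. Granting this computation, the fiberwise immersion statements combined with flatness promote $\phi$ to a global locally closed immersion, and quasi-affineness of $\widetilde{K}\backslash\widetilde{G}$ follows.
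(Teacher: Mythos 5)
Your overall strategy is the same in spirit as the paper's (and as \cite[Appendix C]{DG}): map $\widetilde{K}\backslash\widetilde{G}$ equivariantly to an affine $\bA^1$-scheme by taking $\gamma$-rescaled limits of $K$-invariant vectors, the rescaled section being $\widetilde{K}$-invariant by exactly the closure argument you give. But two steps do not hold as written. First, the description of the fiber at $t=0$ is too optimistic. Even granting the deferred fact that the spherical vector $v_\lambda$ has nonzero component on the highest weight line (true, but a genuine input: it follows from the density of $\bar P K$ in $G$ and you do not prove it), the joint stabilizer of the limits $\widetilde v_{\lambda_i}(0)$ is $\ker\bigl((\lambda_1,\dots,\lambda_r)\colon P\to\bG_m^r\bigr)$, which contains $MN$ only up to \emph{finite index}: the restricted weights $\bar\lambda_i$ of $K$-spherical representations generate in general only a finite-index sublattice of $X^*(A)$, so the special-fiber map is quasi-finite onto its image but need not be injective, hence need not be a locally closed immersion. (A similar finite-index discrepancy appears at $t\neq 0$ when $K\subsetneq G^\theta$.) Second, the promotion ``fiberwise immersion plus flatness implies immersion'' is asserted without justification; what can be checked fiberwise is that the map is a monomorphism (radicial and unramified), after which one still needs Zariski's main theorem (a separated quasi-finite monomorphism is an immersion) to conclude --- but this is moot given the first point.

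Both difficulties are precisely what the paper's argument is built to avoid. The paper first reduces to $K=G^\theta$ (the map $\widetilde K\backslash\widetilde G\to\widetilde{G^\theta}\backslash\widetilde G$ being finite) and then, following \cite[C.1.1--C.1.2]{DG}, only requires a \emph{quasi-finite} map to an affine $\bA^1$-scheme, i.e.\ a $G$-point of an affine $G$-variety over $\bA^1$ whose stabilizer contains $K_t(k)$ with finite index for every $t$; quasi-affineness then comes from Zariski's main theorem, not from an immersion. Moreover, instead of spherical vectors in $V_\lambda$ it uses the point $t^{2n}\rho(\gamma(t^{-2}))\in\End(V)$ for the twisted action $T\cdot g=\rho(\theta(g))^{-1}T\rho(g)$, whose limit at $t=0$ is an explicit projection operator. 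This works for an arbitrary representation $V$ --- no sphericity, no Cartan--Helgason analysis, no nonvanishing of extreme weight components --- and the three requirements (exact stabilizer for $t\neq 0$, containment of the stabilizer at $t=0$ in $P$, finiteness of its $A$-part) are met by a faithful $V$, an irreducible $V$ with regular lowest weight, and finitely many $V_\lambda$ with anti-dominant $\lambda$ of finite joint kernel in $T$. If you replace ``locally closed immersion'' by ``quasi-finite map'' throughout, add the ZMT reduction, and supply the nonvanishing of the top $A$-weight component of $v_\lambda$, your spherical-vector construction can be salvaged, but it ends up needing strictly more representation-theoretic input than the paper's.
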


\subsection{Reduction of claim \ref{cl:appc} to claim \ref{ClaimExistsGSpace}}

First of all, as $\widetilde{K} \backslash \widetilde{G} \to \widetilde{G^{\theta}} \backslash \widetilde{G}$ is finite, we can (and will) assume that $K = G^{\theta}$.

As is explained in \cite[$C.1.1$ and $C.1.2$]{DG}, claim \ref{cl:appc} follows from the following claim:

\begin{claim}

$\widetilde{K} \backslash \widetilde{G} \to \bA^1$ admits a quasi-finite map to an affine scheme $Z \to \bA^1$.

\end{claim}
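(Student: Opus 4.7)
The plan is to adapt the argument of \cite[Appendix C]{DG} from the group case to the symmetric case. I would take $Z := \bA^1 \times \bigoplus_i V_i$ for a finite family of $G$-representations $V_i$ equipped with $K$-fixed vectors $v_i \in V_i^K$, to be selected below, and construct the desired morphism $\widetilde K \backslash \widetilde G \to Z$ as a product over $i$ of morphisms built from each pair $(V_i, v_i)$.

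For a fixed pair $(V, v)$ with $v \in V^K$, decompose $v = \sum_\mu v_\mu$ under the $A$-weight decomposition of $V$, and set $N := -\min\{\langle \gamma, \mu \rangle : v_\mu \neq 0\}$. The formula $(t, g) \mapsto t^N g^{-1} \gamma(t) v = \sum_\mu t^{N + \langle \gamma, \mu \rangle} g^{-1} v_\mu$ then defines a morphism $\widetilde G = \bA^1 \times G \to V$ over $\bA^1$, because all exponents of $t$ appearing are non-negative. It is left-$\widetilde K$-invariant: for $k = \gamma(t) k_0 \gamma(t)^{-1} \in K_t$, one has $(kg)^{-1} t^N \gamma(t) v = g^{-1} t^N \gamma(t) k_0^{-1} v = g^{-1} t^N \gamma(t) v$ by $K$-invariance of $v$. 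Hence it descends to $\phi_V : \widetilde K \backslash \widetilde G \to V$ over $\bA^1$. The fiber of $\phi_V$ over $t = 0$ is the map $MNg \mapsto g^{-1} v_0$, where $v_0 := \sum_{\langle\gamma,\mu\rangle = -N} v_\mu$ is the lowest $\gamma$-weight component of $v$; the containment $MN \subset \on{Stab}_G(v_0)$ follows by taking the closure in $G \times \bA^1$ of the relation $K_t \subset \on{Stab}_G(t^N \gamma(t) v)$ (which holds on $\bG_m$), and using that the closure of $K^{\circ}$ in $\widetilde G$ is $\widetilde K$, whose fiber at $0$ is $MN$.

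Combining these into $\phi := \prod_i \phi_{V_i} : \widetilde K \backslash \widetilde G \to Z$, the required quasi-finiteness is checked fiber-by-fiber over $\bA^1$. Over $t \neq 0$, translation by $\gamma(t)^{-1}$ identifies the fiber with $K \backslash G \to \bigoplus_i V_i$, $Kg \mapsto (g^{-1} v_i)_i$. Since $K$ is reductive (as an open subgroup of the fixed-point group of an involution on a reductive group), Matsushima's theorem makes $K \backslash G$ affine, and the Peter--Weyl-type decomposition $\mathcal{O}(K \backslash G) = \bigoplus_\lambda V_\lambda^K \otimes V_\lambda^*$ exhibits a $G$-equivariant closed immersion $K \backslash G \hookrightarrow \bigoplus_\lambda V_\lambda$. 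Choosing the $(V_i, v_i)$ to realize such an embedding yields the required quasi-finiteness over every $t \neq 0$. Over $t = 0$, the fiber is $MN \backslash G \to \bigoplus_i V_i$, $MNg \mapsto (g^{-1} v_{i,0})_i$, and quasi-finiteness reduces to $\bigcap_i \on{Stab}_G(v_{i, 0}) = MN$ up to a finite subgroup, since the reverse inclusion is already established.

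The main obstacle is this final step. I would handle it by enlarging the family: starting from the $(V_i, v_i)$ chosen for $t \neq 0$, one adjoins additional pairs so that the collection of lowest $\gamma$-weight components $v_{i,0}$ produces enough $MN$-fixed vectors in $G$-representations to separate $MN$-orbits of $G$ generically, forcing $\bigcap_i \on{Stab}_G(v_{i,0})$ to coincide with $MN$ up to a finite group. This is the symmetric-space analogue of the use of fundamental representations in \cite[Appendix C]{DG}, and their detailed combinatorial construction should transfer to our setting once the above framework is in place.
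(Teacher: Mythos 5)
Your framework is the right one and is in fact the same as the paper's: the paper reduces the claim to finding an affine $G$-scheme $Y$ with a map $f:\bA^1\to Y$ such that $\on{Stab}_{G(k)}(f(t))$ contains $K_t(k)$ with finite index, and its construction $Y=\End(V)$ with the twisted action $T\cdot g=\rho(\theta(g)^{-1})T\rho(g)$ and $f(t)=t^{2n}\rho(\gamma(t^{-2}))$ is exactly your rescaled orbit map applied to the $K$-fixed vector $\id_V\in\End(V)$. Your verification that the map extends over $t=0$, that $K_t$ stabilizes $f(t)$ for all $t$, and that a faithful (or embedding-producing) family handles the fibers over $t\neq 0$, all matches the paper. (Minor caveat: your appeal to Matsushima to get the embedding for $t\neq 0$ is avoidable and avoided in the paper, which just takes a faithful $V$ and notes that the stabilizer equation $\rho(\theta(g))=\rho(\gamma(t^{-2})g\gamma(t^2))$ then pins down $K_t$ exactly; this also sidesteps any characteristic issues.)

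The genuine gap is the step you defer: the upper bound $\bigcap_i\on{Stab}_G(v_{i,0})\subset MN$ up to finite index. This is the mathematical content of the whole appendix, and "their detailed combinatorial construction should transfer" does not engage with it. For an arbitrary $K$-fixed vector $v$ there is no reason the stabilizer of its lowest $\gamma$-weight component $v_0$ should be close to $MN$, and no reason that adjoining more representations shrinks the joint stabilizer down to $MN$ unless you exhibit specific representations where this can be checked. The paper does this in two concrete steps. First, taking $V$ irreducible with lowest weight $\lambda$ satisfying $\langle\alpha^\vee,\lambda\rangle\neq 0$ for every coroot $\alpha^\vee$, it shows $\on{Stab}_{G(k)}(f(0))\subset P(k)$; the key input is a lemma that the stabilizer of the subspace $V^{\gamma\ge n}$ is exactly $\theta(P)(k)$, proved by a root-subgroup/$SL_2$-triple argument that uses the regularity hypothesis on $\lambda$. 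Second, taking anti-dominant weights $\lambda_1,\dots,\lambda_r$ whose joint kernel in $T$ is finite, it shows $\on{Stab}_{G(k)}(f(0))\cap A(k)$ is finite; combined with $MN(k)\subset\on{Stab}_{G(k)}(f(0))\subset P(k)=M(k)A(k)N(k)$ this yields the finite-index containment. Neither step is a formal transfer of the group case: both require choices adapted to the $\theta$-split parabolic $P$ and the $\theta$-split torus $A$. Until you supply these (or an equivalent upper bound on the special-fiber stabilizer), the proof is incomplete precisely at its crux.
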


This last claim is unfolded to the following claim:

\begin{claim}\label{ClaimExistsGSpace}

There exists an affine scheme $Y$, with an right action of $G$, and a morphism $f : \bA^1 \to Y$, such that for every $t \in \bA^1(k)$, the stabilizer $Stab_{G(k)} ( f(t) )$ contains $K_t (k)$ as a subgroup of finite index.

\end{claim}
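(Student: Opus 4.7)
The plan is to construct $Y$ as a $G$-representation $V$ and $f$ as a polynomial curve in $V$, adapting the strategy of \cite[Appendix C]{DG} from the group case to the symmetric pair case.

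Since $G/K$ is affine (by Matsushima's theorem in characteristic zero, or more directly since the orbit map $gK \mapsto g\theta(g)^{-1}$ exhibits $G/K$ as a closed $G$-invariant subvariety of $G$ for the $\theta$-twisted action), there exists a $G$-representation $V$ and a $K$-fixed vector $v \in V^K$ with $\mathrm{Stab}_G(v) = K$. I decompose $V = \bigoplus_\lambda V^{(\lambda)}$ into $G$-isotypic components and correspondingly $v = \sum_\lambda v_\lambda$, each $v_\lambda \in V^{(\lambda)}$ still $K$-fixed. The $T$-weights $\mu$ of $V^{(\lambda)}$ have the form $\mu = \lambda - \sum n_i \alpha_i$ with $n_i \in \bZ_{\geq 0}$ and $\alpha_i$ simple positive roots. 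Each simple positive root is either a root of $L$ (on which $\gamma$ vanishes, since $\gamma \in Z(L)$) or a root of $N$ (on which $\gamma$ is negative by hypothesis), so $\alpha_i(\gamma) \leq 0$ and $\mu(\gamma) \geq \lambda(\gamma)$. Hence $t^{-\lambda(\gamma)} \gamma(t) v_\lambda \in V^{(\lambda)}$ is polynomial in $t$, and I set $Y := V$ and $f : \bA^1 \to V$ by
\[
f(t) := \sum_\lambda t^{-\lambda(\gamma)} \gamma(t) v_\lambda.
\]

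For $t \in \bG_m$, since $G$ preserves each isotypic, a direct calculation gives $\mathrm{Stab}_G(f(t)) = \gamma(t)\bigl(\bigcap_\lambda \mathrm{Stab}_G(v_\lambda)\bigr)\gamma(t)^{-1} = \gamma(t) K \gamma(t)^{-1} = K_t$. At $t = 0$, $f(0) = \sum_\lambda \tilde v_\lambda$, where $\tilde v_\lambda$ is the projection of $v_\lambda$ onto the maximal-$\gamma$-weight subspace $W_\lambda := (V^{(\lambda)})^{\gamma = \lambda(\gamma)}$. The Lie algebra $\fn$ annihilates $W_\lambda$ --- a root vector of $\fn$ applied to $W_\lambda$ would produce a vector of strictly smaller $\gamma$-weight, impossible since $\lambda(\gamma)$ is the minimum $\gamma$-weight on $V^{(\lambda)}$ --- so $N \subset \mathrm{Stab}_G(f(0))$. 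Since $M = Z_K(A)$ centralizes $\gamma$ and fixes $v_\lambda$, it preserves the $\gamma$-grading and fixes each $\tilde v_\lambda$, giving $M \subset \mathrm{Stab}_G(f(0))$. Thus $K_0 = MN \subset \mathrm{Stab}_G(f(0))$.

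The main obstacle is to check that the inclusion $K_0 \subset \mathrm{Stab}_G(f(0))$ has finite index. I verify this at the Lie algebra level via the decomposition $\fg = \overline{\fn} \oplus \fa \oplus \fm \oplus \fn$. The $\fm$- and $\fn$-components of $X \in \fg$ already kill $f(0)$, while a weight analysis shows that the $\fa$- and $\overline{\fn}$-contributions to $X \cdot f(0)$ lie in distinct $\gamma$-weight parts of each $V^{(\lambda)}$ and hence must vanish separately. Vanishing of the $\fa$-part forces $X_\fa$ into the common kernel of the characters $\mu|_A$ appearing in the various $\tilde v_\lambda$, which is finite provided $V$ contains enough $K$-spherical isotypics so that these characters span $\Hom(A, \bG_m) \otimes \bbQ$; this is guaranteed by the standard fact that the spherical weight monoid for $(G,K)$ has maximal rank $\dim A$. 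Vanishing of the $\overline{\fn}$-part is arranged by including isotypics of sufficiently regular highest weight so that root vectors of $\overline{\fn}$ act nontrivially on the $\tilde v_\lambda$. Such an enlargement of $V$ --- replacing the matrix-coefficient construction of \cite{DG}'s group case by an appeal to sphericity of representations --- is the essential new ingredient for the symmetric pair setting.
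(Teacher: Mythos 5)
Your construction is genuinely different from the paper's. The paper takes $Y = \mathrm{End}(V)$ with the $\theta$-twisted two-sided action $T\cdot g = \rho(\theta(g))^{-1}\circ T\circ\rho(g)$ and $f(t) = t^{2n}\rho(\gamma(t^{-2}))$; the point of that choice is that $K = G^{\theta}$ automatically stabilizes $f(1)=\mathrm{id}_V$ for \emph{any} representation $V$, so one is free to take a product of: a faithful $V$ (pinning down $\mathrm{Stab}(f(t))=K_t$ for $t\neq 0$), an irreducible $V$ with regular lowest weight (forcing $\mathrm{Stab}(f(0))\subset P$, via the group-level lemma that the stabilizer of the subspace $V^{\gamma\ge n}$ is exactly $\theta(P)$), and irreducibles with anti-dominant weights of finite joint kernel (cutting $\mathrm{Stab}(f(0))\cap A$ down to a finite group). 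Your version, with $Y=V$ and $f$ built from a $K$-fixed vector, forces every isotypic component you use to be spherical, and that is where the argument breaks.

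The gap is in the upper bound on $\mathrm{Stab}(f(0))$. You propose to kill the $\overline{\fn}$-part of its Lie algebra by ``including isotypics of sufficiently regular highest weight'', but spherical highest weights are never regular unless $M$ is a torus: the highest weight $\lambda$ of an irreducible with a $K$-fixed vector is trivial on $\ft\cap\fk$, hence satisfies $\langle\lambda,\alpha^{\vee}\rangle=0$ for every root $\alpha$ of $M$, whose coroot lies in $\ft\cap\fk$. The most you can ask for is regularity with respect to the \emph{restricted} root system, and then you still owe two nontrivial facts that the sketch does not supply: that $\tilde v_\lambda\neq 0$ and, more precisely, that its component along the highest-weight line is nonzero (a Cartan--Helgason-type input, which together with your use of isotypic decompositions silently restricts you to characteristic $0$, whereas the paper's claim and proof are characteristic-free); and that no nonzero $\sum c_\alpha e_{-\alpha}\in\overline{\fn}$ annihilates all the $\tilde v_\lambda$, which does not follow weight-by-weight because distinct terms $e_{-\alpha}(\tilde v_\lambda)_\mu$ and $e_{-\beta}(\tilde v_\lambda)_\nu$ can land in the same $T$-weight space and cancel. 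The parts you do carry out are correct: $\mathrm{Stab}(f(t))=K_t$ for $t\neq 0$, $MN\subset \mathrm{Stab}(f(0))$, and the reduction of finite index to a Lie-algebra computation (an algebraic stabilizer has finitely many components). But as written the key step --- bounding $\mathrm{Lie}(\mathrm{Stab}(f(0)))$ by $\fm\oplus\fn$ --- is asserted rather than proved, and the specific mechanism you invoke for it is unavailable in the spherical setting you have confined yourself to.
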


\subsection{Proof of claim \ref{ClaimExistsGSpace}}

To construct $Y$, we will consider finite-dimensional representations $V$ of $G$ (write $\rho : G \to Aut(V) \subset End(V)$ for the corresponding homomorphism). We then set $Y := End(V)$, and the right $G$-action on $Y$ we set to be $ T \cdot g :=  \rho(\theta(g)^{-1}) \circ T \circ \rho(g)$.  We also choose an integer $n$, and consider the morphism $f : \bG_m \to Y$ defined by $f(t) = t^{2n} \rho (\gamma(t^{-2}))$. If $n$ is big enough, it extends to a morphism $f: \bA^1 \to Y$. More precisely, $n$ should be not less than $ \langle \gamma , \omega \rangle$, for all $A$-weights $\omega$ of $V$ (and we assume that $n$ satisfies this in what follows).

Let us first find the general shape of the desired stabilizers for this construction, and check that $K_t (k) \subset Stab_{G(k)}(f(t))$.

For $\bA^1(k) \ni t \neq 0$, $Stab_{G(k)} (f(t))$ is the subgroup of $G(k)$ contsisting of elements $g$ satisfying

\begin{equation}
\label{eq:neq0} \rho(\theta(g)) = \rho(\gamma(t^{-2}) g \gamma(t^{2})) .
\end{equation}

Since $K_t (k) = \{ g \in G(k) \ | \ \theta(g) = \gamma(t^{-2}) g \gamma(t^{2}) \}$, it is clear that $K_t (k) \subset Stab_{G(k)} (f(t))$.

For $t = 0$, notice that $f(0)$ is a projection operator, with kernel $V^{\gamma < n}$ and image $V^{\gamma \ge n} = V^{\gamma = n}$ (where $V^{\gamma < n}$, for example, stands for the sum of all $A$-weight subspaces with weight $\omega$ satisfying $\langle \gamma , \omega \rangle < n$). Then $Stab_{G(k)} (f(0))$ is seen to be the subgroup of $G(k)$ consisting of elements $g$ satisfying the following three properties:

\begin{equation}\label{eq:eq0}
\rho(g) V^{\gamma < n} \subset V^{\gamma < n}, \rho(\theta(g)) V^{\gamma \ge n} \subset V^{\gamma \ge n}  , ( \rho(g) - \rho(\theta(g)) ) V^{\gamma \ge n} \subset V^{\gamma < n}.
\end{equation}

From this, it is clear that $K_0(k) = N(k) M(k) \subset Stab_{G(k)} (f(0))$.

\subsubsection{}

Taking the product of the various $G$-varieties $Y$ and morphisms $f:\bA^1 \to Y$ that we will study in paragraphs \ref{subsubsec:c1}, \ref{subsubsec:c2} and \ref{subsubsec:c3}, we get a $G$-variety $Y$ and morphism $f:\bA^1 \to Y$ which satisfy the wanted condition on stabilizers.

\subsubsection{}\label{subsubsec:c1}

Assume that $V$ is a faithful $G$-representation, and take any big enough $n$.

Then equation \ref{eq:neq0} becomes just $\theta(g) = \gamma(t^{-2}) g \gamma(t^{2})$, so it is clear that $Stab_{G(k)} (f(t)) = K_t (k)$, whenever $t \neq 0$.

\subsubsection{}\label{subsubsec:c2}

Assume that $V$ is an irreducible $G$-representation with lowest weight $\lambda$, such that $\langle \alpha^{\vee} , \lambda \rangle \neq 0 $ for all co-roots $\alpha^{\vee}$. Set $n = \langle \gamma , \lambda \rangle$.

By the lemma below, the condition $\rho(\theta(g)) V^{\gamma \ge n} \subset V^{\gamma \ge n}$ which appears in \ref{eq:eq0} implies $\theta(g) \in \theta(P)(k)$. Thus, we get $g \in P(k)$. So, for this $Y$, $Stab_{G(k)} (f(0))$ is contained in $P(k)$.

\begin{lemma}

The stabilizer of $V^{\gamma \ge n}$ in $G(k)$ is $\theta(P)(k)$.

\end{lemma}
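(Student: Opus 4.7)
The plan is to identify $V^{\gamma\ge n}$ explicitly as an $L$-subrepresentation of $V$, show that it is stabilized by $\theta(P)$, and then rule out any strict enlargement using the regularity of $\lambda$.

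First I would check that $V^{\gamma\ge n}$ coincides with the irreducible $L$-subrepresentation $V^L_\lambda\subset V$ with lowest weight $\lambda$. Every weight of $V$ has the form $\omega=\lambda+\sum c_i\alpha_i$ with $c_i\ge 0$ and $\alpha_i$ simple positive roots of $B$; since $\gamma$ lies in the centre of $\mathfrak{l}$ it pairs to zero with roots of $L$, and by hypothesis pairs strictly negatively with roots of $N$, so $\langle\gamma,\omega\rangle\le n$ with equality exactly when no simple root of $N$ appears in the expansion. The weights achieving equality are precisely the $L$-weights of $V^L_\lambda$, and in particular $V^{\gamma\ge n}=V^{\gamma=n}=V^L_\lambda$.

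Next I would verify the easy inclusion $\theta(P)\subset\mathrm{Stab}_G(V^{\gamma\ge n})$. Since $\theta$ inverts $A$, conjugation by $\theta$ sends $\mathfrak{g}_\alpha$ to $\mathfrak{g}_{-\alpha}$ (for the $A$-grading), so the root spaces of $\bar{\mathfrak{n}}:=\theta(\mathfrak{n})$ all have strictly positive $\gamma$-value; hence $\bar{\mathfrak{n}}$ sends $V^{\gamma=n}$ into $V^{\gamma>n}=0$, while $L$ visibly preserves the $\gamma$-grading. Writing $Q:=\mathrm{Stab}_G(V^{\gamma\ge n})$, the quotient $G/Q$ is a quotient of the complete variety $G/\theta(P)$, hence itself complete, so $Q$ is a parabolic of $G$. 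Parabolics strictly containing $\theta(P)$ are obtained by adjoining to $L$ a non-empty set of simple roots of $N$, so any such $Q\supsetneq\theta(P)$ would contain a root subgroup $U_\beta$ for some root $\beta$ of $N$.

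Finally, and this is the only place where the assumption on $\lambda$ matters, I would exhibit $X_\beta\in\mathfrak{g}_\beta$ that violates the stabilization. The hypothesis $\langle\alpha^\vee,\lambda\rangle\neq 0$ for every coroot $\alpha^\vee$, combined with $\lambda$ being $B$-lowest (so $\langle\alpha^\vee,\lambda\rangle\le 0$ for every positive coroot), forces $\langle\beta^\vee,\lambda\rangle<0$. Thus $v_\lambda$ is a nontrivial lowest weight vector for the $\mathfrak{sl}_2$-triple attached to $\beta$, so $X_\beta v_\lambda\neq 0$. But this non-zero vector lies in $V_{\lambda+\beta}$, whose $\gamma$-value $n+\langle\gamma,\beta\rangle$ is strictly less than $n$, so it does not lie in $V^{\gamma\ge n}$. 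This contradicts $U_\beta\subset Q$, forcing $Q=\theta(P)$. The main subtle point is exactly the non-vanishing $X_\beta v_\lambda\neq 0$ for every root $\beta$ of $N$ (not only the simple ones), and it is for this that the full regularity of $\lambda$, rather than merely its dominance or anti-dominance, is required.
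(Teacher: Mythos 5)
Your argument is correct and follows essentially the same route as the paper's: one shows the stabilizer $Q$ is a parabolic containing $\theta(P)$, and then uses the regularity of $\lambda$ to exclude any root subgroup in the $N$-direction acting on the lowest weight vector. The only cosmetic difference is in the last step: the paper observes that a strictly larger $Q$ would contain both $u_{\alpha}$ and $u_{1/\alpha}$ for some root $\alpha$ of $\theta(N)$, deduces that the coroot fixes $v$ and hence $\langle \alpha^{\vee},\lambda\rangle =0$, whereas you run the same $\mathfrak{sl}_2$ computation in the contrapositive direction by showing directly that $X_{\beta}v_{\lambda}\neq 0$ for a root $\beta$ of $N$ --- the two are equivalent.
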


\begin{proof}

Denote by $Q$ the stabilizer of $V^{\gamma \ge n}$ in $G(k)$. Notice that $\theta(P)(k) \subset Q$, and we would like to show that $\theta(P)(k) = Q$. As $Q$ contains a parabolic, it is a parabolic itself. Suppose that $\theta(P)(k) \neq Q$. We would then find a root $\alpha$ of $\theta(N)$ (i.e. a root of $\theta(P)$ and not of $M$) such that $\alpha$ and $1 / \alpha$ are roots of $Q$. Denote by $u_{\alpha} , u_{1 / \alpha} : \bG_a \to Q$ the corresponding root subgroups. Denote by $v$ a the lowest weight vector in $V$. Since $\gamma$ satisfies $\langle \gamma , \alpha \rangle > 0$, $v - u_{1 / \alpha}(x)v$ lies in $V^{\gamma < n}$, which means, by the definition of $Q$, that $v - u_{1 / \alpha}(x)v = 0$, i.e. $u_{1 / \alpha}(x)$ fixes $v$. But then, since $u_{\alpha}(x)$ also fixes $v$ (since $v$ is a lowest weight vector), considering the relevant $SL_2$-triple, we see that the co-root subgroup $\alpha^{\vee} : \bG_m \to T$ fixes $v$, i.e. $\langle \alpha^{\vee} , \lambda \rangle = 0$. But this contradicts our assumption on $\lambda$.

\end{proof}

\subsubsection{}\label{subsubsec:c3}

Let $\lambda : T \to \bG_m$ be an anti-dominant root, and $V_{\lambda}$ an irreducible representation of $G$ with lowest weight $\lambda$. Set $n = \langle \gamma , \lambda \rangle$ and consider the corresponding $Y_{\lambda} := End(V_{\lambda})$. Recalling that $\theta(a) = a^{-1}$ for $a \in A(k)$ and explicating the last condition of \ref{eq:eq0} we see that $Stab_{G(k)} (f(0)) \cap A(k)$ is contained in $\{ a \in A(k) \ | \ \lambda(a)^2=1 \}$. By the next lemma, taking $Y := Y_{\lambda_1} \times \cdots \times Y_{\lambda_r}$, we will have that $Stab_{G(k)} (f(0)) \cap A(k)$ is finite.

\begin{lemma}

There exist finitely many anti-dominant $\lambda_1 , \ldots , \lambda_k \in X^*T$ such that their joint kernel in $T$ is finite.

\end{lemma}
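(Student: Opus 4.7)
My plan is to exhibit a single strictly anti-dominant character and then perturb a $\mathbb{Z}$-basis of $X^*T$ by a large multiple of it to land inside the anti-dominant cone, without losing the ability to generate $X^*T$.

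First I would produce $\mu_0 \in X^*T$ with $\langle \alpha^\vee, \mu_0 \rangle < 0$ for every positive root $\alpha$. Since every positive coroot is a non-negative integer combination of the simple coroots, it is enough to arrange $\langle \alpha_i^\vee, \mu_0 \rangle < 0$ for each simple root $\alpha_i$. The simple coroots are linearly independent in $X_*T \otimes \mathbb{Q}$, so the evaluation pairing gives a surjection $X^*T \otimes \mathbb{Q} \twoheadrightarrow \mathbb{Q}^{\#\text{simple roots}}$; pick any preimage of $(-1, \ldots, -1)$ and clear denominators to obtain such a $\mu_0$ in the lattice $X^*T$ itself.

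Next I would choose a $\mathbb{Z}$-basis $e_1, \ldots, e_r$ of $X^*T$, and set $\lambda_0 := \mu_0$ and $\lambda_i := e_i + N \mu_0$. For each positive root $\alpha$ and each $i$, the pairing $\langle \alpha^\vee, e_i + N \mu_0 \rangle = \langle \alpha^\vee, e_i \rangle + N \langle \alpha^\vee, \mu_0 \rangle$ tends to $-\infty$ as $N \to \infty$; since there are only finitely many positive roots and finitely many basis vectors, a single $N$ makes all of $\lambda_1, \ldots, \lambda_r$ (and obviously $\lambda_0$) anti-dominant simultaneously.

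Finally, I would note that $e_i = \lambda_i - N \lambda_0$, so the $\mathbb{Z}$-submodule generated by $\lambda_0, \lambda_1, \ldots, \lambda_r$ is all of $X^*T$. Hence the joint kernel $\bigcap_i \ker \lambda_i \subset T$ is the kernel of the map $T \to \bbG_m^{r+1}$ induced by a generating set of the full character lattice; this kernel is trivial (in particular finite), finishing the proof. The only step requiring a moment of thought is the very first one, the existence of $\mu_0$ in the lattice rather than just in $X^*T \otimes \mathbb{Q}$, and this is handled by the linear independence of simple coroots together with clearing denominators.
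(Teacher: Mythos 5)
Your proof is correct. The paper actually states this lemma without any proof (it is the last item of Appendix A and is left to the reader), so there is no argument to compare against; your write-up supplies a complete and standard justification. The two points that need checking both hold: the simple coroots are linearly independent in $X_*T\otimes\mathbb{Q}$, so a strictly anti-dominant $\mu_0$ exists in the lattice after clearing denominators (and strict negativity on simple coroots propagates to all positive coroots since these are non-negative, not-all-zero integer combinations of simple ones); and translating a $\mathbb{Z}$-basis by a large multiple of $\mu_0$ keeps a generating set, so the joint kernel is even trivial, which is more than the finiteness the lemma asks for and is exactly what the application in \S A.3 needs.
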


\quash{

\section{Monodromic sheaves}\label{Monodromic sheaves}
In this appendix we
extend the results in the main text to monodromic sheaves.
Let $Y$ be a smooth variety over $\bC$.
Let $p:X\ra Y$ be a principal $T$-bundle.
The Lie algebra $\ft$ acts on
$X$ via the exponential map $\ft\ra T$. We define the category of monodromic sheaves on $X$ as
$D(X)_{mon} := D_\ft(X)$. Since $\ft$ is contractible, $D(X)_{mon}$ is a full subcategory of
$D(X)$.

\subsection{}
We have the following generalization of theorem \ref{TransRigid} to constant limit schemes associated to "essentially proper" monodromic varieties.

\begin{thm}\label{mon version}
Let $p:X\ra Y$ be a principal $T$-bundle over a smooth proper variety $Y$.
Assume that both $X$ and $Y$ are
$\bG_m$-varieties
and that $p$ is $\bG_m$-equivariant.
Consider the constant limit scheme $f: \widetilde{X}=\bA^1\times X \to \bA^1$. Let $\calF,\calG \in D(X)_{mon}$ and suppose that $\calF$ and $\bbD \calG$ are limit-transversal.
Then the natural morphism $Hom (\calF,\calG) \to Hom(L_f \calF, L_f \calG)$ is an isomorphism.
\end{thm}
\begin{proof}
Note that in the proof of theorem \ref{TransRigid}, the only place were we used the assumption that $X$ is proper is in lemma \ref{Lem:LimitProper}.
So it suffices to extend lemma \ref{Lem:LimitProper} to our setting; that is, we want to show that the map $Hom(C_{X} , \cdot) \to Hom(L_f C_{X} , L_f(\cdot))$ is an isomorphism on $D(X)_{mon}$. As shown in the proof of lemma \ref{Lem:LimitProper}, this map is induced by the
base change morphism $L_g\pi_*\ra \pi_*L_f$, where we regard $\pi:=f:\tX\ra\bA^1$ as a map from
$\tX$ to the limit scheme $g:\bA^1\ra \bA^1$.
So we have to show that $L_g\pi_*\ra \pi_*L_f$
is an isomorphism on $D(X)_{mon}$.

Since $\pi$ factors as
$\tX\stackrel{\tilde p}\ra\tY=Y\times\bA^1\stackrel{f'}\ra\bA^1$, the map $L_g\pi_*\ra \pi_*L_f$ can be expressed as
\[L_g\pi_*=L_g\pi'_*\tilde p_*\ra\pi'_*L_{f'}\tilde p_*\ra \pi'_*\tilde p_*L_f=\pi_*L_f,\]
where $\pi':=f':\tY\ra\bA^1$ is viewed as a map between limit schemes.
Since $Y$ is proper, so is $\pi'$, and this implies that the first arrow is an isomorphism.  So we are reduced to
show that
$L_{f'}\tilde p_*(\mM)\ra\tilde p_*L_f(\mM)$ is an isomorphism for $\mM\in D(X)_{mon}$.
This is a local statement, so we can assume that $X=Y\times T$
and $\tilde p:Y\times T\times\bA^1\ra Y\times\bA^1$ is the projection map. Since
objects in $D(X)_{mon}=D(Y\times T)_{mon}$ can be obtained from objects of the form
$\mN_1\boxtimes\mN_2$ by successive extensions, we can further assume $\mM=\mM_1\boxtimes\mM_2$.
But then we have
$\iota_1:L_{f'}\tilde p_*(\mM_1\boxtimes\mM_2)\is L_{f'}(\mM_1)\otimes_{\bC}\Gamma(\mM_2)$,
$\iota_2:\tilde p_*L_f(\mM_1\boxtimes\mM_2)\is\tilde p_*(L_{f'}(\mM_1)\boxtimes\mM_2)\is L_{f'}(\mM_1)\otimes_{\bC}\Gamma(\mM_2)$
and with a little work one can check that under those isomorphisms the map $L_{f'}\tilde p_*(\mM)\ra\tilde p_*L_f(\mM)$
becomes the identity map on $ L_{f'}(\mM_1)\otimes_{\bC}\Gamma(\mM_2)$.
We are done.
\quash{
\[L_{f'}\tilde p_*(\mM_1\boxtimes\mM_2)\stackrel{\iota_1}\is L_{f'}(\mM_1)\otimes_{\bC}\Gamma(\mM_2)\stackrel{\iota_2}\is\tilde p_*L_f(\mM_1\boxtimes\mM_2).\]}

\end{proof}

Let $\mtB=G/U$ be the base affine space.
The projection $p:\mtB\ra\mB$ realizes $\mtB$ as a
principal $T$-bundle over the flag variety.
Now using the above monodromic generalizations of theorem \ref{TransRigid},
all the arguments in \S\ref{Matsuki setting} extend verbatim to monodromic sheaves and
we obtain:

\begin{thm}\label{monodromy version}

We have an isomorphism of functors $D_K(\mtB)_{mon} \to D_{MN}(\mtB)_{mon}$:
$$ L_{\gamma} \cong\av^{M\bar N}_{MN} \circ\Av^K_{M\bar N}.$$

\end{thm}

\quash{
\subsection{}
We shall extend theorem \ref{formula for L} to
admissible sheaves with unipotent monodromy.

Let $\mY=(G/U \times G/U)/T$ where $T$ acts on $G/U \times G/U$ via the diagonal action.
The projection $\mY\ra\mB^2$ realizes $\mY$ as
a principal $T$-bundle.  We define:

\begin{definition}\label{admis sheaves mon}

An object $\mM\in D_K(G)$ is called \emph{admissible} with \emph{unipotent monodromy}, if all the irreducible consistuents of its perverse cohomologies appear as irreducible consistuents of perverse cohomologies of objects of the form $CH(\mF) \in D_K(G)$, where $\mF\in D_{K}(\mY)_{um}$. We denote by $D\mA_K(G)_{um} \subset D_K(G)$ the full subcategory consisting of admissible objects with unipotent monodromy.
\end{definition}

By the same argument as in theorem \ref{relative version},
we have the following "relative" version of theorem \ref{mon version}.

\begin{thm}\label{relative mon version}
Let $p:X\ra Y$ be a principal $T$-bundle over a smooth proper variety $Y$.
Assume that both $X$ and $Y$ are
$\bG_m$-varieties, equipped with an action of a group $M$, such that the $\bG_m$-action
and the $M$-action commute. Assume that $p$ is $\bG_m$- and $M$-equivariant.
Let $Z$ be another smooth variety. Regard $X\times Z$ as a $\bG_m$- and $M$-variety where $\bG_m$ and $M$ act only on $X$.
Let $\calF,\calG \in D_{M}(X\times Z)_{mon}$ and suppose that
$\calF$ and $\bbD \calG$ are
are "limit-transversal w.r.t. $X$"; that is, there exist stratifications
$S_1,S_2$ of $X$ and $R_1,R_2$ of $Z$, such that
$\calF$ and $\bbD \calG$ are constructible w.r.t.
$S_1 \times R_1$ and $S_2 \times R_2$ respectively, and $S_1$ is limit-transversal to $S_2$.
Then the natural morphism
$Hom_{D_{M}(X\times Z)} (\calF,\calG) \to Hom_{D_{M}(X\times Z)}(L_f \calF, L_f \calG)$ is an isomorphism.
\end{thm}

Now using the above monodromic generalization of theorem \ref{relative version},
the argument in theorem \ref{formula for L} extends verbatim to admissible sheaves with unipotent monodromy and
we obtain:

\begin{thm}\label{formula for L mon}

We have an isomorphism of functors $D\mA_K(G)_{um} \ra D_{MN}(G)$:
\[L_{\gamma} \is av_{MN}^{M\bar N}\circ Av_{M\bar N}^K.\]

\end{thm}
}

\section{Vinberg's semigroup}\label{Vinberg's semigroup}

In this appendix we briefly explain, in the group case $(G = H \times H , K=H)$,
how the family of groups $\tilde K$ and the wonderful degeneration $\tX=\tK\backslash\tG$
of section \S\ref{wonderful deg} can be obtained using Vinberg's semigroup.

The main reference for this appendix is \cite{DG}.

Let $H$ be a reductive group.
We fix a Borel subgroup $B_H$
and a maximal torus $T_H\subset B_H$.
We denote by $Z_H$ the center of $H$ and define $T_{H,adj}:=T_H/Z_H$.
We denote by $N_H$ (resp. $\bar N_H$) the unipotent radical
of $B_H$ (resp. opposite of $B_H$).

Consider the group $H_{enh}:=(H\times T_H)/Z_H$, where $Z_H$ acts diagonally. We have canonical morphism of algebraic groups
\[\pi:H_{enh}\ra T_{H,adj}.\]

Vinberg's semigroup, denoted by $\bar H_{enh}$, is an affine algebraic monoid
containing $H_{enh}$ as its group of invertible elements.
The set of simple roots of $H$ (determined by the pair $(T_H,B_H)$) defines an isomorphism
$T_{H,adj}\is\bG_m^r$. Let
$\bar T_{H,adj}=\bA^r$. Then the open embedding $\bG_m\ra \bA^1$ induces
an embedding $T_{H,adj}\ra\bar T_{H,adj} $. The map $\pi$ extends
to a homomorphism of algebraic monoids
\[\bar\pi:\bar H_{enh}\ra\bar T_{H,adj}. \]

Let $\bar H_{enh}^0\subset \bar H_{enh}$ be the \emph{non-degenerate} locus
of $\bar H_{enh}$ defined in \cite[definition D.4.3]{DG}. It is shown in \emph{loc. cit.} that $\bar H_{enh}^0$ is an open, $\bar H_{enh}\times
\bar H_{enh}$-invariant sub-scheme of $\bar H_{enh}$ and the restriction of $\bar\pi$
to $\bar H_{enh}^0$, denoted by
$\bar\pi^0:\bar H_{enh}^0\ra\bar T_{H,adj}$,
is smooth and surjective. Moreover, it is known that
the center $Z_H$ acts freely on $\bar H_{enh}^0$ and the quotient
$\bar H_{enh}^0/Z_H$ is the \emph{wonderful compactification} of
$H_{adj}:=H/Z_H$.

Consider the map $s:T_{H,adj}\ra H_{enh}=(H\times T_H)/Z_H$,
$h\ra (h^{-1},h)\mod Z_H$. It is shown in \cite[\S D5]{DG} that
the map $s$ extends to a homomorphism of algebraic monoids
$\bar s:\bar T_{H,adj}\ra\bar H_{enh}^0\subset \bar H_{enh}$ and
the action map $\bar T_{H,adj}\times H\times H\ra\bar H_{enh}^0$,
$(t,h_1,h_2)\ra h_1\bar s(t)h_2^{-1}$ induces an isomorphism
\beq
(\bar T_{H,adj}\times H\times H)/\on{Stab}_{H\times H}(\bar s)\is\bar H_{enh}^0.
\eeq
Here \[\on{Stab}_{H\times H}(\bar s)\ra\bar T_{H,adj}\] is the group scheme over $\bar T_{H,adj}$
whose fiber over $t\in \bar T_{H,adj}$ is the
stabilizer of $\bar s(t)$ in $H\times H$.

Let $\lambda:\bG_m\ra T_{H}$ be a dominant and regular co-character
w.r.t. $B_H$. Then the composition
$\bG_m\stackrel{\lambda}\ra T_{H}\ra T_{H,adj}$ extends to a morphism $\bar\lambda:\bA^1\ra\bar T_{adj,H}$. Let $\gamma=2\lambda:\bG_m\ra T_H$ and
$\bar\gamma:\bA^1\ra\bar T_{H,adj}$ be the corresponding extensions.
Let \[\on{Stab}_{H\times H}(\bar\gamma)\ra \bA^1\] be the base change
of the group scheme
$\on{Stab}_{H\times H}(\bar s)\ra\bar T_{H,adj}$ along $\bar\gamma$.
Similarly, let $\bar H_{enh,\gamma}^0$ be the following base change
\[\xymatrix{\bar H_{enh,\gamma}^0\ar[r]\ar[d]^{f_\gamma}&\bar H_{enh,\gamma}^0\ar[d]^{\bar\pi^0}\\
\bA^1\ar[r]^{\bar\gamma}&\bar T_{H,adj}}.\]

Now we are ready to relate the group scheme $\widetilde K$ with Vinberg's semigroup:
consider the symmetric pair $(K=H, G=H\times H)$. Let
$A=\{(t,t^{-1})|h\in T_H\}\subset G=H\times H$ be a split torus.
Regard $\lambda:\bG_m\ra T_{H}\is A$ as a co-character of $A$ via
the isomorphism $T_{H}\is A$, $t\ra (t,t^{-1})$. We have the following
\begin{prop}\label{Vinberg vs family}
\
\begin{enumerate}
\item
The group scheme $\widetilde K$ defined in \S\ref{wonderful deg} (using the co-character
$\lambda$) is equal to
$\on{Stab}_{H\times H}(\bar\gamma)$.
\item
Let $\tX=\tK\backslash\tG$ be the wonderful degeneration. There is an isomorphism of limit schemes
\[\tX\is\bar H_{enh,\gamma}^0.\]
\end{enumerate}
\end{prop}

}



\begin{thebibliography}{99}

\bibitem[AM1]{AM1} N. Abe, Y. Mieda: {\it Remarks on geometric Jacquet functors},
J. Math. Sci. Univ. Tokyo, 17 (2010), 243-246.


\bibitem[AM2]{AM2} N. Abe, Y. Mieda: {\it Jacquet functor and De Concini-Procesi compatification},
Int Math Res Notices (2015) Vol. 2015 3810-3829.


\bibitem[An]{An} S. Anantharaman: {\it Sch\'emas en groupes, espaces homog\`enes et espaces alg\'ebriques sur une base de dimension 1},
Bull. Soc. Math. France, M\'emoire 33 (1973), 5–79.

\quash{
\bibitem[AB]{AB} S. Arkhipov, R. Bezrukavnikov: {\it Perverse sheaves on affine flags and Langlands dual group},
Israel Journal of Mathematics, 170 (2009), 135-183.}

\quash{\bibitem[Ay]{Ay} J. Ayoub: {\it The motivic nearby cycles and the conservation conjecture},
Algebraic Cycles and Motives, Vol 1, 3-54, London Math. Soc. Lecture Note Ser., 343 (2007), Cambridge Univ. Press.}

\bibitem[B]{B} T. Braden: {\it Hyperbolic localization of Intersection Cohomology}, Transformation Groups 8 (2003), no. 3, 209–216. Also arXiv:math/020225


\bibitem[BB1]{BB1} A. Beilinson, J. Bernstein: {\it A generalization of Casselman's submodule theorem},
Representation theory of reductive groups, 35-52, Progr. Math. 40, Birkh$\ddot{\on{a}}$user Boston, MA, 1983.

\bibitem[BB2]{BB2} A. Beilinson, J. Bernstein: {\it A proof of Jantzen Conjectures},
 I. M. Gelfand Seminar, 1�1�750, Adv. Soviet Math., 16, Part 1, Amer. Math. Soc., Providence, RI, 1993.

\bibitem[BFO]{BFO} R. Bezrukavnikov, M. Finkelberg, V. Ostrik: {\it Character D-modules via Drinfeld center
of Harish-Chandra bimodles},
Invent. Math. 188 (2012), 589-620.

\bibitem[BL]{BL} J. Bernstein, V. Lunts: {\it Equivariant sheaves and functors},
Lecture Notes in Math. 1578 (1994).

\bibitem[BK]{BK} R. Bezrukavnikov, D. Kazhdan: {\it Geometry of second adjointness for $p$-adic group},
arXiv:1112.6340v2.



\quash{
\bibitem[CG]{CG} N. Chriss, V. Ginzburg: {\it Representation theory and complex geometry},
Birkh$\ddot{\on{a}}$user Boston, Cambridge, MA, 1997.}

\bibitem[DP]{DP} C. De Concini, C. Procesi.: {\it Complete symmetric varieties},
Lecture Notes in Mathematics Volume 996, 1983, 1-44.

\quash{\bibitem[D]{D} A. Dimca: {\it Sheaves in Topology},
Universitext, Springe-Verlag, Berlin, 2004.}

\bibitem[DG1]{DG} V. Drinfeld, D. Gaitsgory: {\it Geometric constant term functor(s)},
arXiv:1311.2071.

\bibitem[DG2]{DG2} V. Drinfeld, D. Gaitsgory: {\it On a theorem of Braden},
arXiv:1308.3786.

\bibitem[ENV]{ENV} M. Emerton, D. Nadler, K. Vilonen: {\it A geometric Jacquet functor},
Duke Math. J. 125 (2004), no. 2, 267-278.

\bibitem[G]{G} V. Ginzburg: {\it Admissible modules on a symmetric space},
 Ast\'erisque, 173-174 (1989), 199-255.

\quash{ \bibitem[GD]{GD} D. Gaitsgory, D. Nadler: {\it
Spherical varieties and Langlands duality
},
Moscow Mathematical Journal,
Volume 10, Number 1, 65-137 (2010). }

\bibitem[GR]{GR} I. Grojnowski: {\it Character sheaves on symmetric spaces},
Phd Thesis, available at https://www.dpmms.cam.ac.uk/~groj/papers.html

\quash{
\bibitem[HTT]{HTT} R. Hotta, K. Takeuchi, T. Tanisaki: {\it $D$-modules, perverse sheaves, and representation theory},
Progress in Mathematics, Vol 236.}

\quash{\bibitem[K]{K} V. YU. Kaloshin: {\it A geometric proof of the existence of Whitney stratifications
}, Moscow Mathematical Journal,
Volume 5, Number 1, Pages 125-133.}

\bibitem[L]{L} G. Lusztig: {\it Character sheaves I-V},
Adv. Math. 56 (1985), 193-237; Adv. Math. 57 (1985), 226-265;
Adv. Math. 57 (1985), 266-315; Adv. Math. 59 (1986), 1-63;
Adv. Math. 61 (1986), 103-115.
\quash{
\bibitem[L2]{L2} G. Lusztig: {\it Character sheaves II},
Adv. Math. 57 (1985), 226-265.

\bibitem[L3]{L3} G. Lusztig: {\it Character sheaves III},
Adv. Math. 57 (1985), 266-315.

\bibitem[L4]{L4} G. Lusztig: {\it Character sheaves IV},
Adv. Math. 59 (1986), 1-63.

\bibitem[L5]{L5} G. Lusztig: {\it Character sheaves V},
Adv. Math. 61 (1986), 103-115.
}

\bibitem[MV]{MV} I. Mirkovi\'c, K. Vilonen: {\it Characteristic varieties of character sheaves},
Invent. math. 93, 405-418 (1988).

\bibitem[MUV]{MUV} I. Mirkovi\'c, T. Uzawa, K. Vilonen: {\it Matsuki correspondence for sheaves},
Invent. math. 109, 231-245 (1992).

\bibitem[N]{N} D. Nadler: {\it  Morse theory and tilting sheaves},
Pure App. Math. Q. 2 (2006), no. 3, (Special Issue: In honor of Robert MacPherson, Part 1 of 3), 83--108.

\bibitem[SV]{SV} Y. Sakellaridis, A. Venkatesh: {\it Periods and harmonic analysis on spherical varieties},
arXiv:1203.0039.

\quash{\bibitem[S]{S} L. Sch$\ddot{\on{u}}$rmann: {\it Topology of singular spaces and constructible sheaves},
Monografie Matematyczne Vol. 63.}


\quash{\bibitem[Vust]{Vust} T. Vust: {\it Op{\'e}ration de groupes r\'eductifs dans un type de c$\hat{o}$nes presque homog\'enes},
Bull. Soc. Math. France 102, 317-334 (1974).}

\quash{\bibitem[V]{V} E. B. Vinberg: {\it On reductive algebraic semigroups},
E. B. Dynkin's Seminar, 145-182, Amer. Math. Soc. Transl. Ser. 2, 169,
Amer. Math. Soc., Providence, RI, 1995.}

\end{thebibliography}
\end{document}